\begin{document}

\newtheorem{thm}{Theorem}[section]
\newtheorem*{thm1}{Theorem}
\newtheorem{lem}[thm]{Lemma}
\newtheorem*{lem1}{Lemma}
\newtheorem{cor}[thm]{Corollary}
\newtheorem{add}[thm]{Addendum}
\newtheorem{prop}[thm]{Proposition}
\theoremstyle{definition}
\newtheorem{defn}[thm]{Definition}
\newtheorem{claim}[thm]{Claim}
\newtheorem*{mainthm}{Theorem 1.1}
\newtheorem*{maincor}{Corollary 1.2}
\newtheorem*{prop1}{Proposition 2.2'}
\theoremstyle{remark}
\newtheorem{rmk}[thm]{Remark}
\newtheorem{ex}[thm]{Example}
\newtheorem{conj}[thm]{Conjecture}
\newtheorem{ob}[thm]{Observation}
\newcommand{\CC}{\mathbb{C}}
\newcommand{\RR}{\mathbb{R}}
\newcommand{\DD}{\mathbb{D}}

\newcommand{\ZZ}{\mathbb{Z}}
\newcommand{\QQ}{\mathbb{Q}}
\newcommand{\NN}{\mathbb{N}}
\newcommand{\FF}{\mathbb{F}}
\newcommand{\PP}{\mathbb{P}}
\newcommand{\CmodTwoPiIZ}{{\mathbf C}/2\pi i {\mathbf Z}}
\newcommand{\Cnozero}{{\mathbf C}\backslash \{0\}}
\newcommand{\Cinfty}{{\mathbf C}_{\infty}}
\newcommand{\RPnminustwo}{\mathbb{RP}^{n-2}}

\newcommand{\SLtwoC}{\mathrm{SL}(2,{\mathbb C})}
\newcommand{\GLtwoC}{\mathrm{GL}(2,{\mathbb C})}
\newcommand{\SLtwoR}{\mathrm{SL}(2,{\mathbb R})}
\newcommand{\PSLtwoC}{\mathrm{PSL}(2,{\mathbb C})}
\newcommand{\PSLtwoR}{\mathrm{PSL}(2,{\mathbb R})}
\newcommand{\SLtwoZ}{\mathrm{SL}(2,{\mathbb Z})}
\newcommand{\PSLtwoZ}{\mathrm{PSL}(2,{\mathbb Z})}

\newcommand{\A}{{\mathcal A}}
\newcommand{\B}{{\mathcal B}}
\newcommand{\C}{{\mathcal X}}
\newcommand{\D}{{\mathcal D}}
\newcommand{\E}{{\mathcal E}}
\newcommand{\F}{{\mathcal F}}
\newcommand{\T}{{\mathcal T}}

\newcommand{\MCG}{\mathcal{MCG}}
\newcommand{\EE}{\mathbb{E}^2}
\newcommand{\HH}{\mathbb{H}^2}
\newcommand{\HHH}{\mathbb{H}^3}
\newcommand{\tr}{{\hbox{tr}\,}}
\newcommand{\Hom}{\mathrm{Hom}}
\newcommand{\Aut}{\mathrm{Aut}}
\newcommand{\Inn}{\mathrm{Inn}}
\newcommand{\Out}{\mathrm{Out}}
\newcommand{\SL}{\mathrm{SL}}
\newcommand{\BQ}{\rm{BQ}}
\newcommand{\Id}{\rm{Id}}

\newcommand{\setn}{{[n]}}
\newcommand{\powern}{{P(n)}}
\newcommand{\nck}{{C(n,k)}}

\newcommand{\hatI}{{\hat{I}}}
\newcommand{\TkDelta}{{T^{|k|}(\Delta)}}
\newcommand{\vecDelta}{{\vec{\Delta}_{\phi}}}

\newcommand{\Tabstwo}{{T^{|2|}(\Delta)}}
\newcommand{\Tnminusone}{{T^{|n-1|}(\Delta)}}
\newcommand{\Hur}{{\mathcal{H}}}

\newenvironment{pf}{\noindent {\it Proof.}\quad}{\square \vskip 10pt}

\title[Carrier graphs for representations of the rank two free group]{Carrier graphs for representations of the rank two free group into isometries of hyperbolic three space}
\author[S.P. Tan and B. Xu]{Ser Peow Tan and Binbin Xu}
\address{Department of Mathematics \\
National University of Singapore \\
Singapore 119076} \email{mattansp@nus.edu.sg}
\address{Departement des Math\'eques  \\
Universit\'{e} du Luxembourg, Luxembourg} \email{binbin.xu@uni.lu}

\subjclass[2000]{}

%
%

 \begin{abstract}
 Carrier graphs were first introduced for closed hyperbolic 3-manifolds by White. In this paper, we first generalize this definition to carrier graphs  for representations of a rank two free group into the isometry group of hyperbolic three space. Then we prove the existence and the finiteness of minimal carrier graphs for those representations which are discrete, faithful and geometrically finite, and more generally, those that satisfy certain finiteness conditions first introduced by Bowditch.
 \end{abstract}

 \maketitle
 \tableofcontents

 \vspace{10pt}
\section{Introduction}
Let $M$ be a hyperbolic $3$-manifold of rank $n$. A connected finite graph $\Gamma$ is called an {\textit{$n$-graph}} if all vertices are trivalent and the rank of the graph is $n$. A {\textit{carrier $n$-graph}} for $M$ is an $n$-graph together with a map $f:\Gamma\rightarrow M$ such that $f_\ast:\pi_1(\Gamma)\rightarrow\pi_1(M)$ is an epimorphism. This definition was first introduced by White in \cite{white2002cag} for a closed connected hyperbolic 3-manifold. A carrier $n$-graph is said to be {\textit{minimal}} if the image of $f$ has the minimal length among all carrier $n$-graphs for $M$. By studying the minimal carrier graph, White showed that the injectivity radius of a closed hyperbolic 3-manifold $M$ is bounded above by a constant only depending on its rank. In \cite{siler2012gd}, Siler studied the geometric properties of the carrier graph for a general hyperbolic $3$-manifold. However the existence and uniqueness of the minimal carrier graph of a hyperbolic $3$-manifold in general is still unclear. In this paper, we consider this problem for geometrically finite $M$ whose fundamental group is isomorphic to $\FF_2$ the rank two free group.

Any such $M$ is isometric to a quotient of $\HHH$ by the action of the image of a representation $\rho:\FF_2\rightarrow\PSLtwoC$ which is discrete and faithful. By the result of Culler in \cite{culler1986aim}, these representations can be lifted to representations in $\SLtwoC$ up to $\pm \Id$. Hence any such $M$ can be considered as a point in the $\SLtwoC$-character variety  $\chi(\SLtwoC)$ which consists of all representations of $\FF_2$ to $\SLtwoC$ up to conjugacy. There is an open subset in $\chi(\SLtwoC)$ which is invariant under the action of $Out(\FF_2)$ satisfying certain conditions first introduced by Bowditch. Moreover, the $Out(\FF_2)$-action on it is properly discontinuous. We extend our study to the representations corresponding to the points in this open subset.

The character variety $\chi=\chi(\SLtwoC)$ has been studied by many people. The theorem of Vogt and Fricke implies that it can be identified with $\CC^3$, where the coordinates are given by taking the traces of the image of a fixed superbasis of $\FF_2$. By considering the natural embedding of $\SLtwoR$ into $\SLtwoC$, this space also contains Fricke spaces of orientable hyperbolic surfaces with Euler characteristic $-1$. In \cite{goldman2009ems}, Goldman described those Fricke spaces as subspaces of $\chi$ using the above coordinate system. Of particular interest is the Fricke space of a once-puncture torus, where in \cite{mcshane1991thesis}, McShane proved a remarkable identity. The coordinates for this space satisfies the Markoff equation. By using this fact, in \cite{bowditch1996blms}, Bowditch gave an alternative proof of this identity, and then in \cite{bowditch1998plms}, generalized this identity to the type preserving representations satisfying certain conditions which he called {\textit{Q-conditions}}. In \cite{tan-wong-zhang2008advm}, by dropping the type-preserving restriction, Tan, Wong and Zhang proved a variation of McShane identity for general representations satisfying the {\textit{Bowditch Q-conditions}}, or {\textit{BQ-conditions}} for short. A representation satisfying BQ-conditions will be called a {\textit{BQ-representation}}.

In this paper, we will consider the subspace of conjugacy classes of all BQ-representations and we call it the {\textit{Bowditch space}}. The Bowditch space contains the Schottky representations, that is, the discrete faithful convex cocompact representations arising from the holonomy representations of convex cocompact hyperbolic three manifolds with fundamental group $\FF_2$. However, the whole subspace is strictly larger than that, see \cite{tan-wong-zhang2008advm} or \cite{STY}. For example, it contains representations of $\FF_2$ into $\SLtwoR$ arising from hyperbolic structures on the torus with one cone singularity. When the cone angle is irrational, such a representation is not discrete. 

We define the carrier $2$-graph for a generic irreducible representation $\rho$ which is a generalization of the one defined by White in \cite{white2002cag}. Roughly speaking, a carrier $2$-graph for $\rho$ is an equivalence class of a pair $(\Gamma,\Psi)$, where $\Gamma$ is a $2$-graph marked by a basis of $\FF_2$, $\Psi$ is a $\rho$-equivariant homeomorphism from the universal cover $\widetilde{\Gamma}$ to $\HHH$ and the equivalence relation is defined by considering the $\Inn(\FF_2)$-action on the pair. In particular, $\Psi$ induces an automorphism $h_\Psi$ of $\FF_2$. By pulling back the intrinsic metric on $\Psi(\widetilde{\Gamma})$ induced by the hyperbolic metric, we obtain a metric on $\Gamma$, and the {\textit{length}} of a carrier $2$-graph is defined to be the sum of its edge lengths with respect to this metric. A carrier $2$-graph is {\textit{minimal}} if it has the shortest length among all carrier $2$-graphs.

To study the existence of the minimal carrier $2$-graph, we decompose the space of all carrier $2$-graph into a disjoint union of subsets by considering $h_\Psi$. In each subset, a carrier $2$-graph with the shortest length, if it exists, is called a  {\textit{critical carrier $2$-graph}}. Then our result can be stated as follows:
\begin{thm}\label{main}
	If $\rho:\FF_2\rightarrow\SLtwoC$ satisfies the BQ-conditions, then  critical carrier $2$-graphs exist and there are finitely many of them.
\end{thm}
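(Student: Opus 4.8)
The plan is to first fix the combinatorial type of the underlying $2$-graph and a marking, so that $h_\Psi$, and hence the subset, is determined. Up to isomorphism there are exactly two trivalent rank-two graphs, the theta graph and the dumbbell, and for the theta graph the marking is precisely a choice of superbasis $\{a,b,ab\}$ of $\FF_2$; by the BQ-conditions each of $\rho(a),\rho(b),\rho(ab)$ is loxodromic. A length-minimizing representative in the subset must have geodesic edges, so after passing to the universal cover the length becomes a function of the two vertex positions $\tilde v_1,\tilde v_2\in\HHH$,
\[
\ell_h(\tilde v_1,\tilde v_2)=d(\tilde v_1,\tilde v_2)+d(\tilde v_1,\rho(g_2)\tilde v_2)+d(\tilde v_1,\rho(g_3)\tilde v_2),
\]
where $g_2,g_3$ are determined by the superbasis and the spanning-tree edge contributes the term $d(\tilde v_1,\tilde v_2)$. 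As a sum of hyperbolic distance functions, $\ell_h$ is convex on $\HHH\times\HHH$; and since no two of the loxodromics involved share a fixed point at infinity (irreducibility of $\rho$), any escape of $(\tilde v_1,\tilde v_2)$ to infinity forces some summand to diverge, so $\ell_h$ is proper and attains a minimum $L(h)$.

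Next I would organize the subsets combinatorially. The theta-type subsets are indexed by the superbases of $\FF_2$, equivalently by the vertices of the Bowditch tree $T$ whose edges are the elementary mutations of superbases. Bounding the graph length below by translation lengths gives the clean estimate
\[
L(h)\ \ge\ \tfrac12\bigl(\ell_{\rho(a)}+\ell_{\rho(b)}+\ell_{\rho(ab)}\bigr),
\]
where $\ell_\gamma$ is the translation length of $\gamma$, since each edge lies on exactly two of the three loops. As $\ell_\gamma$ grows like $2\log|\tr\gamma|$ for large trace, $L(h)$ is controlled from below by the traces of the superbasis elements. The crucial input is then the properness of the trace functions under the BQ-conditions, established by Bowditch and by Tan--Wong--Zhang: only finitely many superbases have all three traces bounded, and outside a finite connected core of $T$ the traces strictly increase in the direction leaving the core. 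Hence $L(h)\to\infty$ as $h$ leaves the core.

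The main obstacle, and the place where I expect the real work, is to upgrade this into the existence and \emph{non-existence} of genuine critical graphs. For subsets meeting the finite core I would show that the minimizer of $\ell_h$ is non-degenerate (the two vertices are distinct and every edge has positive length), so that an honest critical carrier $2$-graph exists, and the global infimum over the core then realizes the minimal carrier graph. For subsets far from the core I would show the opposite: the monotone growth of the dominant translation length along $T$ forces the infimum of $\ell_h$ to be realized only in the degenerate limit $\tilde v_1\to\tilde v_2$, where the theta graph collapses to a wedge of two circles and ceases to be a $2$-graph, so that no critical graph exists there. The delicate point is precisely this degeneracy dichotomy—relating the off-diagonal minimum to the diagonal value through the Markoff-type mutation relations—together with the parallel analysis for the dumbbell type, whose edge-elements are again primitive and hence loxodromic. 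Granting the dichotomy, finiteness of critical carrier $2$-graphs follows from the finiteness of the Bowditch core, and their existence follows from the core being non-empty.
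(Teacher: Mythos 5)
Your setup is sound as far as it goes --- the decomposition by marking, the convexity-plus-properness argument giving a (possibly degenerate) minimizer of $\ell_h$, the lower bound $L(h)\ge\tfrac{1}{2}\bigl(\ell_{\rho(a)}+\ell_{\rho(b)}+\ell_{\rho(ab)}\bigr)$, and the Bowditch/Tan--Wong--Zhang trace properness confining small values of $L(h)$ to a finite core of the superbasis tree --- but the proof has a genuine gap exactly where you flag it: the ``degeneracy dichotomy'' is assumed, not proved, and it cannot be deduced from translation-length (trace) growth alone. The dichotomy is not a finishing touch but the entire content of the theorem. Since a critical graph is by definition a minimizer \emph{within its own class}, the fact that $L(h)\to\infty$ as $h$ leaves the core does not exclude critical graphs in distant classes; what excludes them is precisely the claim that there the infimum is attained only at the degenerate configuration (collapsed middle edge, one valence-$4$ vertex), so that every honest trivalent carrier graph in such a class can be strictly shortened. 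Whether the minimizer is degenerate is a statement about \emph{angles}, not lengths, and the Markoff-type trace recursion by itself gives no angle control.

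The paper supplies exactly this missing geometry, by a different reduction. Via the Coxeter extension one writes $\rho(X)=r_Yr_Z$, $\rho(Y)=r_Zr_X$, $\rho(Z)=-r_Xr_Y$ for $\pi$-rotations with pairwise disjoint axes $(\gamma_X,\gamma_Y,\gamma_Z)$; a convexity (symmetrization) lemma shows a length-minimizing graph in a class is invariant under $r_Z$, hence is the double of a tree connecting these three axes --- note: the axes of the involutions, not of the generators --- with length twice the Steiner length. The Fermat point/Steiner tree theory of Sections 5--6 then identifies the minimizer: its double is an honest trivalent graph precisely when the triple is $2\pi/3$-acute (balanced point off the axes), and is the collapsed figure-eight when the triple is $2\pi/3$-obtuse. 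Finally, the hexagon trigonometry of Section 4.3 (complex side lengths tending to $i\pi$, $i\pi$, $0$ under the BQ-conditions) shows that outside a compact subtree of $\Sigma$ the configuration is always $2\pi/3$-obtuse, so no critical graphs arise there, while each sink of the Steiner-length edge orientation yields one. Your plan needs substitutes for both inputs --- the reduction to the involution axes and the asymptotic angle estimate --- and neither is supplied by convexity, properness, or the trace growth you invoke.
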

Since a minimal carrier $2$-graph is also a critical carrier $2$-graph, as a corollary, we show that
\begin{cor}
	If a representation $\rho$ satisfies the BQ-conditions, then it admits finitely many minimal carrier $2$-graphs.
\end{cor}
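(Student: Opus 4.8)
The plan is to obtain the corollary as an immediate consequence of Theorem \ref{main}, using the observation flagged just before its statement: every minimal carrier $2$-graph is critical. First I would make this precise. Recall that the space of carrier $2$-graphs is partitioned according to the induced automorphism $h_\Psi$, and that within each block the shortest graph, when it exists, is declared critical. If a carrier $2$-graph $(\Gamma,\Psi)$ is minimal, then it has the least length among \emph{all} carrier $2$-graphs; in particular it has the least length among those in its own block (the one indexed by $h_\Psi$), and is therefore critical. Hence the set of minimal carrier $2$-graphs is contained in the set of critical carrier $2$-graphs.

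Granting this inclusion, the finiteness half is immediate: Theorem \ref{main} guarantees that there are only finitely many critical carrier $2$-graphs, so a fortiori there are only finitely many minimal ones. It then remains to check that at least one minimal carrier $2$-graph exists, i.e.\ that the infimum of length over the whole space is attained. By Theorem \ref{main} the set of critical carrier $2$-graphs is nonempty and finite, so among their lengths there is a smallest value $\ell_{\min}$, realized by some critical graph $C_0$. For any carrier $2$-graph $G$ lying in a block that contains a critical graph, the critical graph in that block is no longer than $G$ and is itself of length at least $\ell_{\min}$, whence $\mathrm{length}(G)\ge \ell_{\min}$. Thus $C_0$ has length at most that of every carrier $2$-graph lying in a block with a critical representative, making it a candidate minimal graph.

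The one point that genuinely requires the internal machinery of the proof of Theorem \ref{main}, rather than merely its statement, is to rule out the degenerate possibility that some block \emph{without} a critical graph nevertheless contains carrier $2$-graphs of length strictly less than $\ell_{\min}$. This is exactly where the properness and compactness properties of the length function established while proving Theorem \ref{main} enter: they ensure that any block carrying graphs of length below a fixed bound attains its infimum there, so no block can support arbitrarily short graphs without also supporting a critical one. I expect this to be the main obstacle, since everything else is bookkeeping. With this input in hand, $\ell_{\min}$ coincides with the global infimum and is attained, so $C_0$ is minimal; combined with the inclusion of the first paragraph and the finiteness of the second, the representation admits at least one and only finitely many minimal carrier $2$-graphs.
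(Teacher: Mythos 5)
Your first two paragraphs reproduce, in expanded form, exactly what the paper does: its entire proof of the corollary is the single observation that a minimal carrier graph is in particular critical, so that finiteness follows at once from Theorem \ref{main}. That part of your argument is correct and is the same approach as the paper's.

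The genuine gap is in your third paragraph, where you try to secure existence. The property you invoke --- that ``any block carrying graphs of length below a fixed bound attains its infimum there'' --- is false, and the paper's own proof of Theorem \ref{main} shows why. For every superbasis $(X,Y,Z)$ lying outside the compact attracting subtree of Proposition \ref{attractingtree4}, the associated Steiner tree is of type (2), with one valence-two vertex on one of the axes $\gamma_X$, $\gamma_Y$, $\gamma_Z$; its double then has a valence-four vertex and so is \emph{not} a trivalent carrier $2$-graph. The corresponding block $\mathcal{CG}_\rho([X],[Y])$ therefore contains trivalent graphs whose lengths approach twice that Steiner length arbitrarily closely, yet the infimum is never attained: there are infinitely many blocks with finite infimum and no critical graph. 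A compactness statement of the kind you want would force a critical graph into each of these blocks, contradicting the finiteness assertion of Theorem \ref{main} itself. What actually closes the existence argument is different, and comes from the internal structure of the proof of Theorem \ref{main}: the infimum of the length on a block $\mathcal{CG}_\rho([X],[Y])$ equals the length of the shorter of the doubles $D(X,Y,Z)$ and $D(X,Y^{-1},Z')$ of the Steiner trees at the two superbases adjacent to the edge of $\Sigma$ labelled by $([X],[Y])$, i.e.\ essentially twice the smaller of the two values of $L_\rho$ there. By Proposition \ref{attractingtree4}, $L_\rho$ attains its global minimum at a vertex of the compact attracting subtree, and (after resolving ties in the edge orientation suitably) such a minimizing vertex is a sink; by the lemma on sinks, its Steiner tree doubles to an honest critical carrier graph, whose length $2\min L_\rho$ is a lower bound for the infimum on \emph{every} block, attained or not. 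That graph is therefore minimal. So existence does hold, but via the identification of block infima with Steiner lengths and the attracting-subtree argument, not via block-wise attainment of infima.
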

The main ingredients in the proof is the convexity lemma in hyperbolic geometry and a detailed analysis of the Steiner tree for a triple of pairwise disjoint geodesics in $\HHH$. Informally, the Steiner tree is a graph connecting triple of pairwise disjoint geodesics with minimal length among all such graphs. This is a generalization of the Steiner tree for points in Euclidean plane.

The outline of the paper is as follows. In section $2$, we recall some necessary background on $3$-dimensional hyperbolic geometry.

In section $3$, we recall background on rank two free groups and the definition of the Bowditch Q-conditions. 

In section $4$, we will recall the definition of the tree of superbases for $\FF_2$ and discuss three different edge orientations on it induced by an irreducible representation of $\FF_2$ into $\SLtwoC$. In particular, we show that if the representation satisfies the Bowditch Q-conditions, then for any edge orientation introduced in this section, there is a compact attracting subtree.

In section $5$, we prove the existence and uniqueness of the Fermat point of a hyperbolic triangle, which is a result similar to the Euclidean case.

In section $6$, we generalize the definition of the Fermat point result for a triple of geodesics in $\HHH$, from which we define the Steiner tree and prove its existence. We also give a characterization of its combinatorial type.

In section $7$, we define the carrier graph for a representation.

In section $8$, we give the proof of the main theorem and its corollary. 

In the last section, we will discuss the result for BQ-representations which preserve a hyperbolic plane in $\HHH$ as examples.

 \bigskip

 \noindent {\it Acknowledgements}. We are grateful to Ara Basmajian, Martin Bridgeman, Jaejeong Lee, Hugo Parlier, Jean-Marc Schlenker and Andrew Yarmola for helpful conversations and comments. We would like to thank Jonah Gaster for bringing our attention to the work of White \cite{white2002cag} and Silver \cite{siler2012gd}. The work has been done during the second author's several visits to the mathematics department of National University of Singapore. He would like to thank them for their hospitality. These visits were partially supported by the Post-Doc fellowship from Korea Institute for Advanced Study. Tan was partially supported by the National University 	of Singapore academic research grant R-146-000-235-114.

 \bigskip

\section{Three dimensional hyperbolic geometry}
\subsection{Upper-half space model}
The upper half space model of $3$-dimensional hyperbolic space is defined to be the following set:
\begin{displaymath}
\HHH=\{(a,b,c)\in\RR^3\mid c>0\},
\end{displaymath}
equipped with the hyperbolic metric:
\begin{displaymath}
(\mathrm{d}s)^2=\frac{(\mathrm{d}a)^2+(\mathrm{d}b)^2+(\mathrm{d}c)^2}{c^2}.
\end{displaymath}
The boundary $\partial\HHH$ of $\HHH$ is the one point compactification of the plane: $$\{(a,b,c)\in\RR^3\mid c=0\}.$$
It can be identified with the Riemann sphere $\hat{\CC}=\CC\cup\infty$ with $u=a+ib$.
 
In the upper half space model, the geodesics are either vertical lines or half circles orthogonal, at its both ends, to the plane defined by $c=0$. Each geodesic is uniquely determined by its end points on $\partial\HHH$, hence there is a natural 1-1 correspondence between (oriented) geodesics and (ordered) pairs of distinct points on $\partial\HHH$. Given two distinct points $u$ and $u'$ in $\hat{\CC}$, we denote by $[u,u']$ the non-oriented geodesic determined by them and by $(u,u')$ (resp. $(u',u)$) its two oriented versions oriented from $u$ to $u'$ (resp. from $u'$ to $u$).

Given two distinct geodesics, their positions relative to each other have three possible types:
\begin{itemize}
	\item intersecting in $\HHH$,
	\item parallel, i.e. sharing one end point in $\partial\HHH$,
	\item disjoint (or ultra-parallel), i.e. disjoint in $\HHH\cup\partial\HHH$.
\end{itemize}
The distance between two disjoint geodesics is positive and can be realized by their intersection points with their common perpendicular geodesic. The distance between two parallel geodesics is $0$ which is not realizable. The distance between two intersecting geodesics is $0$ realized by their intersection point. 

\subsection{Orientation preserving isometries of \texorpdfstring{$\HHH$}{Lg}}

The orientation preserving isometry group of $\HHH$ can be identified with $\PSLtwoC$. Its elements can be classified by their fixed points in $\HHH\cup\partial\HHH$. A non-identity element in $\PSLtwoC$ is called
\begin{itemize}
	\item {\textit{loxodromic}} if it preserves a geodesic and acts as translation on it;
	\item {\textit{parabolic}} if it has a unique fixed point contained in $\partial\HHH$;
	\item {\textit{elliptic}} if it fixes a geodesic pointwise in $\HHH$.
\end{itemize}
In particular, we call an element in $\PSLtwoC$ an {\textit{involution}} if it is elliptic of order $2$. The geodesic fixed by a loxodromic element or an elliptic element is called its {\textit{axis}}. 

\begin{defn}
	The {\textit{real translation distance}} of $\phi\in\PSLtwoC$ is defined to be the infimum:
	\begin{equation*}
	a(\phi):=\inf_{p\in\HHH}\mathrm{d}_{\HHH}(p,\phi(p)),
	\end{equation*}
	where $\mathrm{d}_{\HHH}$ is the distance in $\HHH$ with respect to the hyperbolic metric.
\end{defn}

If $\phi$ is a loxodromic element, then $a(\phi)$ is positive and realized by any point on the axis of $\phi$; if $\phi$ is elliptic, then $a(\phi)$ is $0$ and realized by any point on the axis of $\phi$; if $\phi$ is parabolic, then $a(\phi)$ equals $0$ and is not realizable. 

A well known fact is that any element $\phi\in\PSLtwoC$ can be written as a composition of two involutions of $\HHH$. Consider the axes of these two involutions. For any $\phi$ different from identity, they are
\begin{itemize}
	\item disjoint, if $\phi$ is loxodromic, and the distance between them equals to $a(\phi)/2$;
	\item parallel, if $\phi$ is parabolic;
	\item intersecting with each other, if $\phi$ is elliptic.
\end{itemize}
This decomposition is not unique. Now consider two elements $\phi$ and $\psi$ in $\PSLtwoC$. We have the following well-known theorem:
\begin{thm}[Coxeter extension]\label{irre}
	Two elements $\phi$ and $\psi$ do not have any common fixed point on $\partial\HHH$, if and only if there is a unique triple of involutions $(\iota_1,\iota_2,\iota_3)$ such that 
	\begin{eqnarray}\label{rotation}
	\phi=\iota_3\iota_2,\nonumber\\
	\psi=\iota_2\iota_1,\\
	(\phi\psi)^{-1}=\iota_1\iota_3.\nonumber
	\end{eqnarray}
\end{thm}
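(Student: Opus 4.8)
The plan is to reduce the existence and uniqueness of the triple $(\iota_1,\iota_2,\iota_3)$ to the existence and uniqueness of the \emph{middle} involution $\iota_2$, and then to identify $\iota_2$ geometrically as the half-turn about the common perpendicular of the axes of $\phi$ and $\psi$. First I would observe that the third relation is redundant: if $\phi=\iota_3\iota_2$ and $\psi=\iota_2\iota_1$ with each $\iota_j$ an involution, then $\phi\psi=\iota_3\iota_2\iota_2\iota_1=\iota_3\iota_1$ because $\iota_2^2=\mathrm{Id}$, so $(\phi\psi)^{-1}=\iota_1\iota_3$ follows automatically. Moreover, given any candidate $\iota_2$, the outer involutions are forced to be $\iota_3=\phi\iota_2$ and $\iota_1=\iota_2\psi$. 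Thus the whole problem becomes: for which involutions $\iota_2$ are both $\phi\iota_2$ and $\iota_2\psi$ again involutions? A short computation shows $(\phi\iota_2)^2=\mathrm{Id}$ is equivalent to $\iota_2\phi\iota_2=\phi^{-1}$ and $(\iota_2\psi)^2=\mathrm{Id}$ to $\iota_2\psi\iota_2=\psi^{-1}$; that is, $\iota_2$ must conjugate each of $\phi$ and $\psi$ to its inverse.

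Next I would translate these conjugacy conditions into the geometry of axes, using the two-involution decomposition recalled just before the theorem. Interpreting an involution as a half-turn about its axis, I would show that $\iota_2\phi\iota_2=\phi^{-1}$ holds precisely when the axis of $\iota_2$ is perpendicular to (and hence meets) the axis of $\phi$: in the loxodromic and elliptic cases this says $\iota_2$ interchanges the two endpoints of the axis of $\phi$ on $\partial\HHH$, while in the parabolic case it says $\iota_2$ fixes the parabolic fixed point. The verification in each case is a normal-form computation, for instance conjugating so that the axis of $\phi$ is the vertical geodesic over $0$ and $\infty$, together with the observation that an element of $\PSLtwoC$ is determined by its trace and its ordered pair of fixed points, so that a half-turn reversing the fixed points of $\phi$ produces exactly $\phi^{-1}$.

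The heart of the argument is then the simultaneous condition: $\iota_2$ must be the half-turn about a geodesic that is a common perpendicular to the axes of both $\phi$ and $\psi$. Here I would invoke the trichotomy for distinct geodesics in $\HHH$ recorded in Section~2. Two geodesics admit a unique common perpendicular when they are disjoint or when they intersect, but none when they are parallel, i.e.\ when they share an ideal endpoint; and a pair of axes (or degenerate ideal fixed points, in the parabolic case) shares an ideal endpoint exactly when $\phi$ and $\psi$ have a common fixed point on $\partial\HHH$. Hence such an $\iota_2$ exists and is unique if and only if $\phi$ and $\psi$ share no fixed point, which is the desired equivalence; the outer involutions $\iota_1=\iota_2\psi$ and $\iota_3=\phi\iota_2$ are then determined, completing the construction.

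I anticipate that the main obstacle is the case analysis needed to make ``common perpendicular of the axes'' uniformly correct across the loxodromic, parabolic, and elliptic types and all their mixed combinations. The parabolic case is the most delicate, since the axis degenerates to a single ideal point and the perpendicularity condition must be reinterpreted as passing through that point. I would also need to confirm the converse in each type combination, namely that a shared fixed point genuinely destroys either existence or uniqueness of $\iota_2$: when $\phi$ and $\psi$ share both fixed points there is a one-parameter family of valid middle involutions, whereas when they share exactly one there is none.
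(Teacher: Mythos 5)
The paper itself gives no proof of Theorem \ref{irre}: it is stated as a ``well-known theorem'' (the Coxeter extension), with Fenchel's book serving as the reference for this background section. So there is no in-paper argument to compare against; your proposal supplies an actual proof where the paper defers to the literature, and its architecture is the standard one and is sound. The reduction to the middle involution is correct: the third relation is indeed automatic, $\iota_1$ and $\iota_3$ are forced to equal $\iota_2\psi$ and $\phi\iota_2$, and the problem becomes characterizing the involutions $\iota_2$ that conjugate each of $\phi$ and $\psi$ to its inverse. Identifying these as the half-turns about geodesics perpendicular to the axis (respectively, passing through the parabolic fixed point), and then invoking the fact that two geodesics of $\HHH$ admit a unique common perpendicular precisely when they share no ideal endpoint (here the three-dimensionality matters: intersecting geodesics do have a common perpendicular in $\HHH$, unlike in $\HH$), correctly yields both directions of the equivalence.

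Two points in your case analysis need repair, though neither is fatal. First, $(\phi\iota_2)^2=\Id$ is equivalent to $\iota_2\phi\iota_2=\phi^{-1}$ only together with the nontriviality requirement $\phi\iota_2\neq\Id$. When $\phi$ is itself an involution, the conjugation condition reads ``$\iota_2$ commutes with $\phi$'' and is satisfied not only by half-turns about geodesics orthogonal to the axis of $\phi$ but also by $\iota_2=\phi$ itself; that extra solution must be discarded precisely because it makes $\iota_3=\Id$. So your ``precisely when the axis of $\iota_2$ is perpendicular to the axis of $\phi$'' needs this exclusion. Second, your closing claim that sharing exactly one fixed point destroys \emph{existence} is false in the sub-case of two parabolics with the same fixed point $p$: every half-turn whose axis ends at $p$ conjugates both $\phi$ and $\psi$ to their inverses and produces a valid triple of involutions, so existence holds there and it is \emph{uniqueness} that fails, with an infinite family of triples. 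The converse of the theorem survives---in every shared-fixed-point configuration the conjunction ``exists and is unique'' fails---but the dichotomy must be stated accordingly. (A smaller imprecision: an element of $\PSLtwoC$ is \emph{not} determined by its trace and ordered fixed points in the elliptic case, since an elliptic and its inverse share both; there you must rely on the normal-form computation you mention, which does close that step.)
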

\begin{rmk}
	In the next section, we will see that the condition that $\phi$ and $\psi$ do not have common fixed points in $\partial\HHH$ is equivalent to the condition that the subgroup of $\PSLtwoC$ generated by $\phi$ and $\psi$ is irreducible.
\end{rmk}

Assume that $\phi$ and $\psi$ do not have common fixed points on $\partial\HHH$. We consider $(\iota_1,\iota_2,\iota_3)$ the unique triple of involutions associated to $(\phi,\psi,(\phi\psi)^{-1})$. Let $\gamma_1$, $\gamma_2$ and $\gamma_3$ be the the axes of $\iota_1$, $\iota_2$ and $\iota_3$ respectively. If moreover $\phi$, $\psi$ and $(\phi\psi)^{-1}$ are all loxodromic elements, then the $\gamma_i$'s are disjoint in $\HHH\cup\partial\HHH$. Therefore we have a unique right angled hexagon which is bounded by $\gamma_i$'s and their pairwise common perpendicular geodesics. It is possible that this right angled hexagon is degenerate, since the sides along $\gamma_i$'s may have zero length.

\subsection{Lifting \texorpdfstring{$\PSLtwoC$}{Lg} to \texorpdfstring{$\SLtwoC$}{Lg}}
In order to talk about the trace, we would like to consider $\SLtwoC$ instead of $\PSLtwoC$. A reference for this part can be found in Section V and Section VI in \cite{fenchel}.

Any $\phi\in\PSLtwoC$ has two lifts $\xi$ and $-\xi$ in $\SLtwoC$. The actions of $\pm\xi$ on $\HHH$ are the same as that of $\phi$. In particular, the subgroup $\pm \Id$ acts trivially on $\HHH$. Therefore, the action of $\SLtwoC$ on $\HHH$ is not free. If $\phi$ is a loxodromic (resp. parabolic, elliptic) element, then its lifts are also called loxodromic (resp. parabolic, elliptic) elements in $\SLtwoC$. Elements of different types in $\SLtwoC$ can be distinguished by considering their traces: an element $\xi\in\SLtwoC$ different from $\pm\Id$ is
\begin{itemize}
	\item loxodromic if and only if its trace is not contained in the segment $[-2,2]$;
	\item parabolic if and only if its trace is $2$ or $-2$;
	\item elliptic if and only if its trace is contained in $(-2,2)$.
\end{itemize}
As with $\PSLtwoC$-elements, the geodesic fixed by a loxodromic element or an elliptic element is called its axis. The translation distance $a(\xi)$ of $\xi\in\SLtwoC$ can be defined in the same way by considering their action on $\HHH$.

From the discussion in the previous part, any isometry $\phi$ can be written as a composition $\iota_2\iota_1$ of two involutions. Hence an alternative way to find the lift of $\phi$ is by considering the lifts of $\iota_2$ and $\iota_1$. 

More precisely, let $\iota$ be an involution whose axis is $[u,u']$, and consider its two associated oriented geodesics $(u,u')$ and $(u',u)$. The two lifts of $\iota$ associated to $(u,u')$ and $(u',u)$ respectively are:
\begin{displaymath}
\displaystyle{\frac{i}{u-u'}}\left(\begin{array}{cc}
u+u' & -2uu'\\
2 & -u-u'		
\end{array}\right)\quad \textrm{and}\quad
\displaystyle{\frac{i}{u'-u}}\left(\begin{array}{cc}
u+u' & -2uu'\\
2 & -u-u'		
\end{array}\right).
\end{displaymath}
If $u=\infty$ , then they become 
\begin{displaymath}
\left(\begin{array}{cc}
i & -2u'i\\
0 & -i		
\end{array}\right)\quad \textrm{and}\quad
\left(\begin{array}{cc}
-i & 2u'i\\
0 & i		
\end{array}\right).
\end{displaymath}
We denote $(u,u')$ and $(u',u)$ by $\gamma$ and $\overline{\gamma}$, and the corresponding lifts by $r$ and $\overline{r}$ respectively. We call them the {\textit{$\pi$-rotations}}. 
Moreover we can verify that:
\begin{eqnarray*}
	&&r^2=\overline{r}^2=-\mathrm{Id},\\
	&&r\overline{r}=\overline{r}r=\mathrm{Id},
\end{eqnarray*}
which then implies that
\begin{equation*}
	r^{-1}=-r=\overline{r}.
\end{equation*}

Each involution $\iota_i$ has two lifts $r_i$ and $\overline{r}_i$. Then there are four different combinations satisfying the following relations:
\begin{eqnarray*}
	\xi=r_2r_1=\overline{r}_2\overline{r}_1,\\
	-\xi=\overline{r}_2r_1=r_2\overline{r}_1,
\end{eqnarray*}  
where $\xi$ and $-\xi$ are the two lifts of $\phi$. 

Therefore, the $\SLtwoC$ version of Theorem \ref{irre} can be stated as follows:
\begin{prop1}\label{irre1}
	Let $\xi$ and $\eta$ be two elements of $\SLtwoC$. Then $\xi$ and $\eta$ do not have common fixed points on $\partial\HHH$, if and only if there exists exactly two triples of $\pi$-rotations: $(r_1,r_2,r_3)$ and its inverse $(\overline{r}_1,\overline{r}_2,\overline{r}_3)$, such that
		\begin{eqnarray*}
			\xi&=&r_3 r_2=\overline{r}_3\overline{r}_2,\\
			\eta&=&r_2r_1=\overline{r}_2\overline{r}_1,\\
			(\xi\eta)^{-1}&=&-r_1r_3=-\overline{r}_1\overline{r}_3.
		\end{eqnarray*}
\end{prop1}

A {\textit{double-cross}} $(\gamma; \gamma_1,\gamma_2)$ consists of a triple of distinct oriented geodesics, such that $\gamma$ is the common perpendicular geodesic of $\gamma_1$ and $\gamma_2$. We denote the end points of $\gamma$, $\gamma_1$ and $\gamma_2$, by $(u,u')$, $(v,v')$ and $(w,w')$ respectively. We denote by $p_1$ and $p_2$ the intersection points of $\gamma_1$ and  $\gamma_2$ with $\gamma$ respectively. Let $P_1$ (resp. $P_2$) denote the hyperbolic plane containing $\gamma$ and $\gamma_1$ (resp. $\gamma_2$). We denote by $P_1^+$ (resp. $P_2^+$) the half plane in $P_1$ (resp. $P_2$) bounded by $\gamma$ containing $p_1v'$ (resp. $p_2w'$). 

The {\textit{angle}} of the double-cross $(\gamma; \gamma_1,\gamma_2)$ is defined to be the  angle from $P_1^+$ to $P_2^+$ with respect to the orientation of $\gamma$, taking values in $[0,2\pi)$. To each double-cross, we can associate to it a complex number $l(\gamma; \gamma_1,\gamma_2)$, such that the real part is the signed distance from $p_1$ to $p_2$, and the imaginary part is the angle of the double-cross.

Given four distinct points $u_1$, $u_2$, $u_3$ and $u_4$ in Riemann sphere, we can define their cross-ratio:
\begin{equation*}
	\mathrm{Cr}(u_1,u_2;u_3,u_4):=\frac{(u_1-u_3)(u_2-u_4)}{(u_1-u_4)(u_2-u_3)}.
\end{equation*}
Then we have the following relation:
\begin{equation*}
	\mathrm{Cr}(v',w'; u',u)=\exp(l(\gamma; \gamma_1,\gamma_2))
\end{equation*}

\begin{center}
	\includegraphics[scale=0.7]{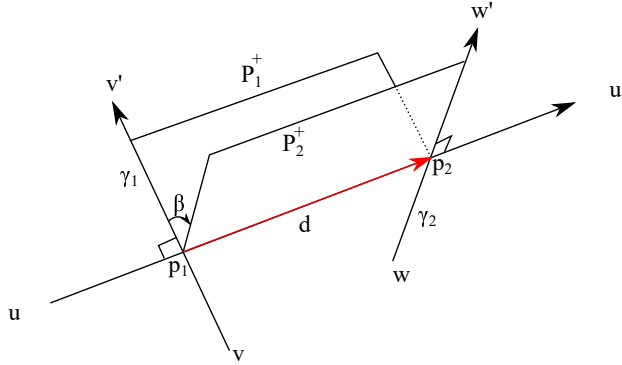}
	\captionof{figure}{$l(\gamma;\gamma_1,\gamma_2)=d+i\beta$}
\end{center}

\medskip

Let $\xi\in\SLtwoC$ be a non parabolic element different from the identity. To a decomposition $\xi=r_2r_1$, we can associate a double-cross $(\gamma; \gamma_1,\gamma_2)$ where $\gamma=\gamma(\xi)$ is the oriented axis of $\xi$, and $\gamma_i$ is the oriented axis of $r_i$. We can verify the following relation:
\begin{equation}\label{tracerotation}
\mathrm{tr}(\xi)=\mathrm{tr}(r_2r_1)=-2\cosh l(\xi).
\end{equation}
We call twice of $l(\gamma(\xi);\gamma_1,\gamma_2)$ the \textit{complex translation distance} of $\xi$. Notice that this quantity is independent of choice of the decomposition of $\xi$, therefore we will simply denote it by $l(\xi)$.

\begin{rmk}
	If $\xi$ is parabolic with $\xi=r_2r_1$, then their axes $\gamma_1$ and $\gamma_2$ share a endpoint $u$ in the boundary of $\HHH$ which is the fixed point of the action of $\xi$. We may make the following convention: if two oriented geodesic $\gamma_1$ and $\gamma_2$ point to or away from $u$ at the same time, then the complex translation distance of $\xi$ is $0$; otherwise the complex translation distance of $\xi$ is $2\pi i$. The the above formula (\ref{tracerotation}) can be extended to parabolic elements.
\end{rmk}

\section{Free group of two generators and its representations into \texorpdfstring{$\SLtwoC$}{Lg}}
\subsection{Superbases and the corresponding tree}
Let us consider the rank two free group $\FF_2$. In this part, we recall superbases for $\FF_2$ which can be viewed as vertices of a natural trivalent tree upon which the outer-automorphism group of $\FF_2$ acts. The notion of superbases was first introduced by Conway \cite{conway} for $\ZZ^2$. Superbases and the corresponding trivalent tree play an important role in the work of Bowditch \cite{bowditch1998plms} and its generalization by Tan-Wong-Zhang \cite{tan-wong-zhang2008advm}, as well as in ours. For more details about these objects, the reader may refer to \cite{goldman2009ems}, \cite{charettedrumgoldman2016} and \cite{goldmanmcshanestantchevtan2016}.

\medskip

A {\textit{basis}} of $\FF_2$ is an ordered pair $(X,Y)\in\FF_2\times\FF_2$, such that $X$ and $Y$ generate $\FF_2$ freely. An element of $\FF_2$ is said to be {\textit{primitive}} if it can be extended to a basis of $\FF_2$. A {\textit{basic triple}} of $\FF_2$ is an ordered triple
\begin{equation*}
	(X,Y,Z)\in\FF_2\times\FF_2\times\FF_2,
\end{equation*}
such that $(X,Y)$ is a basis of $\FF_2$ and $XYZ=id$ where $id$ is the identity element of $\FF_2$. The {\textit{automorphism group}} $\Aut(\FF_2)$ of $\FF_2$ acts transitively and freely on the set of bases, hence on the set of ordered triples, as well.

An element in $\FF_2$  induces an automorphism of $\FF_2$ obtained by taking conjugation of $\FF_2$ by this element. Such an automorphism is called an {\textit{inner-automorphism}}. The inner-automorphisms of $\FF_2$ form a normal subgroup of $\Aut(\FF_2)$, called the {\textit{inner-automorphism group}} and denoted by $\Inn(\FF_2)$. The quotient group 
\begin{equation*}
	\Aut(\FF_2)/\Inn(\FF_2),
\end{equation*}
is called the {\textit{outer-automorphism group}} of $\FF_2$ and denoted by $\Out(\FF_2)$.

Given a basis $(X,Y)$, we can associate to it an {\textit{elliptic involution}} $\mathfrak{e}_{(X,Y)}$ which is an automorphism of $\FF_2$ such that:
\begin{eqnarray*}
	&&\mathfrak{e}_{(X,Y)}(X)=X^{-1}\\
	&&\mathfrak{e}_{(X,Y)}(Y)=Y^{-1}
\end{eqnarray*}

Elliptic involutions associated to different bases differ by an inner automorphism. We denote by ${\Inn}^{\mathfrak{e}}(\FF_2)$ the subgroup of $\Aut(\FF_2)$ generated by elliptic involutions and $\Inn(\FF_2)$. It contains $\Inn(\FF_2)$ as an index two subgroup. The quotient group
\begin{equation*}
	{\Inn}^{\mathfrak{e}}(\FF_2)/\Inn(\FF_2),
\end{equation*}
is the center of $\Out(\FF_2)$. 

\begin{defn}
	An {\textit{ordered superbasis}} of $\FF_2$ is an ${\Inn}^{\mathfrak{e}}(\FF_2)$-orbit of basic triples.
\end{defn}
The $\Aut(\FF_2)$-action on basic triples induces an $\Out(\FF_2)$-action on ordered superbases. This action is transitive.

The symmetric group $\mathfrak{S}_3$ over $3$ symbols acts on the space of basic triples by the action generated by:
\begin{eqnarray*}
	(X,Y,Z)&\overset{(12)}\longmapsto&(Y^{-1},X^{-1},Z^{-1});\\
	(X,Y,Z)&\overset{(23)}\longmapsto&(X^{-1},Z^{-1},Y^{-1}).
\end{eqnarray*}
This $\mathfrak{S}_3$-action on basic triples commutes with the ${\Inn}^{\mathfrak{e}}(\FF_2)$-action, therefore it induces an $\mathfrak{S}_3$-action on ordered superbases.

\begin{defn}
	 An {\textit{unordered superbasis}} (or a {\textit{superbasis}} for short) of $\FF_2$ is an  $\mathfrak{S}_3$-orbit of ordered superbases. Two distinct unordered superbases are said to be {\textit{neighbors}} if they have representatives sharing two elements up to taking inverse.
\end{defn}

\begin{ex}
	The superbasis corresponding to $(X,Y,(XY)^{-1})$ and that corresponding to $(X,Y^{-1},YX^{-1})$ are neighbors.
\end{ex}
\begin{rmk}
	To simplify the notation, in the reminder of the paper, we will use $(X,Y,Z)$ to denote both an ordered triple and its corresponding superbasis. The meaning will be clear by the context.
\end{rmk}
A graph can be constructed from the  superbases of $\FF_2$. The vertices are the superbases and two vertices are connected by an edge if the corresponding superbases are neighbors. This graph is a trivalent tree. We call it the \textbf{\textit {tree of superbases}} and denote it by $\Sigma$. For our convenience, we also consider the metric on $\Sigma$ induced by setting each edge with length $1$. 

\begin{rmk}
	From the discussion, we can see that the group $\Out(\FF_2)$ is isomorphic to
	\begin{equation*}
		\mathbb{Z}_2\times((\mathbb{Z}_2\ast\mathbb{Z}_2\ast\mathbb{Z}_2)\rtimes\mathfrak{S}_3),
	\end{equation*}
	where $\mathbb{Z}_2=\mathbb{Z}/2\mathbb{Z}$ in the first factor corresponds to the elliptic involution.
\end{rmk}

\subsection{\texorpdfstring{$\SLtwoC$}{Lg}-character variety of \texorpdfstring{$\FF_2$}{Lg}}
Recall that the $\SLtwoC$-character variety $\chi$ of $\FF_2$ is the space of conjugacy classes of homomorphisms from $\FF_2$ to $\SLtwoC$. A classical result of Fricke and Vogt about $\chi$ is stated as follows:
\begin{thm}[Fricke \cite{fricke1896}, Vogt \cite{vogt1889}]
	Let $f:\SLtwoC\times\SLtwoC\rightarrow\CC$ be a regular function which is invariant under the diagonal action of $\SLtwoC$ by conjugation. There exists a polynomial function $F(x,y,z)\in\CC[x,y,z]$ such that: for any pair $(\xi,\eta)\in\SLtwoC\times\SLtwoC$, we have
	\begin{equation}
	f(\xi,\eta)=F(\mathrm{tr}(\xi),\mathrm{tr}(\eta),\mathrm{tr}(\xi\eta)).
	\end{equation}
	Furthermore, for all $(x,y,z)\in\CC^3$, there exists $(\xi,\eta)\in\SLtwoC\times\SLtwoC$ such that
	\begin{displaymath}
	\left[\begin{array}{l}
	x\\
	y\\
	z
	\end{array}\right]
	=\left[\begin{array}{l}
	\mathrm{tr}(\xi)\\
	\mathrm{tr}(\eta)\\
	\mathrm{tr}(\xi\eta)
	\end{array}\right].
	\end{displaymath}
	Conversely, if $x^2+y^2+z^2-xyz\neq 4$ and $(\xi,\eta)$, $(\xi',\eta')\in\SLtwoC\times\SLtwoC$ satisfy,
	\begin{displaymath}
	\left[\begin{array}{l}
	x\\
	y\\
	z
	\end{array}\right]
	=\left[\begin{array}{l}
	\mathrm{tr}(\xi)\\
	\mathrm{tr}(\eta)\\
	\mathrm{tr}(\xi\eta)
	\end{array}\right]
	=\left[\begin{array}{l}
	\mathrm{tr}(\xi')\\
	\mathrm{tr}(\eta')\\
	\mathrm{tr}(\xi'\eta')
	\end{array}\right],
	\end{displaymath}
	Then, there exists an element $g\in\SLtwoC$ such that $(\xi,\eta)=(g\xi'g^{-1},g\eta'g^{-1})$.
\end{thm}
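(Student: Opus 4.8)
The plan is to prove the three assertions — generation of the invariant algebra, surjectivity of the trace map, and rigidity of irreducible characters — in turn, using the $\SLtwoC$ trace identities as the common tool. I would first record, directly from the Cayley--Hamilton relation $A^2-(\tr A)A+I=0$ valid for $A\in\SLtwoC$, the two basic identities $\tr(A^{-1})=\tr(A)$ and the linearization $\tr(AB)+\tr(AB^{-1})=\tr(A)\tr(B)$. An induction on word length, using the linearization to eliminate inverse letters and repeated syllables, then shows that the trace of an \emph{arbitrary} word in $\xi$ and $\eta$ is a polynomial in $x=\tr(\xi)$, $y=\tr(\eta)$, $z=\tr(\xi\eta)$. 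Granting the classical first fundamental theorem of invariant theory, namely that the algebra of regular functions on $\SLtwoC\times\SLtwoC$ invariant under diagonal conjugation is generated by traces of words, this reduction immediately yields the first assertion: any such $f$ equals $F(x,y,z)$ for some $F\in\CC[x,y,z]$.

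For surjectivity I would exhibit an explicit pair realizing a prescribed triple $(x,y,z)\in\CC^3$. Setting
\[
\xi=\begin{pmatrix} x & -1\\ 1 & 0\end{pmatrix},\qquad
\eta=\begin{pmatrix} 0 & \omega\\ -\omega^{-1} & y\end{pmatrix},
\]
where $\omega$ is a (nonzero) root of $\omega^2-z\omega+1=0$, a direct computation gives $\det\xi=\det\eta=1$, $\tr(\xi)=x$, $\tr(\eta)=y$, and $\tr(\xi\eta)=\omega+\omega^{-1}=z$. Since such an $\omega$ always exists over $\CC$, this establishes the middle assertion.

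For rigidity I would first derive the Fricke commutator identity $\tr(\xi\eta\xi^{-1}\eta^{-1})=x^2+y^2+z^2-xyz-2$ from the same trace identities, so that the hypothesis $x^2+y^2+z^2-xyz\neq4$ is exactly the condition $\tr(\xi\eta\xi^{-1}\eta^{-1})\neq2$. I would then prove the dichotomy that this commutator trace equals $2$ if and only if $\xi$ and $\eta$ share a common eigenvector, equivalently the group they generate has a common fixed point in $\partial\HHH$: the reducible direction is immediate after conjugating a common eigenvector to be a coordinate axis, while the converse uses that a commutator of trace $2$ is either the identity or parabolic and that its fixed point must then be fixed by both generators. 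Under the hypothesis both given pairs are therefore irreducible, and it remains to show that an irreducible pair is conjugate to a normal form depending only on $(x,y,z)$. Here I split on the Jordan type of $\xi$ (which is not $\pm I$ by irreducibility): if $\xi$ is diagonalizable I conjugate it to $\mathrm{diag}(\lambda,\lambda^{-1})$ with $\lambda\neq\lambda^{-1}$, use irreducibility to force both off-diagonal entries of $\eta$ to be nonzero, normalize them with the residual diagonal torus, and read off the remaining entries from $\tr(\eta)=y$ and $\tr(\xi\eta)=z$; if $\xi$ is a single Jordan block I conjugate it to $\pm\begin{pmatrix} 1 & 1\\ 0 & 1\end{pmatrix}$ and spend the residual unipotent freedom similarly. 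Two irreducible pairs with the same $(x,y,z)$ then reduce to the same normal form and so are conjugate by some $g\in\SLtwoC$.

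The main obstacle is twofold. The deepest external input is the first fundamental theorem underlying the generation statement; to stay self-contained one may instead note, once rigidity is in hand, that the trace map identifies conjugacy classes of irreducible pairs with the dense open set $\{x^2+y^2+z^2-xyz\neq4\}\subset\CC^3$, so that any invariant regular function descends to a function of $(x,y,z)$ there, which combined with the word-trace reduction forces it to be polynomial. The more delicate elementary point is the converse half of the reducibility dichotomy — verifying that a commutator of trace $2$ genuinely forces a common fixed point — together with the careful bookkeeping of the leftover conjugation freedom and the attendant sign and eigenvalue-swap ambiguities in the normal-form argument.
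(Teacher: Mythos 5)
First, a point of comparison: the paper does not prove this theorem at all — it is quoted as a classical result of Fricke and Vogt, and the paper immediately uses it to define the trace coordinates on $\chi$; proofs in the modern literature can be found, e.g., in Goldman's survey \cite{goldman2009ems}, which the paper cites elsewhere. Measured against that classical argument, your outline is essentially the standard proof and is correct. The word-length induction from $\tr(A^{-1})=\tr(A)$ and $\tr(AB)+\tr(AB^{-1})=\tr(A)\tr(B)$ is the usual reduction; your explicit pair does realize $(x,y,z)$ (indeed $\xi\eta$ is upper triangular with diagonal entries $\omega^{-1},\omega$, so $\tr(\xi\eta)=\omega+\omega^{-1}=z$); and the normal-form argument for irreducible pairs is the standard rigidity proof, with the eigenvalue-swap and sign ambiguities you flag being genuine but routine. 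Two caveats deserve emphasis. (i) The generation statement really does rest on the first fundamental theorem of invariant theory — that invariant regular functions are generated by word traces — which you rightly identify as the deep external input; your proposed workaround is shakier than it sounds, since descending $f$ to the open set $U=\{x^2+y^2+z^2-xyz\neq4\}$ only gives a set-theoretic function a priori, and even after establishing regularity on $U$ one must still rule out denominators, i.e., show that no power of the irreducible polynomial $x^2+y^2+z^2-xyz-4$ appears, using surjectivity of the trace map onto the complementary hypersurface. (ii) Your fixed-point sketch for the converse of the reducibility dichotomy (commutator trace $2$ implies a common eigenvector) is the weakest step as written — the clean route is the computation $\tr([\xi,\eta])=2-bc(\lambda-\lambda^{-1})^2$ after conjugating $\xi$ to diagonal form with eigenvalues $\lambda^{\pm1}$ and letting $b,c$ denote the off-diagonal entries of $\eta$ — but note that this converse is never actually needed: the rigidity assertion only requires the easy direction (reducible implies commutator trace $2$), whose contrapositive already makes both pairs irreducible under the hypothesis $x^2+y^2+z^2-xyz\neq4$.
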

Fixing a superbasis $(X,Y,Z)$, we can define a map 
\begin{equation*}
	\Phi:\Hom(\FF_2,\SLtwoC)\rightarrow\CC^3,
\end{equation*}
sending $\rho$ to $(x,y,z)=(\tr\rho(X),\tr\rho(Y),\tr\rho(Z))$.
\begin{defn}
	A representation $\rho:\FF\rightarrow\SLtwoC$ is said to be {\textit{reducible}} if its image preserves a non-trivial proper subspace when acting on $\CC^2$. A representation is said to be {\textit{irreducible}} if it is not reducible.
\end{defn}
It is well-known that a representation $\rho$ is reducible if and only if it is $\SLtwoC$-conjugate to a representation by upper triangular matrices in $\SLtwoC$. By using Fricke's trace identity, we have:
 	\begin{equation}\label{trace}
 	\mathrm{tr}([\xi,\eta])=\mathrm{tr}^2(\xi)+\mathrm{tr}^2(\eta)+\mathrm{tr}^2(\xi\eta)-\mathrm{tr}(\xi)\mathrm{tr}(\eta)\mathrm{tr}(\xi\eta)-2.
 	\end{equation}
Moreover, Nielson \cite{nielsen1917mathann} showed that all commutators of pairs of free generators of $\FF_2$ are conjugate to each other up to taking inverse. Therefore, we conclude that a representation is reducible if and only if its trace of commutator of a pair of generators equals $2$. By the above theorem, two irreducible representations $\rho_1$ and $\rho_2$ are conjugate to each other if and only if $\Phi(\rho_1)=\Phi(\rho_2)$.

\begin{rmk}
	Using this coordinate system, we can see that the elliptic involution acts trivially on the $\chi$. The $\mathfrak{S}_3$ acts as permutations of the three coordinates. Hence to see the $\Out(\FF_2)$-action on $\chi$, it is enough to study the $\ZZ_2\ast\ZZ_2\ast\ZZ_2$-action. This explains why we are interested in the tree of superbasis.
\end{rmk}

\subsection{Bowditch's Q-conditions}
In this section, we recall the work of Bowditch \cite{bowditch1998plms} and its generalization by Tan-Wong-Zhang \cite{tan-wong-zhang2008advm}.

Let us consider $\Sigma$ the trivalent tree of superbases associated to $\FF_2$. Let $V(\Sigma)$ and $E(\Sigma)$ denote the vertex set  and the edge set of $\Sigma$ respectively. It admits a proper embedding in the Poincar\'e disk $\DD$ as the dual graph of Farey tessellation $\mathcal{F}$ of $\DD$.

More precisely, the conformal map from $\DD$ to the upper half plane model of hyperbolic plane induces an identification of the boundary $S^1$ of $\DD$ with $\RR\cup\infty$. A point on $S^1$ is called {\textit{rational}} if it is mapped to $p/q$ under this identification, where $p$ and $q$ are coprime integers. As a convention $0$ is considered as $0/1$ and $\infty$ is considered as $1/0$. Two rational points $p/q$ and $s/t$ are said to be {\textit{Farey neighbors}} if $|pt-qs|=1$. The {\textit{Farey tessellation}} $\mathcal{F}$ of $\DD$ is given by connecting every pair of Farey neighbors by geodesics. It is an ideal triangulation of $\DD$. Therefore the dual graph is an infinite trivalent tree. 

Given any element $W$ of $\FF_2$, we denote by $[W]$ its $\Inn^\mathfrak{e}(\FF_2)$-orbit. We denote by $\Omega$ the set of all $[W]$ with $W$ primitive. There is an 1-1 correspondence between rational points on $S^1$ and $\Omega$ described in the following way. Let $(X_0,Y_0)$ be a basis of $\FF_2$. Then any primitive element can be written as a word of $X_0$ and $Y_0$ with integer powers. Consider the abelianzation of $\FF_2$ to $\ZZ^2$ sending $X_0$ to $(1,0)$ and $Y_0$ to $(0,1)$. Any primitive element becomes $(q,p)$ with $p$ and $q$ coprime. A theorem of Nielsen \cite{nielsen1917mathann} tells us that two primitive elements of $\FF_2$ are conjugate if and only if they have the same image in $\ZZ^2$. Therefore the rational number $p/q$ determines an $\Inn^\mathfrak{e}(\FF_2)$-orbit of a primitive element in $\FF_2$. This induces an identification between rational points on $S^1$ and $\Omega$. 

We say that two orbits $[X]$ and $[Y]$ are {\textit{Farey neighbors}} if their corresponding rational points are Farey neighbors. It is easy to check the following two facts:
\begin{enumerate}
	\item $[X]$ and $[Y]$ are Farey neighbors if and only if we can find $X'\in[X]$ and $Y'\in[Y]$ such that $(X',Y')$ is a basis of $\FF_2$;
	\item $[X]$, $[Y]$ and $[Z]$ are vertices of an ideal triangle in the complement of $\mathcal{F}$ if and only if we can find three elements, one in each orbit, which form an ordered triple.
\end{enumerate}
Hence, an ideal triangle in the complement of $\mathcal{F}$ corresponds to a unique superbasis of $\FF_2$. As a result, we find a map identifying the dual tree of $\mathcal{F}$ with the tree of superbases $\Sigma$. To simplify the notation, the dual tree of $\mathcal{F}$ will also be denoted by $\Sigma$.

\begin{rmk}
	The embedding of $\Sigma$ in $\mathbb{D}$ depends on the choice of $(X_0,Y_0)$. We will fix one embedding in the reminder of this paper.
\end{rmk}

Consider the complement of $\Sigma$ in $\DD$. Each connected component is bounded by a bi-infinite geodesic on $\Sigma$ asymptotic to a same rational point on $S^1$ along both directions. This gives a 1-1 correspondence between connected components of $\DD\setminus\Sigma$ and rational points on $S^1$, which moreover induces a labeling of connected components by $\Inn^\mathfrak{e}(\FF_2)$-orbits of primitive elements of $\FF_2$. Then each edge $e\in E(\Sigma)$ can be labeled by $(X,Y;Z,Z')$ where $[X]$ and $[Y]$ are the two connected components adjacent to $e$ and $[Z]$ and $[Z']$ are the two connected components only meeting $e$ at vertices. 
\begin{center}
	\includegraphics[scale=0.3]{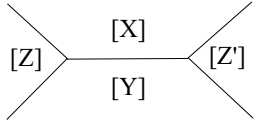}
	\captionof{figure}{Labelling an edge}
\end{center}

A representation $\rho$ induces a function defined on $\Omega$ sending $[X]$ to $x=\mathrm{tr}\rho(X)$ which we call the {\textit{trace function}} $f_\rho$ for $\rho$. The $f_\rho$-values of three regions meeting at any vertex satisfy the following relation:
\begin{equation}\label{commutator}
	x^2+y^2+z^2=xyz+\mu,
\end{equation}
where $\mu=2+\mathrm{tr}\rho([X,Y])$ for a basis $(X,Y)$.

Given four elements corresponding to an edge, the traces of their images under $\rho$ satisfy the following relation:
\begin{eqnarray}
	z'+z&=&xy,\\\label{growth}
	z'z&=&x^2+y^2-\mu
\end{eqnarray}
This allows us to  compute the traces of all primitive elements inductively from the trace of one superbasis.

A representation is said to be {\textit{type preserving}} if $\mu=0$. Let $\rho$ be a type preserving representation. In \cite{bowditch1998plms}, Bowditch studied the growth rate of $f_\rho$ when computing the traces of primitive elements using the above inductive method. In particular, he was interested in comparing $f_\rho$ with the Fibonacci function defined as follows.

Let $e=(X,Y;Z,Z')$ be a fixed edge of the tree $\Sigma$. We set $F_e([X])=F_e([Y])=1$ and $F_e([Z])=F_e([Z'])=2$. Then for a vertex $v_n=(X_n,Y_n,Z_n)$ at distance $n$ to $e$, we set $F_e([Z_n])=F_e([X_n])+F_e([Y_n])$ if the distance from $Z_n$ to $e$ is greater than the other two distances. In this way, we define a function $F_e:\Omega \rightarrow \NN$ which is called the {\textit{Fibonacci function}} with respect to the edge $e$.
\begin{center}
	\includegraphics[scale=0.4]{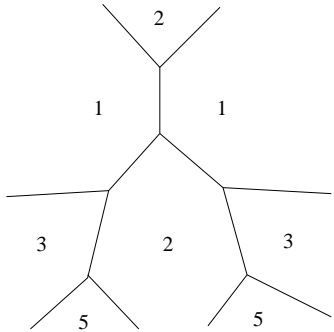}
	\captionof{figure}{Fibonacci function}
\end{center}

Let $f:\Omega\rightarrow[0,+\infty)$ be any function.
\begin{defn}
 The function $f$ is said to have an {\textit{upper Fibonacci bound}} if there exists a constant $K>0$ such that $f([X])\le KF_e([X])$ for all $[X]\in\Omega$;

 The function $f$ is said to have a {\textit{lower Fibonacci bound}} if there exists a constant $k>0$ such that $f([X])\ge kF_e([X])$ for all but finitely many $[X]\in\Omega$;

 The function $f$ is said to have a {\textit{Fibonacci growth}} if it has both lower and upper Fibonacci bounds.
\end{defn}
\begin{rmk}
	The Fibonacci function $F_e$ with respect to $e$ has Fibonacci growth with respect to the Fibonacci function $F_{e'}$ with respect to any other edge $e'$. Hence, the above definition is independent of the choice of $e$.
\end{rmk}
Let $f_\rho$ be the trace function for a type-preserving representations $\rho$. In \cite{bowditch1996blms} Bowditch proved:
\begin{thm}\label{BQ}
	The function $f_\rho$ satisfies:
	\begin{enumerate}
		\item $f_\rho^{-1}([-2,2])=\emptyset$;
		\item $|f_\rho|^{-1}([0,2])$ is finite.
	\end{enumerate} 
	if and only if the function $\log^+|f_\rho|$ has Fibonacci growth, where $|\cdot|$ stands for taking the modulus of a complex number and $\log^+:=\max\{0,\log\}$.
\end{thm}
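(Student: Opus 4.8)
The plan is to prove the two implications separately, in each case working on the tree $\Sigma$ with the edge relations attached to an edge $(X,Y;Z,Z')$, namely $z+z'=xy$ and $zz'=x^2+y^2$ (recall $\mu=0$ for a type-preserving $\rho$), together with the vertex relation (\ref{commutator}). The point is that these relations compute traces inductively as one moves across $\Sigma$, and the Fibonacci function $F_e$ is built from exactly the combinatorial shadow of this recursion, so matching $\log^+|f_\rho|$ to $F_e$ amounts to matching the analytic recursion to its combinatorial counterpart.

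\emph{Fibonacci growth $\Rightarrow$ (1) and (2).} Condition (2) is immediate: the lower Fibonacci bound gives $\log^+|f_\rho([X])|\ge kF_e([X])$ off a finite set, and since $F_e([X])\to\infty$ whenever $[X]$ leaves a finite subtree, $|f_\rho([X])|\to\infty$, so $\{[X]:|f_\rho([X])|\le 2\}$ is finite. For (1) I argue by contradiction. Suppose some region $X$ has $x:=f_\rho([X])\in[-2,2]$, and let $(Y_n)_{n\in\ZZ}$ be the regions opposite to $X$ along the bi-infinite path of $\Sigma$ bounding $X$. Each such edge is labelled $(X,Y_n;Y_{n-1},Y_{n+1})$, so $z+z'=xy$ becomes the constant-coefficient linear recursion $y_{n+1}=x\,y_n-y_{n-1}$, with characteristic roots $\lambda^{\pm1}$ satisfying $\lambda+\lambda^{-1}=x$. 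As $x\in[-2,2]$ we have $|\lambda|=1$, so $(y_n)$ stays bounded when $x\in(-2,2)$ and grows at most linearly in $|n|$ when $x=\pm2$; in both cases $\log^+|f_\rho([Y_n])|=O(\log|n|)$. But the $Y_n$ are distinct and leave every finite subtree, so $F_e([Y_n])\to\infty$, contradicting the lower Fibonacci bound. Hence $f_\rho^{-1}([-2,2])=\emptyset$.

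\emph{(1) and (2) $\Rightarrow$ Fibonacci growth.} By (2) the core $\Omega_0=\{[X]:|f_\rho([X])|\le 2\}$ is finite. The program is: (i) orient each edge toward the larger of the two traces $z,z'$ and show, using (1) and (2), that the small-trace set spans a finite connected subtree $T$ from which every edge eventually points away and along which traces strictly increase; (ii) prove a growth lemma: for regions far from $T$, where $|y|,|z|$ are large, the region $W$ reached by moving away (so $w=yz-x$, equivalently $xw=y^2+z^2$) satisfies $\tfrac12|y||z|\le|w|\le 2|y||z|$, since the orientation gives $|x|\le|w|$, whence $|x|^2\le|y|^2+|z|^2$ and $|x|\le|y|+|z|\ll|y||z|$ kills the cross term. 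This yields $\bigl|\log^+|f_\rho([W])|-\log^+|f_\rho([Y])|-\log^+|f_\rho([Z])|\bigr|\le C$, the exact analogue of $F_e([W])=F_e([Y])+F_e([Z])$. I then induct on the distance to $T$: the upper bound $\log^+|f_\rho([X])|\le KF_e([X])$ follows from $|w|\le|y||z|+|x|\le 2|y||z|$ with the ansatz $\log^+|f_\rho|\le KF_e-A$, the merge absorbing the additive error because $F_e$ at least doubles; the matching lower bound $\log^+|f_\rho([X])|\ge kF_e([X])$ holds for all but finitely many $[X]$ (the finitely many regions adjacent to the core, where one input trace is small, are the exceptions).

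\textbf{Main obstacle.} The crude per-step bounds are additive only once the traces are large; near the threshold the product $yz$ can nearly cancel $x$ (equivalently $y^2+z^2$ can be small), and a single trace of $[-2,2]$ type would trap an entire boundary sequence in a bounded annulus. The substantive step is therefore (i): showing the Q-conditions force the small-trace locus to be connected and finite and the flow to escape to infinity with strictly increasing modulus, so that no infinite family of regions has trace trapped in a bounded set $\{|f_\rho|\le R\}$. This is exactly where both conditions are used in an essential, non-formal way and is the technical heart of the argument; granting it, the growth lemma, the two inductions, and the reverse implication above are comparatively routine.
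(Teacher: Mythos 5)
Your proposal should first be set against what the paper actually does: the paper offers no proof of Theorem \ref{BQ} at all, but quotes it from Bowditch \cite{bowditch1996blms}, \cite{bowditch1998plms} (with the non-type-preserving generalization in \cite{tan-wong-zhang2008advm}), so the only meaningful comparison is with Bowditch's argument, which is exactly the argument your outline shadows: the trace recursion $y_{n+1}=xy_n-y_{n-1}$ along the boundary of a region, an edge orientation on $\Sigma$ by comparison of trace moduli, a finite attracting core, and a multiplicative growth estimate away from it. Your forward implication (Fibonacci growth $\Rightarrow$ (1) and (2)) is correct and essentially complete; the one point to make explicit is that in the case $x=\pm2$ you need $F_e([Y_n])$ to grow at least linearly in $|n|$ rather than merely tend to infinity in order to contradict the $O(\log|n|)$ bound --- this is true and easy, since once past $e$ the Fibonacci rule gives $F_e([Y_{n+1}])=F_e([Y_n])+F_e([X])$, but your sentence as written does not close that case.

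The backward implication, however, contains a genuine gap, and it is the one you flag yourself: step (i), the existence of a finite connected subtree $T$ from which every edge points away with strictly increasing trace moduli, is asserted and then ``granted''. This step is not a preliminary that can be deferred as ``the technical heart''; it essentially \emph{is} the theorem. It is precisely the compact-attracting-subtree statement (Proposition \ref{attractingsubtree1} of this paper, itself quoted from \cite{tan-wong-zhang2008advm}), and in Bowditch's papers its proof occupies the bulk of the work: one needs (a) a vertex-level lemma, extracted from the relation $x^2+y^2+z^2=xyz$, saying that at a vertex whose three traces all have modulus greater than $2$ at most one of the three adjacent edges can be descending, so that descending paths do not branch; (b) an argument excluding infinite descending rays, which splits into the case where the ray eventually runs along the boundary of a single region $X$ (here the linear recursion forces $|y_n|\to 0$ unless $x\in[-2,2]$, so both (1) and (2) are used) and the case where it crosses infinitely many regions, which requires a further accumulation argument; and (c) connectedness and finiteness of the resulting sink locus. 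None of (a)--(c) is formal bookkeeping, and your growth lemma (ii) --- which is correct, and whose upper-bound half in fact holds for arbitrary representations without any hypothesis --- cannot even be applied until (i) supplies the base of the induction and the monotone structure to induct along. As it stands, your proposal proves the easy implication and the routine half of the hard one, and leaves precisely Bowditch's core contribution unproved.
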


In \cite{tan-wong-zhang2008advm}, Tan, Wong and Zhang generalized Bowditch's work by dropping the type preserving restriction. In particular, they proved that the above theorem is true for any representation $\rho$ with $\mu\neq4$.

\begin{defn}
	An irreducible representation $\rho$ satisfies {\textit{BQ-conditions}} if its trace function $f_\rho$ satisfies the following two conditions:
	\begin{enumerate}
		\item $f_\rho^{-1}([-2,2])=\emptyset$;
		\item $|f_\rho|^{-1}([0,2])$ is finite.
	\end{enumerate}
\end{defn}

\section{Edge orientations on \texorpdfstring{$\Sigma$}{Lg} and corresponding attracting subtrees}
\subsection{Basic Definitions}
Let us consider the tree of superbasis $\Sigma$.
\begin{defn}
	We say that $\Sigma$ is equipped with an {\textit{edge orientation}} if each of its edges is equipped with an orientation.
\end{defn}
One way to give an edge orientation on $\Sigma$ can be described as follows. Denote by $\widetilde{\Omega}$ the set:
\begin{equation*}
	\widetilde{\Omega}:=\{([W],v)\in\Omega\times V(\Sigma)\mid \textrm{$v$ is a vertex on the boundary of $[W]$}\}.
\end{equation*}  
Let $f$ be a function on $\widetilde{\Omega}$ with values in $\RR_{\ge0}$. It induces an edge orientation on $\Sigma$ in the following way: for the edge $e=(X,Y;Z,Z')$ with vertices $v=(X,Y,Z)$ and $v'=(X,Y,Z')$, 
\begin{itemize}
	\item if $|f([Z],v)|>|f([Z'],v')|$,  $e$ is oriented from $v$ to $v'$;
	\item if $|f([Z],v)|<|f([Z'],v')|$,  $e$ is oriented from $v'$ to $v$;
	\item if $|f([Z],v)|=|f([Z'],v')|$, we choose one of the two orientations arbitrarily.
\end{itemize}
\begin{rmk}
	A function on $\Omega$ induces a function on $\widetilde{\Omega}$ by pre-composing the projection from $\widetilde{\Omega}$ to $\Omega$. In this case, we call the latter  the {\textit{lift}} of the former.
\end{rmk}
\begin{rmk}
	The edge orientation induced by $f$ is not  unique if the equality holds at some edges. Below we will see that in our case the equality holds only at finitely many edges and the induced orientation is unique up to finitely many edges.
\end{rmk}
\begin{defn}
		A subgraph $\Sigma_0$ of $\Sigma$ is called an {\textit{attracting subtree}} of $\Sigma$ for the given edge orientation if it satisfies the following two conditions:
	\begin{itemize}
		\item it is connected;
		\item Every edge $e$ not contained in $\Sigma_0$ is oriented towards $\Sigma_0$.
	\end{itemize}
\end{defn}

\begin{rmk}
	Since $\Sigma$ is simply connected, for any edge orientation, the minimal attracting subtree, if it exists, is unique.
\end{rmk}

\subsection{Three edge orientations induced by an irreducible representation}
In this subsection, we show that an irreducible representation $\rho$ induces three functions on $\widetilde{\Omega}$ with values in $\RR_{\ge0}$ which in turn induces three edge orientations on $\Sigma$. The first two are the lifts of functions on $\Omega$, while the third one is not. In the following, we discuss them case by case.

\medskip

\noindent(I) \textit{The modulus of traces function.} To define the first function, we consider the trace function $f_\rho$ induced by $\rho$ and lift it to a function defined on $\widetilde{\Omega}$ which is denoted by $\widetilde{f}_\rho$. By post-composing by the modulus function on $\CC$, we obtain the {\textit{function of modulus of traces}} $|\widetilde{f}_\rho|$ on $\widetilde{\Omega}$. The corresponding edge orientation was first studied by Bowditch \cite{bowditch1998plms} for the case where $\rho$ is type preserving. By generalizing a result of Bowditch, Tan-Wong-Zhang proved the following result:

\begin{prop}[Tan-Wong-Zhang \cite{tan-wong-zhang2008advm}]\label{attractingsubtree1}

	If an irreducible representation $\rho$ satisfies the BQ-conditions, then there exists a compact attracting subtree in $\Sigma$ with respect to the edge orientation induced by $|\widetilde{f}_\rho|$. 
\end{prop}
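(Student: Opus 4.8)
The plan is to derive the statement from the Fibonacci-growth reformulation of the BQ-conditions and then convert ``attracting subtree'' into a monotonicity statement for $|f_\rho|$ along $\Sigma$. First I would apply Theorem~\ref{BQ} together with its Tan--Wong--Zhang generalization: since $\rho$ is a BQ-representation, $\log^+|f_\rho|$ has Fibonacci growth, so there are constants $0<k\le K$ with
\[
kF_e([W])\ \le\ \log^+|f_\rho([W])|\ \le\ KF_e([W])
\]
for all but finitely many $[W]\in\Omega$. As $F_e([W])\to\infty$ when $[W]$ recedes from the base edge, this forces $|f_\rho([W])|\to\infty$; in particular every sublevel set $\{[W]:|f_\rho([W])|\le C\}$ is finite, the exceptional set $|f_\rho|^{-1}([0,2])$ is finite, and the finitely many edges at which the defining inequality for the orientation degenerates to an equality all lie in a bounded part of $\Sigma$.

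Since $\Sigma$ is a locally finite tree with unit edges, ``compact'' means ``finite.'' A finite connected subtree $\Sigma_0$ is attracting exactly when every edge $e\notin\Sigma_0$ is oriented so that its head is the endpoint nearer $\Sigma_0$. Writing such an exterior edge as $(X,Y;Z,Z')$ with $v'=(X,Y,Z')$ on the side away from $\Sigma_0$, the region $[Z']$ is the outward mediant and has strictly larger Fibonacci value than $[Z]$; so by the orientation rule the whole problem reduces to the single monotonicity claim that $|f_\rho([Z'])|>|f_\rho([Z])|$ whenever $[Z']$ is an outward mediant sufficiently far from the core. Granting this, I would take $\Sigma_0$ to be the convex hull of the finite set of edges at which the strict inequality fails (a bounded set, containing the exceptional set and all degenerate edges); then every exterior edge satisfies the inequality with the outward region larger, hence points inward, and $\Sigma_0$ is a finite, thus compact, attracting subtree.

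The analytic heart is this monotonicity claim, and the key point is that it \emph{cannot} be proved from the Fibonacci bounds alone: along an end of bounded continued-fraction type the ratio of successive Fibonacci values stays bounded, so the constant mismatch $k\neq K$ leaves $kF_e([Z'])>KF_e([Z])$ undecided. Instead I would argue directly from the edge relations $z+z'=xy$ and $zz'=x^2+y^2-\mu$. For an outward edge the two persisting regions $[X],[Y]$ have large modulus, and from $z'=xy-z$ one gets $|f_\rho([Z'])|\ge|x||y|-|z|$. The substance is to produce a threshold $T_0\ge2$, depending only on $\mu$ and the finite exceptional set, such that as soon as $|x|,|y|\ge T_0$ the two relations force the larger of $|z|,|z'|$ to exceed $\max(|x|,|y|)$ and to be the outward one, yielding strict increase. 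Making this uniform---ruling out ``valleys,'' i.e. bi-infinite strings of non-increasing edges, which genuinely occur for non-BQ representations, and handling the finitely many borderline vertices adjacent to the exceptional set---is where the Q-conditions are indispensable and is the step I expect to be the main obstacle. This is precisely the combinatorial content of Bowditch's lemmas and their Tan--Wong--Zhang generalization, which I would invoke to finish.
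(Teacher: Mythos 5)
The paper itself contains no proof of this proposition: it is stated as a quoted result of Tan--Wong--Zhang \cite{tan-wong-zhang2008advm}, and your proposal --- after a sound reduction showing that Fibonacci growth gives finite sublevel sets of $|f_\rho|$, so that only eventual edge-wise monotonicity of $|f_\rho|$ via the relations $z+z'=xy$ and $zz'=x^2+y^2-\mu$ is at issue --- finishes by invoking exactly the Bowditch/Tan--Wong--Zhang combinatorial lemmas for that monotonicity, so both you and the paper ultimately rest on the same external citation. Read as a self-contained argument your final step would be circular, since the machinery you appeal to (propagating monotonicity outward and excluding descending ``valleys'') is the substance of the cited result itself; but as a reconstruction of how that cited argument runs, your outline is accurate and contains no false steps.
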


\medskip

\noindent(II) \textit{The real translation distance function.} Consider the real translation distance function $a_\rho$ defined as follows:
\begin{eqnarray*}
        a_\rho:\Omega&\rightarrow&\RR_{\ge0}\\
       \left[W\right]&\mapsto& a(\rho(W))	
\end{eqnarray*}
where $a(\rho(W))$ is the real translation distance of $\rho(W)$ in $\HHH$ defined in the previous section. Its lift $\widetilde{a}_\rho$ is the {\textit{real translation distance function}} on $\widetilde{\Omega}$.

By our discussion in the previous section, the trace $\tr(\xi)$ of a loxodromic element $\xi\in\SLtwoC$ is
\begin{equation*}
	-2\cosh\frac{a(\xi)+i\alpha(\xi)}{2},
\end{equation*}  
where $a(\xi)+i\alpha(\xi)$ is the complex translation distance of $\xi$. Notice that for a pair of positive number $a$ and $\alpha$, the difference
\begin{equation*}
	\left|2\cosh\frac{a+i\alpha}{2}\right|-\exp\frac{a}{2}
\end{equation*}
is $O(\exp(-\frac{a}{2}))$ as $a$ goes to infinity. Therefore Proposition \ref{attractingsubtree1} above implies a similar result for $\widetilde{a}_\rho$:

\begin{prop}\label{attractingsubtree2}
	If an irreducible representation $\rho$ satisfies the BQ-conditions, then there exists a compact attracting subtree in $\Sigma$ with respect to the edge orientation induced by $\widetilde{a}_\rho$.
\end{prop}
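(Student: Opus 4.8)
The plan is to derive Proposition~\ref{attractingsubtree2} directly from Proposition~\ref{attractingsubtree1} by exploiting the quantitative comparison between $|\widetilde{f}_\rho|$ and $\widetilde{a}_\rho$ that is already set up in the paragraph preceding the statement. The key point is that both functions are built from the same complex translation distances, so the two induced edge orientations must agree away from a compact region, and hence the two attracting subtrees differ by at most finitely much. Since Proposition~\ref{attractingsubtree1} gives a \emph{compact} attracting subtree $\Sigma_0$ for $|\widetilde{f}_\rho|$, a compact attracting subtree for $\widetilde{a}_\rho$ can be obtained by enlarging $\Sigma_0$ to absorb the finitely many edges where the two orientations might disagree.

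First I would make the comparison precise. For a loxodromic $\xi = \rho(W)$ with complex translation distance $a(\xi)+i\alpha(\xi)$, formula~(\ref{tracerotation}) gives $|\widetilde{f}_\rho([W],v)| = |2\cosh\tfrac{a(\xi)+i\alpha(\xi)}{2}|$, while $\widetilde{a}_\rho([W],v) = a(\xi)$. The stated estimate says
\begin{equation*}
\Big|\,2\cosh\tfrac{a+i\alpha}{2}\,\Big| = \exp\tfrac{a}{2} + O\!\left(\exp(-\tfrac{a}{2})\right)
\end{equation*}
as $a\to\infty$. Thus on any region where the traces are large, $|\widetilde{f}_\rho|$ and $\exp(\tfrac12\widetilde{a}_\rho)$ are comparable up to an exponentially small error; since $t\mapsto\exp(t/2)$ is strictly increasing, comparing values of $|\widetilde{f}_\rho|$ across an edge is, in this large-trace regime, equivalent to comparing values of $\widetilde{a}_\rho$. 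The next step is to observe that the BQ-conditions force the traces to be large outside a compact part of $\Sigma$: condition~(2), that $|f_\rho|^{-1}([0,2])$ is finite, together with the Fibonacci growth guaranteed by Theorem~\ref{BQ}, means that $|\widetilde{f}_\rho|$ (and hence $\widetilde{a}_\rho$) tends to infinity along every end of $\Sigma$, and all but finitely many edges carry arbitrarily large trace values on both adjacent regions.

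I would then argue the orientations coincide cofinitely. Fix the compact attracting subtree $\Sigma_0'$ for $|\widetilde{f}_\rho|$ from Proposition~\ref{attractingsubtree1}. For an edge $e=(X,Y;Z,Z')$ lying outside a sufficiently large compact core, both $|f_\rho([Z])|$ and $|f_\rho([Z'])|$ are large, so by the monotone comparison above the sign of $|\widetilde{f}_\rho([Z],v)|-|\widetilde{f}_\rho([Z'],v')|$ agrees with the sign of $\widetilde{a}_\rho([Z],v)-\widetilde{a}_\rho([Z'],v')$ --- the exponentially small error term cannot reverse a strict inequality once both quantities exceed a fixed threshold. Hence the $\widetilde{a}_\rho$-orientation and the $|\widetilde{f}_\rho|$-orientation agree on all but finitely many edges. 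Let $\Sigma_0$ be the smallest connected subtree of $\Sigma$ containing $\Sigma_0'$ together with all endpoints of the finitely many exceptional edges; this is still compact. Every edge outside $\Sigma_0$ lies outside $\Sigma_0'$ and is non-exceptional, so it is $|\widetilde{f}_\rho|$-oriented toward $\Sigma_0'\subseteq\Sigma_0$, and therefore $\widetilde{a}_\rho$-oriented toward $\Sigma_0$ as well; thus $\Sigma_0$ is a compact attracting subtree for $\widetilde{a}_\rho$.

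The main obstacle I anticipate is the honest bookkeeping at the threshold: I must rule out the possibility that on infinitely many edges the two relevant trace values, though large, are so close to each other that the $O(\exp(-a/2))$ correction flips the comparison. The clean way to handle this is to note that for the \emph{strict} inequalities defining the orientation, only finitely many edges can have $|\widetilde{f}_\rho([Z])|=|\widetilde{f}_\rho([Z'])|$ or near-equality within the error scale; away from those, once both values exceed a fixed bound the comparison is robust. One should also be slightly careful that the phase $\alpha(\xi)$ does not spoil the estimate, but the stated uniform $O(\exp(-a/2))$ bound holds independently of $\alpha$, so this causes no trouble. With these points dispatched, the deduction is essentially immediate from Proposition~\ref{attractingsubtree1}.
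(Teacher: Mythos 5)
Your proposal follows essentially the same route as the paper's own proof: both use the Fibonacci growth of $\log^+|f_\rho|$ guaranteed by the BQ-conditions to show that $|\widetilde{f}_\rho|$ and $\exp(\tfrac{1}{2}\widetilde{a}_\rho)$ differ by an exponentially small error, conclude that the two induced edge orientations coincide on all but finitely many edges, and then deduce the compact attracting subtree for $\widetilde{a}_\rho$ from Proposition \ref{attractingsubtree1}. The differences are only presentational: you spell out how to enlarge the attracting subtree to absorb the exceptional edges, and you explicitly flag the near-tie bookkeeping, both of which the paper leaves implicit in the phrase ``then this proposition follows from Proposition \ref{attractingsubtree1}.''
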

\begin{proof}
	By Theorem 3.3 of \cite{tan-wong-zhang2008advm} and the fact that Fibonacci growth implies that the BQ-conditions are satisfied, a representation $\rho$ satisfies the BQ-conditions if and only if $\log^+|f_\rho|$ has Fibonacci growth. Let $v$ be a vertex of $\Sigma$. The Fibonacci growth of $\log^+|f_\rho|$ implies that the difference 
	\begin{equation*}
		||f_\rho|(W)-\exp\frac{a_\rho(W)}{2}|
	\end{equation*}
	 for $W$ with distance $N$ to $v$ is $O(\exp(-kN))$ as $N$ goes to infinity, where $k$ is the constant appearing in the lower Fibonacci growth inequality for $\log^+|f_\rho|$. Another consequence of the Fibonacci growth result is that for any compact subset in $\RR_{\ge0}$, its $\log^+|f_\rho|$-pre-image is finite. Combining these two facts, we can conclude that the edge orientation induced by $|\widetilde{f}_\rho|$ and that induced by $\widetilde{a}_\rho$ coincide on all but finitely many edges. Then this proposition follows from Proposition \ref{attractingsubtree1}.
\end{proof}

\medskip

\noindent(III) \textit{Angle function} The third function is called the {\textit{angle function}} denoted by $A_\rho$. For the study in this part, we assume moreover that the $\rho$-image of all primitive elements are loxodromic which is the case for $BQ$-representations. 

Let $([X],v)\in \widetilde{\Omega}$. Consider a representative $(X,Y,Z)$ of the superbasis corresponding to $v$. Since all primitive elements are sent to loxodromic elements by $\rho$, the axes of $\rho(X)$, $\rho(Y)$ and $\rho(Z)$ exist, and we can orient them, so that their translation directions are the positive directions respectively. We denote the three oriented axes by $\delta_X$, $\delta_Y$ and $\delta_Z$ respectively.  We denote by $\gamma_X$, $\gamma_Y$ and $\gamma_Z$ the axes of $\pi$-rotations $r(X)$, $r(Y)$ and $r(Z)$ respectively, where $r(X)$, $r(Y)$ and $r(Z)$ satisfy:
\begin{eqnarray*}
	\rho(X)&=&r(Y)r(Z),\\
	\rho(Y)&=&r(Z)r(X),\\
	\rho(Z)&=&-r(X)r(Y).
\end{eqnarray*}

Then  $A_\rho$ at $([X],v)$ is defined to be the complementary angle of the angle between the axes of $\rho(Y)$ and $\rho(Z)$. Notice that the value of $A_\rho$ depends on $[X]$, as well as $v$ which is different from the previous two cases. We will show:
\begin{prop}\label{attractingsubtree3}
	An irreducible representation $\rho$ satisfies the BQ-conditions, then there exists a compact attracting subtree in $\Sigma$ with respect to the edge orientation induced by $A_\rho$.
\end{prop}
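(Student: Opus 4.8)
The plan is to reduce Proposition \ref{attractingsubtree3} to Proposition \ref{attractingsubtree1} by proving that the edge orientation induced by $A_\rho$ and the one induced by $|\widetilde{f}_\rho|$ agree on all but finitely many edges of $\Sigma$. The link between them is a trace formula for the angle. Fix a vertex $v=(X,Y,Z)$ and let $r(X),r(Y),r(Z)$ be the associated $\pi$-rotations furnished by the $\SLtwoC$-Coxeter extension (Proposition \ref{irre1}). Because $XYZ=\mathrm{id}$ we have $\rho(X)\rho(Y)=\rho(Z)^{-1}$, hence $\tr(\rho(X)\rho(Y))=z$. Writing each factor as a product of two $\pi$-rotations and expanding the trace as in (\ref{tracerotation}), I would obtain a closed expression for the complex distance $\sigma$ between the oriented axes $\delta_X,\delta_Y$ (whose common perpendicular is $\gamma_Z$), of the form
\[
	\cosh\sigma=\frac{2z-xy}{\sqrt{(x^2-4)(y^2-4)}},
\]
where the branch of the square root is the one pinned down by the translation-direction orientations of $\delta_X$ and $\delta_Y$. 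The imaginary part of $\sigma$ is the angle between $\delta_X$ and $\delta_Y$, so that $A_\rho([Z],v)$ is its complementary angle.

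Next I would specialise this to an edge $e=(X,Y;Z,Z')$ with endpoints $v=(X,Y,Z)$ and $v'=(X,Y,Z')$. Using the growth relation $z+z'=xy$ from (\ref{growth}) gives
\[
	\cosh\sigma^{(v)}=\frac{z-z'}{\sqrt{(x^2-4)(y^2-4)}},\qquad
	\cosh\sigma^{(v')}=-\cosh\sigma^{(v)}.
\]
The second identity reflects that moving from $v$ to $v'$ reverses the chosen orientation of one of the two shared axes, so $\sigma^{(v')}=\sigma^{(v)}+i\pi$; consequently the two complementary angles attached to the ends of $e$ are supplementary, and the $A_\rho$-orientation of $e$ is decided by whether the angle at $v$ is acute or obtuse, i.e. by the sign of the real part of $\cosh\sigma^{(v)}$.

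To control this sign I would square and use the vertex identity (\ref{commutator}) together with $zz'=x^2+y^2-\mu$, which yields the clean relation
\[
	\cosh^2\sigma^{(v)}=1-\frac{4(4-\mu)}{(x^2-4)(y^2-4)}.
\]
By \cite{tan-wong-zhang2008advm} the BQ-conditions are equivalent to Fibonacci growth of $\log^+|f_\rho|$, and this forces $(x^2-4)(y^2-4)\to\infty$ on edges leaving any finite subtree, since the trace of at least one adjacent region tends to infinity while all traces avoid $[-2,2]$. Hence $\cosh\sigma^{(v)}\to\pm1$, the angle tends to $0$ or $\pi$, and the acute/obtuse dichotomy stabilises. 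Comparing the dominant term of $\cosh\sigma^{(v)}=(z-z')/\sqrt{(x^2-4)(y^2-4)}$ with the comparison of $|z|$ and $|z'|$ that governs the $|\widetilde{f}_\rho|$-orientation, one checks that the two orientations coincide outside a finite set $E_0$ of edges. Enlarging a compact attracting subtree for $|\widetilde{f}_\rho|$ (Proposition \ref{attractingsubtree1}) to the convex hull of its union with the endpoints of $E_0$ produces a finite subtree toward which every remaining edge points; this is a compact attracting subtree for $A_\rho$, proving the proposition.

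\textbf{Main obstacle.} The hard part is precisely the sign and branch bookkeeping in the last step. The square root $\sqrt{(x^2-4)(y^2-4)}$ must be interpreted through the translation-direction orientations of the axes, and it is this choice that determines whether $\cosh\sigma^{(v)}\to+1$ or $-1$ once one of $z,z'$ dominates. Getting it wrong would make the subtree repelling rather than attracting, so the use of the \emph{complementary} angle (rather than the angle itself) in the definition of $A_\rho$ is exactly what ought to guarantee attraction; verifying that the relevant branch is asymptotic to $+xy$, so that the acute/obtuse determination matches $|z|\gtrless|z'|$, requires careful tracking of the $\pi$-rotations and their lifts, and is where most of the work lies. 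The concluding enlargement step rests only on the elementary observation that altering an edge orientation on finitely many edges cannot destroy the existence of a compact attracting subtree.
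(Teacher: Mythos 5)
Your reduction scheme (show that the $A_\rho$-orientation and the $|\widetilde{f}_\rho|$-orientation agree off a finite set of edges, then enlarge the compact attracting subtree of Proposition \ref{attractingsubtree1}) is sound, and it is the same mechanism the paper itself uses for $\widetilde{a}_\rho$ in Proposition \ref{attractingsubtree2}. Your trace formula is also not really new: since $x=-2\cosh l(\delta_X)$, $y=-2\cosh l(\delta_Y)$, $z=2\cosh l(\delta_Z)$, the identity $\cosh\sigma^{(v)}=(2z-xy)/\sqrt{(x^2-4)(y^2-4)}$ is exactly the hexagon cosine rule (\ref{cosine3}) with $\sqrt{(x^2-4)(y^2-4)}=\pm 4\sinh l(\delta_X)\sinh l(\delta_Y)$, and your use of $z+z'=xy$ and $zz'=x^2+y^2-\mu$, the supplementary-angle relation at the two ends of an edge, and the final ``enlarge by the finitely many bad edges'' step are all correct.

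The genuine gap is the one you flag yourself and then do not close: the entire comparison of orientations rests on knowing \emph{which} branch of $\sqrt{(x^2-4)(y^2-4)}$ is imposed by the translation-direction orientations of $\delta_X,\delta_Y$, because the $A_\rho$-orientation is read off from the sign of $\mathrm{Re}\,\cosh\sigma^{(v)}$. Your only quantitative estimate, $\cosh^2\sigma^{(v)}=1-4(4-\mu)/\bigl((x^2-4)(y^2-4)\bigr)$, is branch-independent by construction, so it can only show $\cosh\sigma^{(v)}\to\pm1$; it is structurally incapable of deciding whether the angle tends to $0$ or to $\pi$, i.e.\ whether the would-be subtree is attracting or repelling. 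This sign determination is not a routine verification: it is precisely the technical heart of the paper's proof, carried out in Lemma \ref{cosh} and above all Lemma \ref{sinh}, which pins down the branch by showing $\sinh l(\delta_X)$ is asymptotic to $-\tfrac{1}{2}(z/y-y/z)$ (using that $\coth l(\delta_X)$ is uniformly bounded away from $0$ and $\infty$ for BQ-representations, and injectivity of $\cosh$ on a suitable strip), and which then feeds into the uniform convergence $(l(\gamma_X),l(\gamma_Y),l(\gamma_Z))\to(i\pi,i\pi,0)$. Until you prove an analogue of Lemma \ref{sinh} (your claim that ``the relevant branch is asymptotic to $+xy$''), the argument is an outline rather than a proof. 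A secondary, smaller point: $(x^2-4)(y^2-4)\to\infty$ off finite subtrees does not follow merely from one adjacent trace blowing up while traces avoid $[-2,2]$; you also need the other factor bounded away from $0$, which requires the Fibonacci lower bound (only finitely many regions have bounded trace) rather than just condition (1) of the BQ-conditions.
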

Combining the above three propositions, we have the following corollary:
\begin{cor}\label{3orientation}
	If an irreducible representation $\rho$ satisfies BQ-condition, then there exists a compact subtree of $\Sigma$ which is attracting with respect to all three edge orientations.
\end{cor}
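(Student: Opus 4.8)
The plan is to prove Corollary \ref{3orientation} by combining the three preceding propositions, using the fact that $\Sigma$ is a tree (so attracting subtrees behave well under intersection) together with the observation that each of the three edge orientations differs from the orientation induced by $|\widetilde{f}_\rho|$ only on finitely many edges. Concretely, by Proposition \ref{attractingsubtree1}, the orientation induced by $|\widetilde{f}_\rho|$ admits a compact attracting subtree $\Sigma_1$. Proposition \ref{attractingsubtree2} gives a compact attracting subtree $\Sigma_2$ for $\widetilde{a}_\rho$, and its proof already establishes the crucial quantitative fact that the orientation from $\widetilde{a}_\rho$ agrees with that from $|\widetilde{f}_\rho|$ on all but finitely many edges. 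The strategy is to show the analogous finiteness statement for the angle orientation $A_\rho$ of Proposition \ref{attractingsubtree3}, and then to amalgamate the three compact subtrees into a single one.

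First I would record the following general lemma about trees: if $\mathcal{O}$ and $\mathcal{O}'$ are two edge orientations on $\Sigma$ that agree on all but finitely many edges, and $\mathcal{O}$ admits a compact attracting subtree, then $\mathcal{O}'$ also admits a compact attracting subtree. The proof is soft: let $\Sigma_1$ be a compact attracting subtree for $\mathcal{O}$, and let $D$ be the (finite) set of edges on which $\mathcal{O}$ and $\mathcal{O}'$ disagree. Enlarge $\Sigma_1$ to the smallest connected subgraph $\Sigma_1'$ containing $\Sigma_1$ and every edge of $D$; since $\Sigma$ is a tree and $D$ is finite, $\Sigma_1'$ is a finite, hence compact, connected subtree. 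Every edge $e\notin\Sigma_1'$ lies outside $\Sigma_1$ and outside $D$, so it is oriented the same way by $\mathcal{O}$ and $\mathcal{O}'$, and since $\Sigma_1\subseteq\Sigma_1'$ that orientation points toward $\Sigma_1$, hence toward $\Sigma_1'$. Thus $\Sigma_1'$ is a compact attracting subtree for $\mathcal{O}'$. Applying this lemma with $\mathcal{O}=|\widetilde{f}_\rho|$ and $\mathcal{O}'=A_\rho$ reduces the whole matter to the finiteness of the disagreement set between the angle orientation and the modulus-of-traces orientation.

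The main obstacle, therefore, is the quantitative comparison of $A_\rho$ with $|\widetilde{f}_\rho|$: I must show that, apart from finitely many edges, the two orientations coincide. Using the Coxeter extension (Theorem \ref{irre}) and the $\pi$-rotation decomposition $\rho(X)=r(Y)r(Z)$ together with the double-cross formalism, the complementary angle defining $A_\rho$ at $([X],v)$ can be related to the complex translation distances of the generators; the governing relation is $\tr\rho(X)=-2\cosh l(\rho(X))$ from \eqref{tracerotation}, where the imaginary part of $l(\rho(X))$ records the angle between the relevant axes. Under the BQ-conditions the $f_\rho$-values have Fibonacci growth, so the real parts (the translation distances $a_\rho$) blow up exponentially with the distance to a fixed vertex, which forces the two axes of $\rho(Y)$ and $\rho(Z)$ to become nearly orthogonal with the relevant angle tending to a definite limit. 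I would make this precise by estimating, as in the proof of Proposition \ref{attractingsubtree2}, that the discrepancy between the $A_\rho$-comparison and the $|\widetilde{f}_\rho|$-comparison across an edge at distance $N$ is $O(\exp(-kN))$; since only finitely many edges lie within any bounded distance and the comparison at a generic edge is strict once $N$ is large, the orientations agree on all but finitely many edges. This estimate is really the content of Proposition \ref{attractingsubtree3} and the heart of the argument.

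Finally, with all three orientations each admitting a compact attracting subtree, I would produce a single subtree attracting for all three simultaneously. By the same tree argument, let $\Sigma_1,\Sigma_2,\Sigma_3$ be compact attracting subtrees for the three orientations, and let $\Sigma_0$ be the smallest connected subtree containing $\Sigma_1\cup\Sigma_2\cup\Sigma_3$ together with all the finitely many edges of disagreement among the three orientations. As a finite connected subgraph of the tree $\Sigma$, $\Sigma_0$ is compact and connected. Any edge $e\notin\Sigma_0$ lies outside each $\Sigma_i$ and outside every disagreement set, so for each of the three orientations it is oriented toward the corresponding $\Sigma_i\subseteq\Sigma_0$, hence toward $\Sigma_0$. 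Therefore $\Sigma_0$ is a compact subtree of $\Sigma$ that is attracting with respect to all three edge orientations, which is exactly the assertion of the corollary.
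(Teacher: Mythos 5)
Your proof is correct and is essentially the paper's own (implicit) argument: the corollary follows by taking the smallest connected subtree containing the three compact attracting subtrees furnished by Propositions \ref{attractingsubtree1}, \ref{attractingsubtree2} and \ref{attractingsubtree3}, since in a tree an edge oriented toward a connected subtree is automatically oriented toward any compact connected enlargement of it. Your middle paragraph is redundant (and is the only non-rigorous part): Proposition \ref{attractingsubtree3} is already available to quote for the angle orientation, so neither the finite-disagreement comparison with $|\widetilde{f}_\rho|$ nor the inclusion of the disagreement edges in $\Sigma_0$ is needed for the final amalgamation, which goes through verbatim using only $e\notin\Sigma_i$ for $i=1,2,3$.
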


\subsection{Proof of Proposition \ref{attractingsubtree3}}
\subsubsection{Cosine rule for right-angled hexagons}
In Section VI of \cite{fenchel}, Fenchel gave an interpretation of the cosine rule and sine rule for a right-angled hexagon using what he called line matrices. They are called $\pi$-rotations in our work. We recall the cosine rule following his idea. As a convention, if two sides of $H$ are non-degenerate, we assume that they are non-collinear. We also assume that two adjacent sides of $H$ are not both degenerate.

Let us first assume that $H$ is non-degenerate. Let $s_1$, $s_2$, $s_3$, $s_4$, $s_5$ and $s_6$ denote its sides in a cyclic order. All indices in this part will be considered up to mod $6$. Let $\gamma_n$ denote the geodesic containing $s_n$. We choose the orientation for each $\gamma_n$ which is compatible with this cyclic order of $s_n$'s. Let $r_n$ denote the $\pi$-rotation with respect to $\gamma_n$. Since $\gamma_{n-1}$ and $\gamma_{n+1}$ are disjoint, the product $r_{n+1}r_{n-1}$ is a loxodromic element in $\SLtwoC$ whose orientated axis is $\gamma_n$. Let $l_n$ denote half of its complex translation distance. Then we have:
\begin{eqnarray*}
	\cosh l_n&=&-\frac{1}{2}\tr(r_{n+1}r_{n-1}),\\
	\sinh l_n&=&\frac{1}{2}i\,\tr(r_{n+1}r_nr_{n-1}).\\
\end{eqnarray*}
The cosine rule for $H$ is written as follows:
\begin{equation}\label{cosine}
	\cosh l_n=\cosh l_{n-2}\cosh l_{n+2}+\sinh l_{n-2}\sinh l_{n+2}\cosh l_{n+3}.
\end{equation}
Now let us consider the degenerate case. There are three cases:
\begin{itemize}
	\item there is one degenerate side;
	\item there are two degenerate sides which are non-adjacent;
	\item there are three alternate sides of $H$ which are degenerate, so that $H$ becomes a triangle.
\end{itemize}
If $s_n$ is degenerate, then $\gamma_n$ is the common perpendicular geodesic of $\gamma_{n-1}$ and $\gamma_{n+1}$. The orientation on $\gamma_n$ chosen  so that the angle counted from the positive direction of $\gamma_{n-1}$ to the positive direction of $\gamma_{n+1}$ is positive with value in $(0,\pi)$. Then the identity (\ref{cosine}) holds in the degenerate cases by replacing "$\cosh$" and "$\sinh$" by "$\cos$" and "$\sin$" respectively for the degenerate sides.

Next we would like to show that the identity (\ref{cosine}) still holds when we choose arbitrary combination of orientations of $\gamma_i$'s. To see this, we may rewrite (\ref{cosine}) using $\pi$-rotations as follows:
\begin{eqnarray*}
	&&-\frac{1}{2}\tr(r_{n+1}r_{n-1})-\frac{1}{4}\tr(r_{n-1}r_{n-3})\tr(r_{n-3}r_{n+1})\\
	&=&\frac{1}{8}\tr(r_{n-1}r_{n-2}r_{n-3})\tr(r_{n-3}r_{n+2}r_{n+1})\tr(r_{n-2}r_{n+2})
\end{eqnarray*}
When we change the orientation of $\gamma_n$, the corresponding $\pi$-rotation is changed from $r_n$ to $-r_n$. Since the $\pi$-rotation along each side appears either once in each term or twice in some terms, we can check that (\ref{cosine}) still holds with arbitrary sides orientations.

\subsubsection{right-angled hexagon for an ordered triple}
Let $\rho$ be a BQ-representation. Let $(X,Y,Z)$ be an ordered triple. Let $(r(X),r(Y),r(Z))$ be a triple of $\pi$-rotations satisfying:
\begin{eqnarray*}
	\rho(X)&=&r(Y)r(Z),\\
	\rho(Y)&=&r(Z)r(X),\\
	\rho(Z)&=&-r(X)r(Y).
\end{eqnarray*}
Let $\delta_X$, $\delta_Y$, $\delta_Z$, $\gamma_X$, $\gamma_Y$ and $\gamma_Z$ denote the oriented axes of $\rho(X)$, $\rho(Y)$, $\rho(Z)$, $r(X)$, $r(Y)$ and $r(Z)$ respectively. The orientations on $\delta_X$, $\delta_Y$ and $\delta_Z$ are chosen to be the same as the translation directions of actions of $\rho(X)$, $\rho(Y)$ and $\rho(Z)$ respectively. The six geodesics bound a right-angled hexagon denoted by $H(X,Y,Z)$. 
\begin{center}
	\includegraphics[scale=0.3]{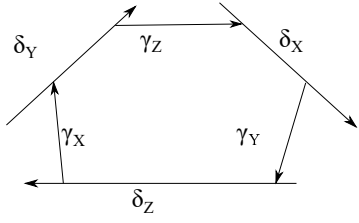}
	\captionof{figure}{Right-angled hexagon $H(X,Y,Z)$}
\end{center}

We denote by $r_\delta(X)$, $r_\delta(Y)$ and $r_\delta(Z)$ the three $\pi$-rotations along the oriented axes $\delta_X$, $\delta_Y$ and $\delta_Z$. We consider the complex transport distances:
\begin{eqnarray*}
	2l(\delta_X)&=&a(\delta_X)+i\alpha(\delta_X),\\
	2l(\delta_Y)&=&a(\delta_Y)+i\alpha(\delta_Y),\\
	2l(\delta_Z)&=&a(\delta_Z)+i\alpha(\delta_Z),\\
	2l(\gamma_X)&=&a(\gamma_X)+i\alpha(\gamma_X),\\
	2l(\gamma_Y)&=&a(\gamma_Y)+i\alpha(\gamma_Y),\\
	2l(\gamma_Z)&=&a(\gamma_Z)+i\alpha(\gamma_Z),
\end{eqnarray*}
of $r(Z)r(X)$, $r(Z)r(X)$, $r(X)r(Y)$, $r_\delta(Y)r_\delta(Z)$, $r_\delta(Z)r_\delta(X)$ and $r_\delta(X)r_\delta(Y)$, respectively. By Formula (\ref{tracerotation}) above, we have:
\begin{eqnarray*}
	2\cosh l(\delta_X)&=&-\tr(r(Y)r(Z)),\\
	2\cosh l(\delta_Y)&=&-\tr(r(Z)r(X)),\\
	2\cosh l(\delta_Z)&=&-\tr(r(X)r(Y)),\\
	2\cosh l(\gamma_X)&=&-\tr(r_\delta(Y)r_\delta(Z)),\\
	2\cosh l(\gamma_Y)&=&-\tr(r_\delta(Z)r_\delta(X)),\\
	2\cosh l(\gamma_Y)&=&-\tr(r_\delta(X)r_\delta(Y)).
\end{eqnarray*}
Using the cosine rule for $H(X,Y,Z)$, we have the following relations:
\begin{eqnarray}
	\cosh l(\gamma_X)=\frac{\cosh l(\delta_X)-\cosh l(\delta_Y)\cosh l(\delta_Z)}{\sinh l(\delta_Y)\sinh l(\delta_Z)},\label{cosine1}\\
	\cosh l(\gamma_Y)=\frac{\cosh l(\delta_Y)-\cosh l(\delta_Z)\cosh l(\delta_X)}{\sinh l(\delta_Z)\sinh l(\delta_X)},\label{cosine2}\\
	\cosh l(\gamma_Z)=\frac{\cosh l(\delta_Z)-\cosh l(\delta_X)\cosh l(\delta_Y)}{\sinh l(\delta_X)\sinh l(\delta_Y)}.\label{cosine3}	
\end{eqnarray}

Let $(x,y,z):=(\tr\rho(X),\tr\rho(Y),\tr\rho(Z))$. Then we have:
\begin{eqnarray*}
	x&=&-2\cosh l(\delta_X),\\
	y&=&-2\cosh l(\delta_Y),\\
	z&=&2\cosh l(\delta_Z).
\end{eqnarray*}
We recall that they satisfy Relation (\ref{commutator}):
\begin{equation*}
	x^2+y^2+z^2=xyz+\mu,
\end{equation*}
where $\mu$ only depends on $\rho$. We can rewrite it as:
\begin{equation*}
	\frac{x}{yz}+\frac{y}{xz}+\frac{z}{xy}=1+\frac{\mu}{xyz}	
\end{equation*} since $\rho$ satisfies the BQ-conditions and so $x,y,z \neq 0$.
Without loss of generality, we assume that $|z|\ge|y|\ge|x|$. Recall that the function $\log^+|f_\rho|$ has  Fibonacci growth. We consider the Fibonacci growth inequality for $\log^+|f_\rho|$ with respect to a initial edge $e$ on $\Sigma$. Let $k$ and $K$ be the two constant in the inequality for the lower bound and upper bound respectively. If $(X,Y,Z)$ is a vertex on $\Sigma$ with distance $N$ to $e$ and $N$ is large enough, then we have:
\begin{eqnarray*}
	|z|\ge|y|\ge\exp(k(N-1)).
\end{eqnarray*}
This implies:
\begin{lem}\label{cosh}
	The sum 
	\begin{equation*}
		\frac{1}{x}\left(\frac{z}{y}+\frac{y}{z}\right)
	\end{equation*} 
	converges to 1 uniformly with error $O(\exp(-kN))$ as $N$ goes to infinity.
\end{lem}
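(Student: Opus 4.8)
The plan is to start from the trace relation (\ref{commutator}) and rearrange it so that the target quantity appears explicitly, with all remaining terms manifestly small. First I would combine the target into a single fraction,
$$\frac{1}{x}\left(\frac{z}{y}+\frac{y}{z}\right)=\frac{y^2+z^2}{xyz},$$
and then substitute $y^2+z^2=xyz+\mu-x^2$, which comes directly from $x^2+y^2+z^2=xyz+\mu$. This yields the clean identity
$$\frac{1}{x}\left(\frac{z}{y}+\frac{y}{z}\right)=1+\frac{\mu}{xyz}-\frac{x}{yz},$$
so the entire statement reduces to showing that the two error terms $\mu/(xyz)$ and $x/(yz)$ are each $O(\exp(-kN))$.

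Next I would estimate the two error terms separately. For the dominant term, the normalization $|x|\le|y|\le|z|$ is exactly what is needed: it gives
$$\left|\frac{x}{yz}\right|=\frac{|x|}{|y|\,|z|}\le\frac{1}{|z|}\le\exp(-k(N-1)),$$
using the lower Fibonacci estimate $|z|\ge\exp(k(N-1))$ recorded just above the lemma; note that this bound does not even require a lower bound on $|x|$. For the term $\mu/(xyz)$, the constant $\mu$ depends only on $\rho$, so I would bound $|xyz|$ from below. Here the BQ-conditions enter directly: condition (1) forbids traces in $[-2,2]$, and condition (2) leaves only finitely many primitive elements with $|f_\rho|\le 2$, so there is a uniform constant $c_0>0$ with $|x|\ge c_0$ for every primitive element. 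Together with $|y|,|z|\ge\exp(k(N-1))$ this gives $|xyz|\ge c_0\exp(2k(N-1))$, whence $|\mu/(xyz)|=O(\exp(-2kN))$, which is of even smaller order.

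Adding the two estimates gives $\frac{1}{x}\bigl(\frac{z}{y}+\frac{y}{z}\bigr)=1+O(\exp(-kN))$, with the implied constant depending only on $k$, $\mu$, and $c_0$ and not on the particular vertex at distance $N$, which is precisely the asserted uniform convergence. The only genuinely delicate point, and the step I would be most careful about, is the lower bound $|x|\ge c_0$: one must rule out the smallest of the three traces being arbitrarily close to $0$ even when the vertex is far from $e$, since a single fixed region of small trace-modulus is adjacent to vertices at every distance from $e$. This is exactly where the two BQ-conditions are used in an essential way, rather than the Fibonacci growth alone, and it is what guarantees the estimate is uniform over all vertices at distance $N$.
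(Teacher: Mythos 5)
Your proposal is correct and is essentially the paper's own argument: the paper deduces the lemma directly (``This implies:'') from the rearranged trace relation $\frac{x}{yz}+\frac{y}{xz}+\frac{z}{xy}=1+\frac{\mu}{xyz}$ together with the normalization $|z|\ge|y|\ge|x|$ and the Fibonacci lower bound $|z|\ge|y|\ge\exp(k(N-1))$, which is exactly the identity and the estimates you write out. Your only addition is to make explicit the uniform lower bound $|x|\ge c_0>0$ coming from the two BQ-conditions, a point the paper uses implicitly (it only remarks that $x,y,z\neq 0$); this is a genuine and correctly handled detail, not a different method.
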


This in turn implies:
\begin{lem}\label{sinh}
	The quantity 
	\begin{equation*}
		\frac{-1}{2\sinh l(\delta_X)}\left(\frac{z}{y}-\frac{y}{z}\right)
	\end{equation*}
	converges to 1 uniformly with error $O(\exp(-kN))$ as $N$ goes to infinity.
\end{lem}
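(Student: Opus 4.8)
The plan is to reduce the claim to the purely algebraic identity
\[
\left(\frac{z}{y}-\frac{y}{z}\right)^2=\left(\frac{z}{y}+\frac{y}{z}\right)^2-4,
\]
combined with the hyperbolic Pythagorean identity $4\sinh^2 l(\delta_X)=x^2-4$, which follows from $x=-2\cosh l(\delta_X)$ and $\cosh^2-\sinh^2=1$. Writing $A=\frac{z}{y}+\frac{y}{z}$ and $B=\frac{z}{y}-\frac{y}{z}$, Lemma \ref{cosh} gives $A=x\bigl(1+\eta\bigr)$ with $\eta=O(\exp(-kN))$, so the whole task is to compare $B^2=A^2-4$ with $4\sinh^2 l(\delta_X)=x^2-4$ and then to extract the correct square root.

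For the magnitude I would compute
\[
\frac{B^2}{4\sinh^2 l(\delta_X)}=\frac{A^2-4}{x^2-4}=1+\frac{x^2(2\eta+\eta^2)}{x^2-4}.
\]
The key input is that $|x^2-4|$ is bounded below by a positive constant uniformly over the vertices considered: since $\rho$ satisfies the BQ-conditions we have $x\notin[-2,2]$ always, and only finitely many primitive elements have $|x|$ below any fixed bound (a consequence of the Fibonacci growth of $\log^+|f_\rho|$). Hence $|x^2-4|$ is large when $|x|$ is large, and otherwise ranges over a finite set of nonzero values. Therefore $x^2/(x^2-4)$ is bounded, and with $\eta=O(\exp(-kN))$ this yields $B^2/\bigl(4\sinh^2 l(\delta_X)\bigr)=1+O(\exp(-kN))$, so that $\dfrac{-1}{2\sinh l(\delta_X)}\Bigl(\dfrac{z}{y}-\dfrac{y}{z}\Bigr)=\pm\bigl(1+O(\exp(-kN))\bigr)$.

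The main obstacle is pinning down the sign. Here I would use two facts. First, the standing assumption $|z|\ge|y|$ forces $|z/y|\ge 1\ge|y/z|$; since $z/y=\tfrac12(A+B)$ and $y/z=\tfrac12(A-B)$, this says $|A+B|\ge|A-B|$. Second, $a(\delta_X)=2\,\mathrm{Re}\,l(\delta_X)$ is bounded below by a uniform positive constant $c_1$: from $|x|\le 2\cosh\bigl(a(\delta_X)/2\bigr)$ a large value of $|x|$ forces $a(\delta_X)$ large, while the finitely many regions with bounded $|x|$ contribute finitely many positive values of $a(\delta_X)$; thus $|\exp l(\delta_X)|>|\exp(-l(\delta_X))|$ with a gap uniform in $N$. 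If the sign were $+$, i.e. $B=2\sinh l(\delta_X)(1+o(1))$, then $A+B=-2\exp(-l(\delta_X))(1+o(1))$ and $A-B=-2\exp l(\delta_X)(1+o(1))$, giving $|A+B|<|A-B|$ for $N$ large and contradicting $|A+B|\ge|A-B|$. Hence the sign must be $-$, which is exactly the assertion.

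Two points require care. The branch of $l(\delta_X)$, and hence the sign of $\sinh l(\delta_X)$, is fixed throughout by $\mathrm{Re}\,l(\delta_X)>0$ (equivalently, the axes are oriented by their translation directions); the argument is consistent precisely because this is the same convention under which $x=-2\cosh l(\delta_X)$. The genuinely quantitative subtlety is that the error terms in the sign step must stay smaller than the gap $|\exp l(\delta_X)|-|\exp(-l(\delta_X))|$, which is why the uniform lower bound $a(\delta_X)\ge c_1$ — rather than the sign computation itself — is the real content of the final step.
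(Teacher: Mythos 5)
Your route is essentially the paper's, not a different one: the squared-magnitude computation is literally the paper's displayed identity, since $x^2/(x^2-4)=\coth^2 l(\delta_X)$ and $2\eta+\eta^2=(A/x)^2-1$, so your $1+x^2(2\eta+\eta^2)/(x^2-4)$ is the paper's $1+\coth^2 l(\delta_X)\bigl((A/x)^2-1\bigr)$; your uniform lower bound on $|x^2-4|$ is the paper's assertion that $\coth l(\delta_X)$ is uniformly bounded away from $0$ and $\infty$ under the BQ-conditions; and the sign is pinned by $|z|\ge |y|$, which is exactly the ingredient the paper cites (together with $x=-2\cosh l(\delta_X)$ and injectivity of $\cosh$ on a half-strip). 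The magnitude half of your argument is complete and correct, and your sign discussion is more explicit than the paper's.

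There is, however, one quantitative slip in the sign step, and it is a real one. The errors entering $A\pm B$ are additive and carry a factor of order $|x|$: indeed $A-x=x\eta$, and the second error term is $2\sinh l(\delta_X)\,\epsilon$ with $|2\sinh l(\delta_X)|=|x^2-4|^{1/2}$. Under the BQ-conditions, $\log|x|$ can be as large as $K$ times a Fibonacci value $F_e([X])$, which grows exponentially in $N$; so $|x|\,O(e^{-kN})$ is in general unbounded, and for such vertices comparing the errors against the \emph{constant} gap $2\sinh(c_1/2)$ --- which you describe as ``the real content of the final step'' --- does not close the argument. For the same reason the assertion $A+B=-2e^{-l(\delta_X)}(1+o(1))$ is not literally true: the error cannot be made relative to $|e^{-l(\delta_X)}|=e^{-a(\delta_X)/2}$, which may be far smaller than $e^{-kN}$. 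The repair is a one-liner hiding in an inequality you already wrote: $|x|\le 2\cosh\bigl(a(\delta_X)/2\bigr)$ is equivalent to $2\sinh\bigl(a(\delta_X)/2\bigr)\ge\sqrt{|x|^2-4}$, i.e.\ the gap $|e^{l(\delta_X)}|-|e^{-l(\delta_X)}|$ itself grows linearly in $|x|$ (and is bounded below by a positive constant on the finitely many classes with, say, $|x|\le 3$, all of which are loxodromic). Hence the ratio of error to gap is $O(e^{-kN})$ uniformly, which gives $|A+B|<|A-B|$ for all large $N$ and the desired contradiction with $|A+B|\ge|A-B|$. With that correction your proof is complete and matches the paper's intended argument.
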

\begin{proof}
	\begin{eqnarray*}
	   &&\frac{1}{4\sinh^2 l(\delta_X)}\left(\frac{z}{y}-\frac{y}{z}\right)^2-1\\
	   &=&\frac{1}{4\sinh^2 l(\delta_X)}\left(\left(\frac{z}{y}-\frac{y}{z}\right)^2-4\sinh^2 l(\delta_X)\right)\\
	   &=&\frac{x^2}{4\sinh^2 l(\delta_X)}\left(\frac{1}{x^2}\left(\frac{z}{y}-\frac{y}{z}\right)^2-\frac{4\sinh^2 l(\delta_X)}{x^2}\right)\\	   
	   &=&\coth^2 l(\delta_X)\left(\frac{1}{x^2}\left(\frac{y}{z}+\frac{z}{y}\right)^2-1\right).
	\end{eqnarray*}
	
	Notice that the zeros for the functions "$\cosh$" and "$\sinh$"  are $(\pi/2+n\pi)i$, $(\pi+2n\pi)i$ and $2n\pi i$. If $\rho$ is  a BQ-representation, then the values of $\coth l(\delta_X)$ are bounded away from $0$ and $\infty$ uniformly for all primitive elements $X$. By Lemma \ref{cosh}, the above quantity converges to $0$ uniformly with error $O(\exp(-kN))$ as $N$ goes to infinity.
	
	Notice that the hyperbolic cosine function is injective on the subset of $\CC$ consisting of complex numbers whose real part is positive and imaginary part is in $[0,2\pi)]$.  By Lemma \ref{cosh} and the facts that $|z|\ge|y|$ and $x=-2\cosh l(\delta_X)$, we obtain the lemma.
\end{proof}
To finish the proof of \ref{attractingsubtree3}, we prove the following proposition:
\begin{prop}
	The triple $(l(\gamma(X)),l(\gamma(Y)),l(\gamma(Z)))$ converges to $(i\pi, i\pi, 0)$ uniformly with error $O(\exp(-kN))$ as $N$ goes to $\infty$.
\end{prop}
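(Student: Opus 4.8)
The plan is to read off each $\cosh l(\gamma(\bullet))$ from the cosine rules \eqref{cosine1}--\eqref{cosine3}, substituting $\cosh l(\delta_X)=-x/2$, $\cosh l(\delta_Y)=-y/2$, $\cosh l(\delta_Z)=z/2$ and the edge relations $yz=x+x'$, $xz=y+y'$, $xy=z+z'$. Under these substitutions the numerators collapse: for \eqref{cosine3} one gets $\cosh l(\delta_Z)-\cosh l(\delta_X)\cosh l(\delta_Y)=z/2-xy/4=(z-z')/4$, and similarly the numerators of \eqref{cosine1}, \eqref{cosine2} become $-(x-x')/4$ and $-(y-y')/4$. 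Since $\cosh l(\gamma(\bullet))\to\pm1$ forces $l(\gamma(\bullet))$ toward a zero of $\sinh$, i.e. a critical point of $\cosh$, which in the admissible range (real part $\ge 0$, imaginary part in $[0,2\pi)$) is either $0$ or $i\pi$, the proposition reduces to two tasks: (i) controlling the rate at which $\cosh l(\gamma(\bullet))\to\pm1$, and (ii) determining the correct sign, hence the correct limit point, for each of the three.

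For the rate I work with the square, which is purely algebraic. Using $\sinh^2 l(\delta_X)=x^2/4-1$ and $(w-w')^2=(w+w')^2-4ww'$ together with the commutator relation \eqref{commutator}, I obtain the closed forms
\begin{equation*}
\cosh^2 l(\gamma(X))=1+\frac{4(\mu-4)}{(y^2-4)(z^2-4)},\quad \cosh^2 l(\gamma(Z))=1+\frac{4(\mu-4)}{(x^2-4)(y^2-4)},
\end{equation*}
and the analogous expression for $\gamma(Y)$. By the Fibonacci growth $|y|,|z|\ge\exp(k(N-1))$, while the BQ-conditions keep $\sinh l(\delta_X)$ (equivalently $x^2-4$) bounded away from $0$; hence each denominator grows at rate $\exp(2kN)$ and $\cosh^2 l(\gamma(\bullet))=1+O(\exp(-2kN))$. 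Thus $\cosh l(\gamma(\bullet))=\pm1+O(\exp(-2kN))$, and since $\cosh$ has a simple critical point at each of $0$ and $i\pi$ (where $\cosh w\mp1\sim\tfrac12(w-w_0)^2$), the point $l(\gamma(\bullet))$ lies within $O(\exp(-kN))$ of whichever of $0,i\pi$ matches the sign. This already yields the claimed error rate.

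The sign is the main point, and it is where the signed asymptotics of Lemmas \ref{cosh} and \ref{sinh} enter, since the squared computation is blind to it. For $\gamma(X)$, whose axis is the common perpendicular of the two long axes $\delta_Y,\delta_Z$, I divide \eqref{cosine1} by $\cosh l(\delta_Y)\cosh l(\delta_Z)$ to get $\cosh l(\gamma(X))=(2x/(yz)-1)/(\tanh l(\delta_Y)\tanh l(\delta_Z))$; here $2x/(yz)\to0$ since $|x|\le|y|$, and $\tanh l(\delta_Y),\tanh l(\delta_Z)\to1$ because $\mathrm{Re}\,l(\delta_Y)=a(\delta_Y)/2\to\infty$ as $|y|\to\infty$, so $\cosh l(\gamma(X))\to-1$ and $l(\gamma(X))\to i\pi$. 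For $\gamma(Y)$ and $\gamma(Z)$ one of the two axes is the short $\delta_X$, so I instead insert the signed leading terms $\sinh l(\delta_X)=-\tfrac12(z/y-y/z)(1+o(1))$ from Lemma \ref{sinh}, together with $\sinh l(\delta_Y)=-\tfrac{y}{2}(1+o(1))$ and $\sinh l(\delta_Z)=\tfrac{z}{2}(1+o(1))$. After simplifying with \eqref{commutator} this gives $\cosh l(\gamma(Y))=-(1+\tfrac{x^2-\mu}{z^2-y^2})(1+o(1))$ and $\cosh l(\gamma(Z))=(1-\tfrac{x^2-\mu}{z^2-y^2})(1+o(1))$.

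It then remains to check $x^2/(z^2-y^2)\to0$: squaring Lemma \ref{sinh} gives $x^2-4=(z^2-y^2)^2/(y^2z^2)(1+o(1))$, whence $x^2/(z^2-y^2)=(z^2-y^2)/(y^2z^2)(1+o(1))+O(1/(z^2-y^2))\to0$ because $|z^2-y^2|\le 2|z|^2$ and $|y|,|z|\to\infty$. Hence $\cosh l(\gamma(Y))\to-1$ and $\cosh l(\gamma(Z))\to+1$, giving $l(\gamma(Y))\to i\pi$ and $l(\gamma(Z))\to0$; the first-order expansion $l(\gamma(Z))\approx\pm\sqrt{-2(x^2-\mu)/(z^2-y^2)}$ also shows $l(\gamma(Z))$ is itself small, which resolves the one genuine branch ambiguity (between $0$ and $2\pi i$ among the preimages of $\cosh=+1$ accessible at the boundary of the admissible range). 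I expect this sign-and-branch bookkeeping to be the main obstacle: the clean magnitude estimate is sign-blind, and deciding which long/short configuration of axes forces the angle to $\pi$ rather than to $0$ requires the first-order (not merely modulus) control supplied by Lemmas \ref{cosh} and \ref{sinh}, combined with the a priori positivity $\mathrm{Re}\,l(\gamma(\bullet))\ge0$ and the normalization of the angle in $[0,2\pi)$.
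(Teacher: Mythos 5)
Your proof is correct in substance, and it is worth recording how it relates to the paper's own argument. You keep the same skeleton: reduce the proposition to the convergence of $(\cosh l(\gamma_X),\cosh l(\gamma_Y),\cosh l(\gamma_Z))$ to $(-1,-1,1)$ via the cosine rules (\ref{cosine1})--(\ref{cosine3}), and determine the signs from the signed asymptotics of Lemma \ref{cosh} and Lemma \ref{sinh}; your expressions $\cosh l(\gamma_Y)=-\bigl(1+\tfrac{x^2-\mu}{z^2-y^2}\bigr)(1+o(1))$ and $\cosh l(\gamma_Z)=\bigl(1-\tfrac{x^2-\mu}{z^2-y^2}\bigr)(1+o(1))$, together with the check that $\tfrac{x^2-\mu}{z^2-y^2}\to0$, are a correct and more explicit version of the paper's cases (II) and (III). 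What is genuinely different is your magnitude estimate. The paper bounds each cosine-rule expression term by term, obtaining that each $\cosh l(\gamma_\bullet)$ is within $O(\exp(-kN))$ of its limit, and then passes to the claimed rate for $l(\gamma_\bullet)$ itself; since $\cosh$ is quadratically degenerate at $\pm1$, that argument literally yields only an $O(\exp(-kN/2))$ bound on the distance from $l(\gamma_\bullet)$ to its limit. Your closed forms, e.g.\ $\cosh^2 l(\gamma_Z)=1+\tfrac{4(\mu-4)}{(x^2-4)(y^2-4)}$, do follow from (\ref{commutator}) exactly as you compute, and they capture a cancellation that the term-by-term estimate cannot see: combined with the lower Fibonacci bound on $|y|,|z|$ and the uniform bound $|x^2-4|\ge c>0$ (the same uniformity invoked in the proof of Lemma \ref{sinh}), they give $\cosh l(\gamma_\bullet)=\pm1+O(\exp(-2kN))$, and extracting the square root at the quadratic critical point yields precisely the stated $O(\exp(-kN))$ for $l(\gamma_\bullet)$. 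So your route not only reaches the conclusion but repairs the factor-of-two rate loss implicit in the paper's own proof.

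The one step I would strike is your resolution of the branch ambiguity for $\cosh l(\gamma_Z)\to+1$. The first-order expansion $l(\gamma_Z)\approx\pm\sqrt{-2(x^2-\mu)/(z^2-y^2)}$ is equally valid around the other admissible preimage $2\pi i$ (with $l(\gamma_Z)-2\pi i$ on the left-hand side), so it cannot show that $l(\gamma_Z)$ is small rather than close to $2\pi i$; as written this is circular. The ambiguity, however, is one of normalization rather than geometry: $0$ and $2\pi i$ represent the same rotation angle modulo $2\pi$, the paper itself passes from cosh-convergence to the proposition without comment, and the statement is only meaningful when read modulo $2\pi i\ZZ$ --- equivalently, with the angle representative chosen so that the imaginary part of each $l(\gamma_\bullet)$ lies in $[0,\pi]$, in which case $\cosh l\to1$ forces $l\to0$ and $\cosh l\to-1$ forces $l\to i\pi$ with no further argument. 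With that normalization stated once at the outset, your proof is complete.
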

\begin{proof}
	It is enough to show the uniform convergence of 
	\begin{equation*}
		(\cosh l(\gamma_X), \cosh l(\gamma_Y), \cosh l(\gamma_Z))
	\end{equation*}
	to $(-1,-1,1)$ with the same error control as $N$ goes to $\infty$.
	
	\medskip

	\noindent (I) For $\cosh l(\gamma_X)$, we consider Formula (\ref{cosine1}):
	\begin{eqnarray*}
		&&\cosh l(\gamma_X)\\
		&=&\frac{\cosh l(\delta_X)-\cosh l(\delta_Y)\cosh l(\delta_Z)}{\sinh l(\delta_Y)\sinh l(\delta_Z)}\\
		&=&\frac{\cosh l(\delta_X)}{\sinh l(\delta_Y)\sinh l(\delta_Z)}-\frac{\cosh l(\delta_Y)\cosh l(\delta_Z)}{\sinh l(\delta_Y)\sinh l(\delta_Z)}.
	\end{eqnarray*}
	The proof of Proposition \ref{attractingsubtree2} shows that the real translation distance grows uniformly to infinity as $N$ goes to infinity. When $N$ is big, by our assumption, we have $\sinh l(\delta_Z)$ and $\cosh l(\delta_Z)$ (resp. $\sinh l(\delta_Y)$ and $\cosh l(\delta_Y)$) are close to $\exp(l(\delta_Z))/2$ (resp. $\exp(l(\delta_Z))/2$) with difference $O(\exp(-kN))$ as $N$ goes to infinity. Hence, we conclude that $\cosh l(\gamma_X)$ converges to $-1$ uniformly with error $O(\exp(-kN))$ as $N$ goes to infinity
	
	\medskip
	
	\noindent (II) For $\cosh l(\gamma_Y)$, we us  formula (\ref{cosine2}):
	\begin{eqnarray*}
		&&\cosh l(\gamma_Y)\\
		&=&\frac{\cosh l(\delta_Y)-\cosh l(\delta_Z)\cosh l(\delta_X)}{\sinh l(\delta_Z)\sinh l(\delta_X)}\\
		&=&\left(\frac{\cosh l(\delta_Y)}{\cosh l(\delta_Z)\cosh l(\delta_X)}-1\right)\frac{\cosh l(\delta_Z)\cosh l(\delta_X)}{\sinh l(\delta_Z)\sinh l(\delta_X)}\\
		&=&\left(\frac{2y}{xz}-1\right)\frac{\cosh l(\delta_Z)\cosh l(\delta_X)}{\sinh l(\delta_Z)\sinh l(\delta_X)}\\
		&=&\left(\left(\frac{z}{xy}+\frac{y}{xz}\right)-1+\left(\frac{y}{xz}-\frac{z}{xy}\right)\right)\frac{\cosh l(\delta_Z)\cosh l(\delta_X)}{\sinh l(\delta_Z)\sinh l(\delta_X)}.
	\end{eqnarray*}
	By Lemma \ref{cosh} and Lemma \ref{sinh}, we can conclude that $\cosh l(\gamma_Y)$ converges to $-1$ uniformly with error $O(\exp(-kN))$ as $N$ goes to infinity. 
	
	\medskip
	
	(III) The proof for the convergence of $\cosh l(\gamma_Z)$ is similar to that for $\cosh l(\gamma_Y)$:
	\begin{eqnarray*}
		&&\cosh l(\gamma_Z)\\
		&=&\frac{\cosh l(\delta_Z)-\cosh l(\delta_X)\cosh l(\delta_Y)}{\sinh l(\delta_X)\sinh l(\delta_Y)}\\
		&=&\left(\frac{\cosh l(\delta_Z)}{\cosh l(\delta_X)\cosh l(\delta_Y)}-1\right)\frac{\cosh l(\delta_X)\cosh l(\delta_Y)}{\sinh l(\delta_X)\sinh l(\delta_Y)}\\
		&=&\left(\frac{2z}{xy}-1\right)\frac{\cosh l(\delta_X)\cosh l(\delta_Y)}{\sinh l(\delta_X)\sinh l(\delta_Y)}\\
		&=&\left(\left(\frac{z}{xy}+\frac{y}{xz}\right)-1+\left(\frac{z}{xy}-\frac{y}{xz}\right)\right)\frac{\cosh l(\delta_Z)\cosh l(\delta_X)}{\sinh l(\delta_Z)\sinh l(\delta_X)}.
	\end{eqnarray*}
	By Lemma \ref{cosh} and Lemma \ref{sinh}, we can conclude that $\cosh l(\gamma_Z)$ converges to $1$ uniformly with error $O(\exp(-kN))$ as $N$ goes to infinity. 
\end{proof}
\begin{rmk}
	Using the above proposition, in \cite{leexu}, Lee and Xu prove the equivalence between the BQ-conditions and the primitive stability for $\PSLtwoC$-representations of $\FF_2$. The latter was first defined and studied by Minsky in \cite{minsky} where he studied the action of $\Out(\FF_n)$ on the $\PSLtwoC$-character variety of $\FF_n$.
\end{rmk}

\section{The Fermat point of a hyperbolic triangle}
In Euclidean geometry, we may find different special points associated to a triangle satisfying different properties. We are interested in the one which is called the Fermat point. More precisely, consider a triangle in the Euclidean plane. Then its Fermat point is the one which realizes the minimum of the sum of distances to its three vertices, among all points in the plane. In this section, we introduce its hyperbolic counterpart.  

\subsection{Convexity lemma in hyperbolic geometry}
Before going further, let us first recall the convexity property of the distance function in hyperbolic geometry which plays an important role in most of the proofs in the reminder of the paper. For any two points $p$ and $q$ in $\mathbb{H}^n$, we denote by $pq$ the geodesic segment connecting them and by $|pq|$ the hyperbolic length of this segment.
\begin{lem1}[Convexity Lemma]
	Let $p_1$, $p_2$, $p_3$ and $p_4$ be four points in $n$-dimensional hyperbolic space $\mathbb{H}^n$. Let $p$ and $p'$ be two points on $p_1p_2$ and $p_3p_4$ respectively, such that $|p_1p|/|p_1p_2|=|p_4p'|/|p_3p_4|=t$ for some $t\in(0,1)$. Then,
	\begin{equation}
	|pp'|\le (1-t)|p_1p_4|+t|p_2p_3|.
	\end{equation}
	Moreover equality is realized if and only if all four points lie on the same geodesic and $q$ is between $p$ and $p'$, where $q$ is on $p_1p_3$ such that $|p_1q|/|p_1p_3|=t$.
\end{lem1}
This is a standard result in the differential geometry of spaces with non-positive curvature. To make the paper self-contained, we give a proof of this lemma in the appendix. As a convention, by the convexity lemma we will always refer to this lemma.

\subsection{Compact hyperbolic triangles}
A {\textit{compact triangle}} in $\HH$ is a triangle whose sides are geodesic segments with finite lengths. We will not consider the degenerate case, where the three vertices are  collinear. All indices in this section are considered up to mod $3$. The following definitions will be useful in the subsequent discussions:
\begin{defn}
	A triangle in $\HH$ is called {\textit{$2\pi/3$-acute}} if all three internal angles are strictly smaller than $2\pi/3$; otherwise, it will be called {\textit{$2\pi/3$-obtuse}}.
\end{defn}
\begin{rmk}
	In particular, by our definition, a triangle with an internal angle equal to $2\pi/3$ is $2\pi/3$-obtuse.
\end{rmk}

Let $\Delta$ be a compact triangle in the hyperbolic plane $\HH$ with vertices $v_1$, $v_2$ and $v_3$. Its complement in $\HH$ has two connected components, each of which is an open set. We consider the bounded one and call it the {\textit{$\Delta$-domain}}.

\begin{defn}
	A point $p$ in $\HH$ distinct from $v_1$, $v_2$ and $v_3$ is said to be a {\textit{balanced}} point of $\Delta$ if it has the following property:
	\begin{equation}\label{balance}
	\angle v_1pv_2=\angle v_2pv_3=\angle v_3pv_1=\frac{2\pi}{3}.
	\end{equation}  
\end{defn}
We denote by $\widetilde{p}$ a balanced point.
\begin{prop}\label{balancedp}
	The triangle $\Delta$ admits a balanced point if and only if it is $2\pi/3$-acute. Moreover if it exists, it is unique and contained in the $\Delta$-domain.
\end{prop}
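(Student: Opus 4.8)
The plan is to establish existence and uniqueness separately, using the convexity of the distance function throughout. For existence, I would define the function $F(p) = |pv_1| + |pv_2| + |pv_3|$ on $\HH$ and study its critical points, since a balanced point is precisely where the three unit vectors pointing from $p$ toward $v_1, v_2, v_3$ sum to zero, i.e.\ where the gradient of $F$ vanishes. The convexity lemma implies $F$ is strictly convex on $\HH$ (being a sum of three strictly convex functions $p \mapsto |pv_i|$), and since $F \to \infty$ as $p$ leaves every compact set, $F$ attains a unique global minimum at some point $\widetilde{p}$. The key dichotomy is whether this minimizer is one of the vertices or an interior critical point: at a vertex $v_i$, the one-sided directional derivatives of $F$ determine minimality, and a standard first-variation computation shows the minimum is \emph{not} attained at $v_i$ exactly when the interior angle at $v_i$ is strictly less than $2\pi/3$. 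This is where the $2\pi/3$-acuteness condition enters: when every angle is $< 2\pi/3$, the minimizer avoids all three vertices, so it is a smooth critical point, and the vanishing-gradient condition forces the three angles $\angle v_ipv_{i+1}$ at $\widetilde{p}$ to be equal, hence each $2\pi/3$.

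Conversely, I would show that if the triangle is $2\pi/3$-obtuse, say the angle at $v_1$ is $\ge 2\pi/3$, then no balanced point can exist: the directional derivative of $F$ at $v_1$ into the $\Delta$-domain is nonnegative, so $v_1$ is the minimizer, and since $F$ has a unique critical point (by strict convexity the minimizer is the only point where the subdifferential contains $0$), there is no interior point with vanishing gradient, hence no balanced point. The equivalence of ``balanced'' and ``critical interior point'' deserves care: I would verify that the equal-angle condition \eqref{balance} is exactly the Euler--Lagrange / zero-gradient condition, using that in $\HH$ the gradient of $p \mapsto |pv_i|$ is the inward unit tangent vector at $p$ along the geodesic $pv_i$, so $\nabla F(p) = 0$ means three unit vectors at $p$ sum to zero, which in the tangent plane forces them to be at mutual angles $2\pi/3$.

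For the location claim, that $\widetilde{p}$ lies in the $\Delta$-domain, I would argue that the minimizer of $F$ cannot lie outside the closed triangle: if $p$ were in the unbounded complementary component, one could decrease $F$ by moving toward the triangle, since the foot of the perpendicular projection onto the convex hull of the vertices strictly decreases all (or the relevant) distances. More directly, convexity of each $|pv_i|$ and the fact that $\HH$ is $\mathrm{CAT}(0)$ let me project onto the convex triangle and conclude the minimizer lies in the closed convex hull; combined with the vertex analysis ruling out the three corners, the minimizer lies in the open $\Delta$-domain.

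The hard part will be the first-variation analysis at the vertices, establishing the precise threshold $2\pi/3$ for the one-sided derivative to change sign. Concretely, if $\theta$ is the interior angle at $v_1$ and we move a small distance $\epsilon$ from $v_1$ in a direction making angle $\psi$ with one side, the contribution $|v_1 p|$ grows like $+\epsilon$ while each of $|pv_2|, |pv_3|$ changes like $-\epsilon\cos(\text{angle})$ to first order; the directional derivative of $F$ at $v_1$ is then $1 - \cos\psi - \cos(\theta - \psi)$, and minimizing this over $\psi$ yields a minimal value that is nonnegative precisely when $\theta \ge 2\pi/3$. Verifying that this first-order Euclidean-flavored computation survives in the hyperbolic setting (the first variation of hyperbolic distance from a fixed point has the same cosine form) is the main technical point, but it follows from the standard first variation formula for arc length, so no curvature correction appears at first order. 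Once this threshold is pinned down, existence, uniqueness, and location all follow from the strict convexity of $F$.
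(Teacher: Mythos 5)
Your proposal is correct, but it takes a genuinely different route from the paper. The paper's proof of Proposition \ref{balancedp} is entirely synthetic: a balanced point is forced into the $\Delta$-domain because for $p$ outside it the three angles satisfy $\angle v_{j-1}pv_{j}+\angle v_{j}pv_{j+1}=\angle v_{j-1}pv_{j+1}$ for some $j$, which is incompatible with all three being $2\pi/3$; the ``only if'' direction and uniqueness both come from the hyperbolic exterior-angle inequality (the chain $\angle v_2pv_3>\angle v_2qv_3>\angle v_2v_1v_3\ge 2\pi/3$); and existence in the acute case comes from intersecting two $2\pi/3$-equiangular curves inside the triangle. No length-sum function and no calculus appear. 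You instead characterize balanced points as the interior critical points of $F(p)=|pv_1|+|pv_2|+|pv_3|$, and run convexity plus a first-variation analysis at the vertices, with the threshold $1-2\cos(\theta/2)\ge 0$ exactly when $\theta\ge 2\pi/3$. This is legitimate, and it buys more than the paper's argument: your minimizer dichotomy is precisely the content of Theorem \ref{Fermat}, so you get the Fermat point theorem essentially for free -- this is close to the alternative route the authors themselves sketch in Remark \ref{rmk}. What the paper's route buys is elementarity and reusability: the exterior-angle and trigonometric arguments it develops (notably Lemma \ref{anglehyp2} and the cosine-rule manipulations) are recycled later for triples of geodesics in $\HHH$, where your two-dimensional gradient picture would have to be redone.

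Two points in your write-up need repair, though neither is fatal. First, the summands $p\mapsto|pv_i|$ are \emph{not} strictly convex: along any geodesic through $v_i$ the restriction is $t\mapsto|t-t_0|$. Strict convexity of $F$ -- which is what carries your uniqueness claim -- holds only because $v_1$, $v_2$, $v_3$ are not collinear, so every geodesic misses at least one vertex and at least one summand is strictly convex along it; this should be stated. Second, your location argument rules out the three corners but says nothing about the open edges of $\Delta$. The gap closes in one line (a point on the open edge $v_iv_j$ has $\angle v_ipv_j=\pi\neq 2\pi/3$, so it cannot be balanced; equivalently the gradient there is the nonzero unit vector pointing away from the third vertex), but as written the step from ``not a corner'' to ``in the open $\Delta$-domain'' is incomplete.
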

\begin{proof}
	We notice that if a point $p$ is contained in the complement of the $\Delta$-domain, then we have $\angle v_{j-1}pv_{j}+\angle v_{j}pv_{j+1}=\angle v_{j-1}pv_{j+1}$ for some $j$. Therefore to prove this proposition, it is enough to consider the points contained in the $\Delta$-domain.
	
	To prove the "only if" part, we assume that $\Delta$ is $2\pi/3$-obtuse and $\angle v_2v_1v_3\ge 2\pi/3$. Let $p$ be a point in the $\Delta$-domain. Let $q$ denote the intersection point between the geodesic containing $v_2p$ and the side $v_1v_3$.
	 
	\begin{center}
		\includegraphics[scale=1.0]{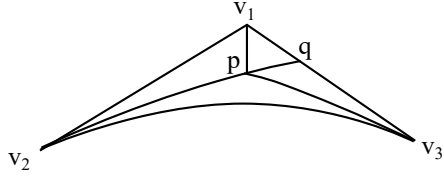}
		\captionof{figure}{$2\pi/3$-obtuse}
	\end{center}
	
	We then have the following relation:
	\begin{equation}
		\angle v_2pv_3>\angle v_2qv_3>\angle v_2v_1v_3\ge\frac{2\pi}{3}.
	\end{equation}
	Therefore $p$ cannot be balanced, hence the " only if " part.
	
	Now we assume that $\Delta$ is $2\pi/3$-acute. We may assume that $\angle v_2v_3v_1<\pi/3$. Consider the side $v_2v_3$. For any angle $\alpha\in(0,\pi)$. We say that a path in $\HH$ is $\alpha$-equiangular with respect to $v_2v_3$ if  $\angle v_2pv_3$ is constant (say $=\alpha$) when $p$ moves along this path. One may check that for each angle $\alpha$, there are two equiangular paths, symmetric with respect to $v_2v_3$. Moreover each one of them is an open path with endpoints at $v_2$ and $v_3$, and bounds a convex subset together with $v_2v_3$. Since we have:
	\begin{eqnarray*}
		\angle v_2v_3v_1&<&\pi/3,\\
		\angle v_1v_2v_3&<&2\pi/3,
	\end{eqnarray*}	one of the $2\pi/3$-equiangular paths of $v_2v_3$ intersects $v_1v_3$. For same reason,  one of the $2\pi/3$-equiangular paths with respect to $v_1v_3$ intersects $v_2v_3$. Therefore these two paths intersect in the $\Delta$-domain. The intersection point is a balanced point for $\Delta$. 
	
	\begin{center}
		\includegraphics[scale=1.0]{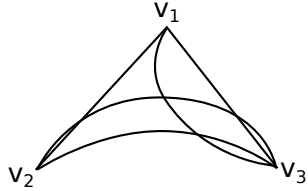}
		\captionof{figure}{$2\pi/3$-acute}
	\end{center}
	
	Let $\widetilde{p}$ be a balanced point of $\Delta$. Notice that the segments $\widetilde{p}v_1$, $\widetilde{p}v_2$ and $\widetilde{p}v_3$ separate $\Delta$ into three $2\pi/3$-obtuse triangles. Using the same argument as in the " only if " part, we can show that $\widetilde{p}$ is the only balanced point.
\end{proof}

\subsection{The Fermat Point of a compact triangle}
Consider $\Delta$ the compact triangle in the hyperbolic plane $\HH$ with vertices $v_1$, $v_2$ and $v_3$. We define a function $L:\HH\rightarrow\RR_{>0}$ by sending each point $p$ to $L(p)=|v_1p|+|v_2p|+|v_3p|$.
\begin{defn}
	The {\textit{Fermat point}} of $\Delta$ is a point in $\HH$ realizing the minimum of the function $L$.
\end{defn}

\begin{thm}\label{Fermat}
	The Fermat point of the triangle $\Delta$ exists and is unique. Moreover,
	\begin{enumerate}
		\item if $\Delta$ is $2\pi/3$-acute, then its balanced point is its Fermat point;
		\item if $\Delta$ is $2\pi/3$-obtuse, then its Fermat point is the vertex with the biggest internal angle.
	\end{enumerate}
\end{thm}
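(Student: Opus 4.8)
The plan is to dispatch existence and uniqueness by soft convexity arguments, and then to locate the minimizer by a first-order analysis that splits into exactly the two cases governed by the balanced-point dichotomy of Proposition~\ref{balancedp}. For existence, I would note that $L$ is continuous and proper on $\HH$: since $L(p)\ge |v_1p|\to\infty$ as $p$ leaves every compact set, the sublevel sets of $L$ are bounded and hence compact, so $L$ attains its infimum. For uniqueness the key input is the Convexity Lemma, which makes each $p\mapsto |v_ip|$ convex along every geodesic of $\HH$, and strictly convex along any geodesic that does not pass through $v_i$ (the equality clause of the lemma fails once the four relevant points are not collinear). Because $v_1,v_2,v_3$ are not collinear, every geodesic misses at least one of them, so at least one summand of $L$ is strictly convex along it; hence $L$ is strictly convex along every geodesic, and a strictly convex function on a geodesic space has at most one minimizer.

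It remains to locate the minimizer. Away from the vertices $L$ is smooth, and the gradient of $|v_ip|$ at such a point $p$ is the unit tangent vector $u_i$ pointing along the geodesic from $v_i$ through $p$ and away from $v_i$; the angle between $u_i$ and $u_j$ equals $\angle v_ipv_j$. Thus a non-vertex critical point satisfies $u_1+u_2+u_3=0$, and three unit vectors in a tangent plane sum to zero precisely when their pairwise angles are all $2\pi/3$, i.e. exactly when $p$ is a balanced point. In the $2\pi/3$-acute case Proposition~\ref{balancedp} produces the unique balanced point $\widetilde{p}$, which by this computation is a critical point of the smooth, convex function $L$; for a convex function an interior critical point is a global minimizer, so $\widetilde{p}$ is the Fermat point, giving conclusion~(1).

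In the $2\pi/3$-obtuse case Proposition~\ref{balancedp} says there is no balanced point, so the minimizer cannot be a non-vertex point and, being a minimizer of the convex $L$, must be one of $v_1,v_2,v_3$. I would then verify directly that the vertex $v_1$ carrying the largest interior angle $\theta_1=\angle v_2v_1v_3\ge 2\pi/3$ is a minimizer, via a one-sided directional-derivative test. At $v_1$ the nonsmooth term $|v_1p|$ contributes $1$ to the directional derivative in every unit direction $w$, while the two smooth terms contribute $\langle u_2+u_3,w\rangle$, where now $u_2,u_3$ are the unit vectors at $v_1$ pointing away from $v_2,v_3$, with angle $\theta_1$ between them. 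Hence $D_wL(v_1)=1+\langle u_2+u_3,w\rangle\ge 1-|u_2+u_3|$, and $|u_2+u_3|^2=2+2\cos\theta_1\le 1$ exactly because $\theta_1\ge 2\pi/3$; so $D_wL(v_1)\ge0$ for all $w$. For a convex function this makes $v_1$ a global minimizer, and uniqueness forces it to be the Fermat point, which is conclusion~(2). The hyperbolic angle-sum bound forbids two angles $\ge 2\pi/3$, so this vertex is unambiguous.

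The step I expect to be the main obstacle is the first-order bookkeeping at a vertex, where $L$ fails to be differentiable: one must argue with one-sided directional derivatives, verify that the term $|v_1p|$ has directional derivative identically equal to $1$ at $v_1$, and correctly orient the vectors $u_2,u_3$ so that the inequality $|u_2+u_3|\le 1$ translates into the angle condition $\theta_1\ge 2\pi/3$. Everything else is a routine consequence of strict convexity together with Proposition~\ref{balancedp}.
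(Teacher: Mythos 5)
Your proposal is correct, but it takes a genuinely different route from the paper's written proof. The paper argues synthetically: it first reduces to the closure of the $\Delta$-domain (Lemma \ref{out}), then in the $2\pi/3$-obtuse case proves $L(p)>L(v_1)$ directly from the hyperbolic cosine rule, splitting $|pv_1|=tc+(1-t)c$ and choosing $t\in(0,1)$ so that two coupled $\sinh/\cosh$ inequalities hold simultaneously; in the $2\pi/3$-acute case it combines Lemma \ref{anglehyp2} (the three-rays-at-$2\pi/3$ inequality) with the obtuse case applied to the three sub-triangles into which the segments from the balanced point to the vertices cut $\Delta$. Your argument is variational instead: existence by coercivity, uniqueness by strict geodesic convexity of $L$ (correctly extracted from the equality clause of the Convexity Lemma by taking $p_1=p_2=v_i$, and using that a geodesic can contain at most two of the three non-collinear vertices), and localization by first-order analysis — non-vertex critical points are exactly balanced points via the gradient of the distance function, and the vertex case is handled by one-sided directional derivatives, where $D_wL(v_1)\ge 1-|u_2+u_3|\ge 0$ encodes precisely the condition $\angle v_2v_1v_3\ge 2\pi/3$. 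This is close in spirit to the paper's own Remark \ref{rmk}, which sketches existence via the extreme value theorem, uniqueness via the Convexity Lemma, and localization via Lemma \ref{anglehyp2}; but your localization replaces that synthetic lemma with a Riemannian gradient computation (first variation of arc length), which the paper never develops. What your route buys: it is shorter, conceptually cleaner, and generalizes verbatim to $\mathbb{H}^n$ or any Hadamard manifold. What the paper's route buys: it is self-contained at the level of hyperbolic trigonometry, and — more importantly for this paper — its technical ingredients (Lemma \ref{anglehyp2} and the $t$-splitting trick) are exactly the tools reused later in the proof of Theorem \ref{fpfortriple} for triples of geodesics in $\HHH$ and in the Steiner-tree analysis, where the ambient minimization is over distances to geodesics rather than points and your pointwise gradient computation would need to be redone.
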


We first prove the following lemma:
\begin{lem}\label{out}
	If $p$ lies outside the closure of $\Delta$-domain, we can always find a point $q$ on $\Delta$, such that $L(p)>L(q)$.
\end{lem}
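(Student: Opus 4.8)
The plan is to use the nearest-point projection of $p$ onto the closed solid triangle, and to show that this projection strictly decreases the distance to every vertex. Let $K$ denote the closure of the $\Delta$-domain; this is a compact convex subset of $\HH$, being the intersection of the three closed half-planes bounded by the geodesics carrying the sides of $\Delta$. Since the function $x\mapsto |px|$ is continuous and $K$ is compact, it attains a minimum on $K$ at some point $q$; by the convexity lemma this function is strictly convex along every geodesic of $\HH$, so the minimizer $q$ is unique. Because $p$ lies outside $K$, the point $q$ cannot lie in the interior of $K$ (otherwise moving slightly from $q$ toward $p$ would stay in $K$ and decrease the distance to $p$), so $q\in\partial K=\Delta$ is a genuine point of $\Delta$, and $|pq|>0$.

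The key geometric input is the first-order optimality (obtuse-angle) property of $q$: for every point $y\in K$, the geodesic segment $[q,y]$ lies in $K$ by convexity, and since $q$ minimizes the distance to $p$ over $K$, this distance cannot decrease as one moves from $q$ toward $y$. By the first variation of arc length this forces $\angle pqy\ge \pi/2$. I would apply this with $y=v_i$ for each of the three vertices $v_1,v_2,v_3$ of $\Delta$, all of which lie in $K$.

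With the obtuse-angle property in hand, strictness is a one-line computation. If $q\ne v_i$, the hyperbolic law of cosines in the triangle $pqv_i$ gives
\[
\cosh|pv_i|=\cosh|pq|\,\cosh|qv_i|-\sinh|pq|\,\sinh|qv_i|\cos\angle pqv_i\ge \cosh|pq|\,\cosh|qv_i|>\cosh|qv_i|,
\]
using $\cos\angle pqv_i\le 0$ and $\cosh|pq|>1$; hence $|pv_i|>|qv_i|$. If $q=v_i$ then trivially $|pv_i|=|pq|>0=|qv_i|$. In either case $|pv_i|>|qv_i|$ for every $i$, and summing yields $L(p)=|v_1p|+|v_2p|+|v_3p|>|v_1q|+|v_2q|+|v_3q|=L(q)$, which is the desired inequality.

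I expect the main obstacle to be making the existence, uniqueness, and obtuse-angle property of the projection fully rigorous from the convexity lemma alone, rather than by citing general $\mathrm{CAT}(0)$ theory; the variational argument for the angle bound is where the convexity of the metric is genuinely used, and one must be slightly careful when $q$ is a vertex of $\Delta$ (so that $[q,v_i]$ degenerates for that index) or when $q$ lies in the relative interior of a side. All of these situations are covered by the case split above, so the only ingredient needed beyond the convexity lemma is the standard first-order optimality condition for a minimizer over a convex set.
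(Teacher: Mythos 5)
Your proof is correct, but it takes a genuinely different route from the paper's. The paper argues by a case analysis on the position of $p$: either the segment $pv_j$ crosses the opposite side $v_{j-1}v_{j+1}$ for some $j$, in which case the crossing point serves as $q$ (one distance drops because $q$ lies strictly between $p$ and $v_j$, and the other two drop in total by the triangle inequality in $\Delta pv_{j-1}v_{j+1}$); or some vertex $v_j$ lies in the $\Delta pv_{j-1}v_{j+1}$-domain, in which case $q=v_j$ works (the three angles at $v_j$ sum to $2\pi$, so $\angle pv_jv_{j-1}$ or $\angle pv_jv_{j+1}$ exceeds $\pi/2$, making the corresponding side of $\Delta$ shorter than $|pv_{j\pm1}|$). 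You instead take the nearest-point projection $q$ of $p$ onto the closed solid triangle $K$, note that $q$ must land on $\partial K=\Delta$, and combine the obtuse-angle property of the projection with the cosine rule to get the stronger, termwise conclusion $|qv_i|<|pv_i|$ for every $i$. Your approach buys three things: it avoids having to check that the paper's two cases exhaust all positions of $p$ (the paper asserts this dichotomy without proof); it gives strict decrease of each distance rather than only of their sum; and it works verbatim in $\HHH$ and for any finite set of points in a convex region, which is close in spirit to how the lemma is reused later (compare the reduction step in the proof of Theorem \ref{fpfortriple}). Two small remarks. First, your appeal to the first variation of arc length can be kept entirely within the paper's toolkit: for the unit-speed geodesic $c(t)$ from $q$ toward $y\in K$, minimality and the cosine rule give
\begin{equation*}
\cosh|pq|\le\cosh|pc(t)|=\cosh|pq|\cosh t-\sinh|pq|\sinh t\cos\angle pqy,
\end{equation*}
and letting $t\to0^+$ yields $\cos\angle pqy\le0$. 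Second, $x\mapsto|px|$ is convex but not \emph{strictly} convex along geodesics through $p$, so uniqueness of $q$ as you state it actually needs the equality case of the convexity lemma (two distinct minimizers would force $p$ to be their midpoint, hence inside $K$); since uniqueness is never used in your argument, this does not affect correctness.
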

\begin{proof}
	Assume that $p$ lies outside $\Delta$. There are two possible cases:
	\begin{enumerate}
		\item there exists $j$ such that $pv_j$ intersects $v_{j-1}v_{j+1}$;
		\item there exists $j$ such that the vertex $v_j$ lies in the $\Delta pv_{j-1}v_{j+1}$-domain.
	\end{enumerate}
	
	Without loss of generality, in the following discussion, we assume that $j=3$ in both cases. In the first case, we have
	\begin{equation*}
		|pp_3|>|qp_3|.
	\end{equation*}
		
	\begin{center}
		\includegraphics[scale=1.0]{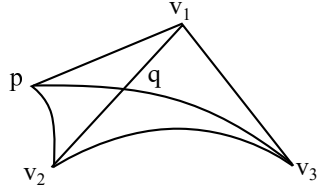}
		\captionof{figure}{First case}
	\end{center}
	
	At the same time, we have the triangular inequality for the triangle $\Delta pv_1v_2$:
	\begin{equation*}
	    |pv_1|+|pv_2|>|v_1v_2|=|qv_1|+|qv_2|.
	\end{equation*}
	Hence, we have $L(p)>L(q)$ in the first case.
		
\medskip
		
	In the second case, we have:
	\begin{equation*}
		\angle pv_3v_1+\angle pv_3v_2+\angle v_1v_3v_1=2\pi.
	\end{equation*}
	\begin{center}
		\includegraphics[scale=1.0]{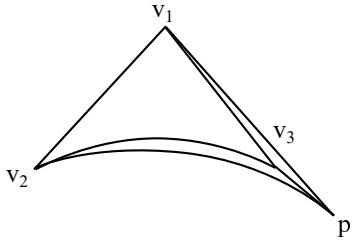}
		\captionof{figure}{Second case}
	\end{center}
	
	Therefore, either $\angle pv_3v_1$ or $\angle pv_3v_2$ is bigger than $\pi/2$. We assume that it is $\angle pv_3v_1$. Then we have:
	\begin{equation*}
	|v_1v_3|<|pv_1|.
	\end{equation*}
	On the other hand, by the triangle inequality for $\Delta pv_3v_2$, we have:
	\begin{equation*}
	|v_3v_2|<|pv_3|+|pv_2|.
	\end{equation*}
    Therefore, we have $L(p)>L(v_3)$ in the second case.
\end{proof}

\medskip

\begin{proof}[Proof of Theorem \ref{Fermat}]
	By the above lemma, to show the existence and uniqueness of the minimum point for the function $L$, it is enough to consider the points contained in the closure of the $\Delta$-domain. We prove the theorem by showing that in each case, the point mentioned in the statement of the theorem has strictly smaller $L$-value than any other point in the closure of the $\Delta$-domain.
	
	\smallskip
	
	\noindent \textbf{Case I: $\Delta$ is $2\pi/3$-obtuse.} Without loss of generality, let $v_1$ be the vertex of $\Delta$ with interior angle bigger than  or equal to $2\pi/3$. Let $p$ be a point in the closure of $\Delta$-domain different from $v_1$. Then the theorem is equivalent to saying that for any point $p$ in the $\Delta$-domain different from $v_1$ satisfies:
	\begin{equation*}
		L(p)>L(v_1).
	\end{equation*} 
	
	Let us consider the angles $\angle v_2v_1p$ and $\angle v_3v_1p$ and denote them by $\alpha_p$ and $\beta_p$ respectively for simplification of notation. By assumption, we have
	\begin{equation*}
		\alpha_p+\beta_p=\angle v_2v_1v_3\ge\frac{2\pi}{3}.
	\end{equation*}
	\begin{center}
		\includegraphics[scale=1.0]{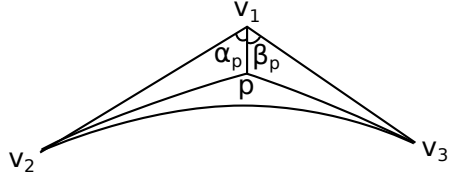}
		\captionof{figure}{$\alpha_p$ and $\beta_p$}
	\end{center}

	We first discuss the case where one of them is bigger or equal to $\pi/2$. Without loss of generality, we assume that $\alpha_p\ge\pi/2$. Then it is easy to check:
	\begin{eqnarray*}
		|pv_2|&\ge&|v_1v_2|;\\
		|pv_1|+|pv_3|&\ge&|v_1v_3|.
	\end{eqnarray*} 
	Moreover the two equalities can be realized at the same time if and only if $p=v_1$. Therefore, if $p\neq v_1$, we have
	\begin{equation*}
		L(p)>L(v_1).
	\end{equation*}	
	
	Secondly, we observe that if $|pv_1|\ge|v_1v_2|$ or $|pv_1|\ge|v_1v_3|$, since $|v_2v_3|$ is strictly bigger than $|v_1v_2|$ and $|v_1v_3|$, we have again
	\begin{equation*}
	L(p)>L(v_1).
	\end{equation*}	
	
	Now we discuss the remaining case. We may assume that
	\begin{eqnarray*}
	&&\alpha_p<\pi/2;\\
	&&\beta_p<\pi/2;\\
	&&|pv_1|<|v_1v_2|;\\
	&&|pv_1|<|v_1v_3|.
	\end{eqnarray*}
	By the cosine rule for $\Delta v_1pv_2$ and $\Delta v_1pv_3$, we have the following two identities:
	\begin{eqnarray}
	\cosh |pv_2|&=&\cosh|v_1v_2|\cosh|pv_1|-\sinh|v_1v_2|\sinh|pv_1|\cos\alpha_p;\label{left}\\
	\cosh |pv_3|&=&\cosh|v_1v_3|\cosh|pv_1|-\sinh|v_1v_3|\sinh|pv_1|\cos\beta_p.\label{right}
	\end{eqnarray}
	Our goal is to show that there is $t\in(0,1)$ such that
	\begin{eqnarray}
	\cosh|pv_2|&\ge&\cosh((1-t)|pv_1|)\cosh(|v_1v_2|-t|pv_1|);\label{left1}\\
	\cosh|pv_3|&\ge&\cosh (t|pv_1|)\cosh(|v_1v_3|-(1-t)|pv_1|).\label{right1}
	\end{eqnarray}
	To simplify the notation, we set
	\begin{eqnarray*}
		&&a'=|v_1v_2|;\\
		&&b'=|v_1v_3|;\\
		&&a=|pv_2|;\\
		&&b=|pv_3|;\\
		&&c=|pv_1|.
	\end{eqnarray*}
	We rewrite the right hand side of (\ref{left}) as follows:
	\begin{eqnarray*}
		&&\cosh a\\
		&=&\cosh a'\cosh(tc+(1-t)c)-\sinh a'\sinh(tc+(1-t)c)\cos\alpha_p\\
		&=&\cosh a'\Big(\cosh (tc)\cosh((1-t)c)+\sinh (tc)\sinh((1-t)c)\Big)-\\
		&&-\sinh a'\Big(\sinh (tc)\cosh((1-t)c)+\cosh (tc)\sinh((1-t)c)\Big)\cos\alpha_p\\
		&=&\cosh((1-t)c)\cosh(a'-tc)+(1-\cos\alpha_p)\sinh (tc)\sinh(a'+(1-t)c)-\\
		&&-\cos\alpha_p\sinh((1-t)c)\sinh(a'-tc).
	\end{eqnarray*}
	Similarly, the right hand side of (\ref{right}) can be rewritten as:
	\begin{eqnarray*}
		&&\cosh b\\
		&=&\cosh (tc)\cosh(b'-(1-t)c)+\cos\beta_p\sinh((1-t)c)\sinh(b'+tc)\\
		&&-(1-\cos\beta_p)\sinh(tc)\sinh(b'-(1-t)c).
	\end{eqnarray*}
		
	To show (\ref{left1}) and (\ref{right1}) hold, it is enough to show that there exists $t\in(0,1)$ such that:
	\begin{equation}\label{middle}
	0<\frac{\cos\alpha_p}{1-\cos\alpha_p}\le\frac{\sinh(tc)}{\sinh((1-t)c)}\le\frac{1-\cos\beta_p}{\cos\beta_p}<\infty.
	\end{equation} 
	If we remove the middle term, the remaining relations hold. To see this, we notice that
	\begin{eqnarray*}
		\frac{\pi}{6}<\alpha_p<\frac{\pi}{2},\\
		\frac{\pi}{6}<\beta_p<\frac{\pi}{2},
	\end{eqnarray*}
	hence, the above relation is equivalent to
	\begin{equation*}
		\cos\alpha_p+\cos\beta_p\le1,
	\end{equation*}
	which moreover can be written as
	\begin{equation*}
		2\cos\frac{\alpha_p+\beta_p}{2}\cos\frac{\alpha_p-\beta_p}{2}\le1.
	\end{equation*}
	The above inequality holds, since we have:
	\begin{equation*}
		\frac{2\pi}{3}\le\alpha_p+\beta_p\le\pi.
	\end{equation*}

	On the other hand, the function
	\begin{equation*}
		\frac{\sinh(tc)}{\sinh((1-t)c)}
	\end{equation*}
	has image $(0,\infty)$ for $t\in(0,1)$ and is monotonically increasing as $t$ grows. Hence we have the existence of $t$ for (\ref{middle}) to hold. Using the inequalities (\ref{left1}) and (\ref{right1}) and the monotonicity of the hyperbolic cosine function, we obtain moreover that for some $t\in(0,1)$:
	\begin{eqnarray*}
		a&\ge& a'-tc;\\
		b&\ge& b'-(1-t)c,
	\end{eqnarray*}
	where the two equalities hold if and only if when $p=v_1$. In that case, we have $c=0$. Hence we have
	\begin{equation*}
		L(p)=a+b+c>a'+b'=L(p_1),
	\end{equation*}
	for any $p$ in the closure of $\Delta$-domain different from $v_1$.

	\medskip
    
    \noindent\textbf{Case II: $\Delta$ is $2\pi/3$-acute.} We first prove the following lemma:
    \begin{lem}\label{anglehyp2}
    	Let $q$ be a point in $\HH$. Let $\mathsf{r}_1$, $\mathsf{r}_2$ and $\mathsf{r}_3$ be three distinct rays in $\HH$	starting at $q$ such that the angle between any two of them is $2\pi/3$. Let $q_1$, $q_2$ and $q_3$ be three points on $\mathsf{r}_1$, $\mathsf{r}_2$ and $\mathsf{r}_3$ respectively. Then,
    	\begin{equation*}
    	|qq_1|+|qq_2|+|qq_3|<|q_1q_2|+|q_1q_3|.
    	\end{equation*}
    \end{lem}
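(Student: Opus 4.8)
The plan is to reduce the lemma to a single two-triangle comparison via the hyperbolic law of cosines, exploiting that the prescribed angle $2\pi/3$ contributes the cosine value $\cos(2\pi/3)=-\tfrac12$. First I would write $a=|qq_1|$, $b=|qq_2|$, $c=|qq_3|$, noting that $q_1\neq q$ so that $a>0$. The rays $\mathsf{r}_1$ and $\mathsf{r}_2$ meet at $q$ at angle $2\pi/3$, so applying the law of cosines to the triangle with vertices $q,q_1,q_2$ gives
\begin{equation*}
\cosh|q_1q_2|=\cosh a\cosh b+\tfrac12\sinh a\sinh b,
\end{equation*}
and likewise $\cosh|q_1q_3|=\cosh a\cosh c+\tfrac12\sinh a\sinh c$.

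The heart of the argument is the pointwise estimate
\begin{equation*}
|q_1q_2|\;\ge\;|qq_2|+\tfrac12|qq_1|\;=\;b+\tfrac a2,
\end{equation*}
with equality only when $a=0$. To prove it I would compare the two sides after applying $\cosh$, which is increasing on $[0,\infty)$; since $\cosh(b+\tfrac a2)=\cosh b\cosh\tfrac a2+\sinh b\sinh\tfrac a2$, the difference of $\cosh|q_1q_2|$ and $\cosh(b+\tfrac a2)$ equals
\begin{equation*}
\cosh b\Bigl(\cosh a-\cosh\tfrac a2\Bigr)+\sinh b\Bigl(\tfrac12\sinh a-\sinh\tfrac a2\Bigr).
\end{equation*}
Both brackets are nonnegative for $a\ge 0$: the first because $\cosh$ is increasing and $a\ge\tfrac a2$, and the second because $\tfrac12\sinh a=\sinh\tfrac a2\cosh\tfrac a2\ge\sinh\tfrac a2$. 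When $a>0$ the first term is strictly positive (as $\cosh b\ge 1$), so $|q_1q_2|>b+\tfrac a2$.

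Applying the same estimate to the pair $(q_1,q_3)$ yields $|q_1q_3|>c+\tfrac a2$, and adding the two inequalities gives
\begin{equation*}
|q_1q_2|+|q_1q_3|>\bigl(b+\tfrac a2\bigr)+\bigl(c+\tfrac a2\bigr)=a+b+c=|qq_1|+|qq_2|+|qq_3|,
\end{equation*}
which is exactly the claim. The step I expect to require the most care is the key estimate itself: it is precisely the value $\cos(2\pi/3)=-\tfrac12$ that produces the coefficient $\tfrac12$ in the law of cosines and makes the target $b+\tfrac a2$ split the distance $a$ evenly between the two inequalities, so the $2\pi/3$ hypothesis is used essentially and a smaller angle would break the bound. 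Should the algebra prove delicate, an alternative route is a first-variation argument: fixing $q_2,q_3$ and moving $q_1$ outward along $\mathsf{r}_1$ at unit speed, the derivative of $|q_1q_2|+|q_1q_3|-a$ equals $\cos\beta_2+\cos\beta_3-1$, where $\beta_2,\beta_3$ are the angles at $q_1$; since the angle sum of a hyperbolic triangle is strictly less than $\pi$ while the angle at $q$ is $2\pi/3$, each $\beta_i<\pi/3$, so this derivative is positive and the expression increases strictly from its value $0$ at $q_1=q$.
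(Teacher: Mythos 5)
Your proof is correct and follows essentially the same route as the paper: both apply the hyperbolic law of cosines with $\cos(2\pi/3)=-\tfrac12$ to the triangles $\Delta qq_1q_2$ and $\Delta qq_1q_3$, establish the key estimate $|q_1q_i|>|qq_i|+\tfrac12|qq_1|$ (the paper via a half-angle rewriting of $\cosh|q_1q_i|$, you via subtracting $\cosh\bigl(|qq_i|+\tfrac12|qq_1|\bigr)$ and checking both grouped terms are nonnegative), and then sum the two inequalities. The only blemish is in your optional first-variation sketch, where the quantity $|q_1q_2|+|q_1q_3|-|qq_1|$ equals $|qq_2|+|qq_3|$ at $q_1=q$, not $0$; this does not affect your main argument.
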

    \begin{proof}[Proof of Lemma \ref{anglehyp2}]
    	The proof uses  hyperbolic trigonometry as follows.
    	\begin{center}
    		\includegraphics[scale=0.6]{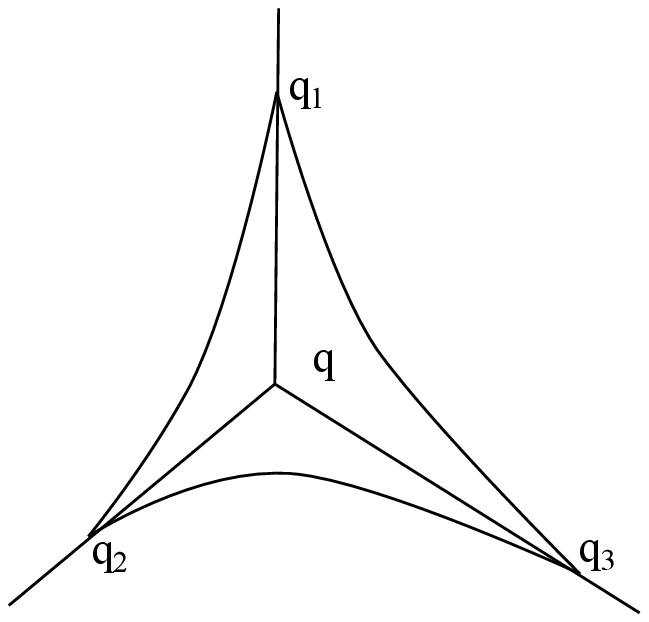}
    		\captionof{figure}{}
    	\end{center}
    	The cosine rule for $\Delta qq_1q_2$ and $\Delta qq_1q_3$ gives the following two identities:
    	\begin{eqnarray*}
    	\cosh|q_1q_2|=\cosh|qq_1|\cosh|qq_2|-\sinh|qq_1|\sinh|qq_2|\cos\frac{2\pi}{3};\\
    	\cosh|q_1q_3|=\cosh|qq_1|\cosh|qq_3|-\sinh|qq_1|\sinh|qq_3|\cos\frac{2\pi}{3}.
    	\end{eqnarray*}
    	Equivalently, we have:	
    	\begin{eqnarray*}
    	\cosh|q_1q_2|=\cosh|qq_1|\cosh|qq_2|+\frac{1}{2}\sinh|qq_1|\sinh|qq_2|;\\
    	\cosh|q_1q_3|=\cosh|qq_1|\cosh|qq_3|+\frac{1}{2}\sinh|qq_1|\sinh|qq_3|.
    	\end{eqnarray*}
    	Therefore we have:
    	\begin{eqnarray*}
    	\cosh|q_1q_2|=\cosh^2(\frac{1}{2}|qq_1|)\cosh|qq_2|+\sinh^2(\frac{1}{2}|qq_1|)\cosh|qq_2|\nonumber\\
    	+\sinh(\frac{1}{2}|qq_1|)\sinh|qq_2|\cosh(\frac{1}{2}|qq_1|);\\
    	\cosh|q_1q_3|=\cosh^2(\frac{1}{2}|qq_1|)\cosh|qq_3|+\sinh^2(\frac{1}{2}|qq_1|)\cosh|qq_3|\nonumber\\
    	+\sinh(\frac{1}{2}|qq_1|)\sinh|qq_3|\cosh(\frac{1}{2}|qq_1|).
    	\end{eqnarray*}
    	This implies:
    	\begin{eqnarray*}
    	\cosh|q_1q_2|=\cosh(\frac{1}{2}|qq_1|)\cosh(\frac{1}{2}|qq_1|+|qq_2|)+\sinh^2(\frac{1}{2}|qq_1|)\cosh|qq_2|;\\
    	\cosh|q_1q_3|=\cosh(\frac{1}{2}|qq_1|)\cosh(\frac{1}{2}|qq_1|+|qq_3|)+\sinh^2(\frac{1}{2}|qq_1|)\cosh|qq_3|.
    	\end{eqnarray*}
    	Since the hyperbolic cosine function is positve and strictly monotonically increasing on $\RR_{>0}$, we have the following two inequalities:
    	\begin{eqnarray*}
    	|q_1q_2|>\frac{1}{2}|qq_1|+|qq_2|;\\
    	|q_1q_3|>\frac{1}{2}|qq_1|+|qq_3|.
    	\end{eqnarray*}
    	
    	Taking the sum of the above two inequalities complete the proof of the lemma.
	\end{proof}
    
Now consider $\Delta$  and its balanced point $\widetilde{p}$. The three segments $v_1\widetilde{p}$, $v_2\widetilde{p}$ and $v_3\widetilde{p}$ separate the closure of the $\Delta$-domain into three closed subsets bounded by triangles $\Delta v_1v_2\widetilde{p}$, $\Delta v_2v_3\widetilde{p}$ and $\Delta v_1v_3\widetilde{p}$ respectively. It is sufficient to discuss the case where $p$ is contained in one of the three closed subsets. Without loss of generality, we assume $p$ is in the part bounded by $\Delta v_1v_2\widetilde{p}$. Moreover,  we may assume that $v_3{p}$ intersects $v_1\widetilde{p}$ at $q$, see figure below.

\begin{center}
	\includegraphics[scale=1.0]{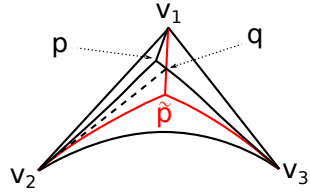}
	\captionof{figure}{$2\pi/3$-acute}
\end{center}

Since the triangle $\Delta v_1v_2\widetilde{p}$ is $2\pi/3$-obtuse, by the discussion for Case I above, we have
\begin{equation*}
	|v_1q|+|v_2q|\le|v_1p|+|v_2p|+|pq|,
\end{equation*}
where equality holds if and only if $p=q$. On the other hand, by  Lemma \ref{anglehyp2}, we have
\begin{equation*}
	|\widetilde{p}v_2|+|\widetilde{p}v_3|+|\widetilde{p}q|+|qv_1|\le|qv_2|+|qv_3|+|qv_1|,
\end{equation*}
where equality holds if and only if $q=\widetilde{p}$. This implies that $L(\widetilde{p})\le L(p)$ and  equality holds if and only if $p=\widetilde{p}$. Hence, $\widetilde{p}$ is the Fermat point of $\Delta$.
\end{proof}    
\begin{rmk}\label{rmk}
	Since the closed set bounded by $\Delta$ is compact and the function $L$ is continuous, another way to show the existence of the Fermat point is by using the extreme value theorem. Then  we have uniqueness by the convexity lemma. Furthermore, by Lemma \ref{anglehyp2}, for each case, only the point mentioned in the theorem can be a local minimum for $L$ from which we obtain the theorem.
\end{rmk}

\section{The Fermat Point and Steiner tree of a  triple of geodesics in \texorpdfstring{$\HHH$}{Lg}}
We would like to generalize the definition and discussion above to the Fermat point of a triple of geodesics in $\HHH$. More precisely, given three geodesics in $\HHH$, the Fermat point is defined to be the point realizing the minimum of the sum of the distances to the three geodesics among all points in $\HHH$. With the help of the Fermat point, we can find a graph connecting the three given geodesics with shortest length as possible, which will be called the Fermat tree.

 We will also be interested in the question whether the length of the double by the involution of $\HHH$ determined by one of the three geodesics is still shortest. Hence in the end of this section, we will introduce another modified length for a graph connecting a triple of geodesics which will be called the Steiner length. The graph with the shortest Steiner length will be called the Steiner tree for the given triple of geodesics. The result of this section will be used in the proof of the main theorem.

\subsection{Triple of geodesics of \texorpdfstring{$\HHH$}{Lg} in general position}
In the previous section, we considered compact triangles in $\HH$. To avoid the degenerate cases, we adopt the convention that the three vertices are distinct and do not lie on the same geodesic. Similarly, we would also like to avoid certain degenerate cases when studying triples of geodesics. For this purpose, we say that three geodesics in $\HHH$ are in {\textit{general position}} if 
\begin{enumerate}
	\item they are pairwise disjoint;
	\item each pair does not share any end point on the boundary of $\HHH$;
	\item they are not all orthogonal to some geodesic in $\HHH$.
\end{enumerate}
Such a triple will be called a {\textit{generic triple}}. 

We can embed $\HH$ into $\HHH$ isometrically. Then we can consider the three vertices of a triangle in $\HH$ as the intersection points between $\HH$ and three geodesics in $\HHH$ orthogonal to it respectively. Hence the triangle case discussed in the previous section can be considered as a special case of this part.

\medskip

\noindent \textbf{Convention:} In the reminder of the paper, all triples of geodesics that we consider will be generic, and we will omit the word "generic". The indices in the discussion will be considered up to mod $3$. For consistency with the triangle case, we will use the same notation.

\medskip

Let $(\gamma_1,\gamma_2,\gamma_3)$ be a triple of geodesics in $\HHH$. Let $p$ be a point in $\HHH$. We denote by $p_1$, $p_2$ and $p_3$ its orthogonal projections to $\gamma_1$, $\gamma_2$ and $\gamma_3$ respectively. 
\begin{defn}
	The point $p$ is said to be {\textit{planar}} with respect to $(\gamma_1,\gamma_2,\gamma_3)$ if one of the following conditions is satisfied:
	\begin{enumerate}
		\item $p$ is not contained in any of the three geodesics, and it is contained in the hyperbolic plane determined by its projections $p_1$, $p_2$ and $p_3$;
		\item $p$ is on $\gamma_j$ for some $j$, such that $\angle p_{j-1}pu=\angle p_{j+1}pu'$, where $\gamma_j=[u,u']$.
 	\end{enumerate}
\end{defn}

\begin{rmk}
	If $p$ satisfies the first condition in the above definition, the plane containing it and its projections are unique. This is because that the plane contains the triangle determined by its three projections. If this triangle is degenerate, there will be some geodesic orthogonal to the three geodesics, contradicting to the fact that the three geodesics are in general position.
\end{rmk}	
\begin{prop}
	There is a unique planar point of type $(2)$ for each geodesic $\gamma_j$ in the triple. 
\end{prop}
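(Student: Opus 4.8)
The plan is to fix $j$, say $j=1$ with $\gamma_1=[u,u']$, and reduce the statement to a one-variable monotonicity argument. Parametrize $\gamma_1$ by arclength as $p=p(t)$, $t\in\RR$, oriented so that $t\to-\infty$ limits to $u$ and $t\to+\infty$ to $u'$; write $e(t)$ for the unit tangent of $\gamma_1$ at $p(t)$ in the positive ($u'$) direction, and $w_2(t),w_3(t)$ for the unit tangents at $p(t)$ of the segments from $p(t)$ to its orthogonal projections $p_2,p_3$ onto $\gamma_2,\gamma_3$. Since the direction towards $u$ is $-e(t)$, we have $\angle p_3p(t)u=\pi-\angle(w_3(t),e(t))$ and $\angle p_2p(t)u'=\angle(w_2(t),e(t))$. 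Setting $\beta(t):=\angle(w_2(t),e(t))$ and $\chi(t):=\angle(w_3(t),e(t))$, the type $(2)$ condition $\angle p_3p(t)u=\angle p_2p(t)u'$ is therefore equivalent to $\beta(t)+\chi(t)=\pi$. So it suffices to show that $g:=\beta+\chi$ takes the value $\pi$ exactly once.

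The engine is the first variation of the distance to a geodesic. For $m\in\{2,3\}$ the function $d_m(t):=\mathrm{d}_{\HHH}(p(t),\gamma_m)$ is smooth, proper and strictly convex, with a unique minimum at the parameter $t_m$ of the foot of the common perpendicular of $\gamma_1$ and $\gamma_m$; concretely $\cosh d_m(t)=\cosh\rho_m\cosh(t-t_m)$, where $\rho_m>0$ is the distance between $\gamma_1$ and $\gamma_m$. The first variation formula gives $d_m'(t)=-\cos\angle(w_m(t),e(t))$, that is $\cos\beta(t)=-d_2'(t)$ and $\cos\chi(t)=-d_3'(t)$. Since $d_m$ is strictly convex, $d_m'$ is strictly increasing, and properness forces $d_m'(t)\to -1$ as $t\to-\infty$ and $d_m'(t)\to +1$ as $t\to+\infty$. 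Hence $\cos\beta$ and $\cos\chi$ are strictly decreasing from $+1$ to $-1$, so $\beta$ and $\chi$ are strictly increasing homeomorphisms $\RR\to(0,\pi)$. Consequently $g=\beta+\chi$ is continuous and strictly increasing with $\lim_{t\to-\infty}g=0$ and $\lim_{t\to+\infty}g=2\pi$, and by the intermediate value theorem there is a unique $t^\ast$ with $g(t^\ast)=\pi$. Then $p(t^\ast)$ is the unique type $(2)$ planar point on $\gamma_1$, and the identical argument applies to $\gamma_2$ and $\gamma_3$.

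The only substantive points are the first variation identity $d_m'=-\cos\angle(w_m,e)$, which must be set up with the correct sign, together with the strict convexity and properness of $d_m$; both are standard for the distance from a point to a disjoint geodesic, and I would state them explicitly (or simply derive $d_m'$ by differentiating the closed form for $\cosh d_m$). I would also emphasize where the general position hypotheses enter: pairwise disjointness of $\gamma_1$ with $\gamma_2$ and with $\gamma_3$ is what makes each $d_m$ strictly convex with $\rho_m>0$, while the absence of a shared endpoint is what makes $d_m$ proper with $d_m'\to\pm1$ at the two ends; a shared endpoint would render $d_m$ monotone and destroy either the existence or the uniqueness of the crossing $g(t^\ast)=\pi$. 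Condition $(3)$ in the definition of general position is not needed for this proposition, since only the pairwise interactions of $\gamma_1$ with the other two geodesics enter the argument.
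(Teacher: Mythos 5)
Your proof follows the same route as the paper's: the paper's entire argument is the assertion that, as $p$ moves along $\gamma_j$ from $u$ to $u'$, the angle $\angle p_{j+1}pu'$ increases strictly from $0$ to $\pi$ while $\angle p_{j-1}pu$ decreases strictly from $\pi$ to $0$, so the two angle functions cross exactly once. Your reformulation $\beta(t)+\chi(t)=\pi$, together with the first-variation identity $d_m'(t)=-\cos\angle(w_m(t),e(t))$ and the strict convexity of $d_m$, is precisely a justification of that monotonicity, which the paper simply asserts; in that respect your write-up is more complete than the original.

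However, two of your supporting claims are wrong as stated, though both are repairable without touching the structure. First, the closed form $\cosh d_m(t)=\cosh\rho_m\cosh(t-t_m)$ is not the distance from $p(t)$ to the geodesic $\gamma_m$: it is the hyperbolic Pythagorean theorem, i.e.\ the distance from $p(t)$ to the \emph{fixed point} $F_m$, the foot of the common perpendicular on $\gamma_m$. The distance to $\gamma_m$ is attained at the moving projection $p_m(t)$ and is smaller; when $\gamma_1$ and $\gamma_m$ are coplanar the correct formula is $\sinh d_m(t)=\sinh\rho_m\cosh(t-t_m)$ (the degenerate case of the two-right-angle quadrilateral cosine rule the paper invokes later in the same section), while for skew geodesics in $\HHH$ one gets $\sinh^2 d_m(t)=\alpha_m+\delta_m\cosh\bigl(2(t-t_m)\bigr)$ with $\delta_m>0$, the constants depending on the twist of the pair. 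In particular, your proposed shortcut of deriving $d_m'$ by differentiating your closed form would actually compute the variation of the distance to $F_m$, whose gradient points along the segment to $F_m$ rather than to $p_m(t)$; the angles you would then control are not the angles $\angle p_{j-1}pu$ and $\angle p_{j+1}pu'$ appearing in the definition of a type $(2)$ point, so that route proves a different statement. Second, ``properness forces $d_m'\to\pm1$'' is not a valid inference: a proper, smooth, strictly convex, $1$-Lipschitz function can have derivative limits $\pm\tfrac12$. The limits really are $\pm1$ here, but this uses the no-common-endpoint hypothesis (exactly as you flag at the end): since $u'$ is not an endpoint of $\gamma_m$, the projection $p_m(t)$ stays in a bounded set as $t\to+\infty$, whence $d_m(t)=t+O(1)$, and convexity plus the Lipschitz bound then force $d_m'\to1$; alternatively the limits can be read off the corrected closed form. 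With these two repairs your argument is complete and coincides with the paper's.
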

\begin{proof}
	Let $\gamma_j=[u,u']$. For a point $p\in\gamma_j$, the angle $\angle p_{j+1}pu'$ is strictly increasing from $0$ to $\pi$ when $p$ moves from $u$ to $u'$, while at the same time, the angle $\angle p_{j-1}pu$ is strictly decreasing from $\pi$ to $0$. Hence there is a unique point $p$ where we have $\angle p_{j+1}pu'=\angle p_{j-1}pu$.
\end{proof}	
We denote the unique planar point on $\gamma_j$ by $v_j$. We denote by $v_{jj-1}$ and $v_{jj+1}$ its orthogonal projections to $\gamma_{j-1}$ and $\gamma_{j+1}$ respectively. We borrow the terminology from the triangle case and give the following definition:
\begin{defn}
	The angle $\angle v_{jj-1}v_jv_{jj+1}$ is called the {\textit{internal angle}} for $\gamma_j$.
\end{defn}
Similar to the triangle case, we define
\begin{defn}
	A planar point $p$ in $\HHH$ is said to be a {\textit{balanced point}} for $(\gamma_1,\gamma_2,\gamma_3)$ if $p$ is not contained in any of the three geodesics and it has the following property:
	\begin{equation}
		\angle p_1pp_2=\angle p_2pp_3=\angle p_1pp_3=\frac{2\pi}{3}.
	\end{equation}
\end{defn}
As before, we use $\widetilde{p}$ to denote a balanced point for a triple of geodesics.

\begin{defn}
	A triple $(\gamma_1,\gamma_2,\gamma_3)$ is said to be {\textit{$2\pi/3$-acute}} if all internal angles are strictly smaller than $2\pi/3$. If there is one internal angle greater or equal to $2\pi/3$, it is called {\textit{$2\pi/3$-obtuse}}.
\end{defn}

To a triple $(\gamma_1, \gamma_2, \gamma_3)$, we can associate  a unique right angled hexagon $H$ by considering the common perpendicular geodesics for the pairs of geodesics in the triple. Since we assume that the triple is in general position, this right-angled hexagon is compact as a subset of $\HHH$. 
\begin{defn}
	The right angled hexagon $H$ associated to a triple $(\gamma_1,\gamma_2,\gamma_3)$ is said to be {\textit{$2\pi/3$-acute}} (resp. {\textit{$2\pi/3$-obtuse}}) if the triple is $2\pi/3$-acute (resp. $2\pi/3$-obtuse).
\end{defn}
\begin{rmk}\label{specialcase}
	Notice that it is possible that the right angled hexagon is degenerate, because some of its sides on $\gamma_1$, $\gamma_2$ and $\gamma_3$ may have $0$ length. In particular, if all these three sides have $0$ length, we get to the triangle case. Then the definitions of being $2\pi/3$-acute and being $2\pi/3$-obtuse coincide with those that we defined in the previous section for a triangle.
\end{rmk}

\subsection{Fermat point for a triple of geodesics}
For any point $p\in\HHH$, we denote by $d_1(p)$, $d_2(p)$ and $d_3(p)$ the distances from $p$ to $\gamma_1$, $\gamma_2$ and $\gamma_3$ respectively. We define the function $L:\HHH\rightarrow \RR$ by 
\begin{equation}\label{lengthf}
L(p)=d_1(p)+d_2(p)+d_3(p).
\end{equation}
By the convexity lemma, we can see that the function $L$ for the triangle case is a restriction of the one defined by (\ref{lengthf}) to the plane containing the triangle. Similar to the triangle case, we give the following definition:
\begin{defn}
	The {\textit{Fermat point}} for $(\gamma_1,\gamma_2,\gamma_3)$ is the point in $\HHH$ which realizes the minimum of the function $L$. The graph obtained by connecting the Fermat point to its orthogonal projections to $\gamma_1$, $\gamma_2$ and $\gamma_3$ is called the {\textit{Fermat tree}} for $(\gamma_1,\gamma_2,\gamma_3)$.
\end{defn}
Similar to the triangle case, we have
\begin{thm}
	The Fermat point for $(\gamma_1,\gamma_2,\gamma_3)$ exists and is unique.
\end{thm}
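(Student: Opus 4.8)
The plan is to deduce existence from properness of $L$ on $\HHH$ and uniqueness from strict convexity, exactly in the spirit of Remark \ref{rmk} for the triangle case; the Convexity Lemma does essentially all the work.

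First I would establish \emph{existence}. The function $L$ is continuous on $\HHH$, so it suffices to show it is proper, i.e. that $L(p)\to\infty$ whenever $p$ leaves every compact subset of $\HHH$. Since $\overline{\HHH}=\HHH\cup\partial\HHH$ is compact, such a $p$ subconverges to some ideal point $\xi\in\partial\HHH$, and I claim $L(p)\to\infty$. The key observation is that the closure of the $R$-neighborhood of a geodesic $\gamma_i$ meets $\partial\HHH$ only in the two endpoints of $\gamma_i$; hence if $\xi$ is not an endpoint of $\gamma_i$ then $d_i(p)\to\infty$ as $p\to\xi$. Because the triple is generic, the three geodesics have six distinct endpoints, so $\xi$ is an endpoint of at most one of them; consequently at least two of $d_1(p),d_2(p),d_3(p)$ tend to infinity and $L(p)\to\infty$. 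Thus the sublevel sets of $L$ are compact and $L$ attains its minimum.

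Next I would show that each $d_i$, and hence $L$, is convex along every geodesic segment. Fix a segment $pq$ and let $p_i,q_i$ be the orthogonal projections of $p,q$ onto $\gamma_i$. For $t\in(0,1)$ let $m$ be the point of $pq$ with $|pm|/|pq|=t$ and let $m_i'$ be the point of $p_iq_i$ with $|p_im_i'|/|p_iq_i|=t$. Applying the Convexity Lemma to the four points $p,q,q_i,p_i$ gives
\begin{equation*}
d_i(m)\le |mm_i'|\le (1-t)|pp_i|+t|qq_i|=(1-t)d_i(p)+t\,d_i(q),
\end{equation*}
where the first inequality holds because $m_i'\in\gamma_i$ while $d_i(m)$ is the distance to the \emph{nearest} point of $\gamma_i$. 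Summing over $i$ shows that $L$ is convex along $pq$.

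Finally, for \emph{uniqueness} I would upgrade convexity to strict convexity. If $p\neq q$ both minimized $L$, then $L$ would be constant, hence affine, along $pq$; since each $d_i$ is convex, this forces every $d_i$ to be affine along $pq$. Affineness of $d_i$ means equality holds throughout the chain above, and the equality clause of the Convexity Lemma requires the four points $p,q,q_i,p_i$ to be collinear; as $p_i,q_i\in\gamma_i$, this places the whole segment $pq$ on $\gamma_i$. But $pq$ can lie on at most one of the three distinct, pairwise disjoint geodesics, so at least two of the $d_i$ are in fact strictly convex along $pq$ — a contradiction. Hence $L$ is strictly convex and its minimizer is unique. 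The main technical point is precisely this last step: extracting from the equality case of the Convexity Lemma that a non-strictly-convex direction of $d_i$ must lie inside $\gamma_i$, and separately disposing of the degenerate possibility $p_i=q_i$, where one argues directly from the triangle inequality that $d_i$ cannot be affine unless $pq$ again runs along $\gamma_i$. Everything else is a routine combination of properness and convexity, and the finer structural description (balanced point versus the obtuse vertex case) then follows as in Theorem \ref{Fermat} by restricting to the planar sections.
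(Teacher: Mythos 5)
Your existence argument (properness of $L$, using the fact that the closure of an $R$-neighborhood of $\gamma_i$ meets $\partial\HHH$ only at the two endpoints of $\gamma_i$, plus the six endpoints being distinct) is correct and is essentially the paper's argument, which intersects the three $R$-neighborhoods to get a compact set and applies the extreme value theorem. Likewise, your derivation of convexity of each $d_i$ along geodesics from the Convexity Lemma is a correct repackaging of the paper's midpoint computation.

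The gap is in the uniqueness step, at exactly the point you yourself flag as "the main technical point". It is false that affineness of $d_i$ along $pq$ forces $pq\subset\gamma_i$: if the geodesic $\ell$ through $p$ and $q$ meets $\gamma_i$ orthogonally at a point $o$, and $p,q$ lie on the same side of $o$, then the nearest-point projection to $\gamma_i$ of every point $x\in pq$ is $o$ (the $\pi$-rotation about $\ell$ preserves $\gamma_i$ and fixes $x$, so the unique nearest point must be the fixed point $o$ of that rotation on $\gamma_i$); hence $d_i(x)=|xo|$ is exactly arclength along $pq$, which is affine, with $p_i=q_i=o$ and $pq\not\subset\gamma_i$. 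So your proposed "triangle inequality" disposal of the degenerate case $p_i=q_i$ cannot work, and your argument does not exclude the configuration in which the segment joining two minimizers is orthogonal to two, or even all three, of the geodesics. This is precisely why the paper's equality analysis concludes only that each $\gamma_j$ is \emph{either} equal to the geodesic through the two minimizers \emph{or orthogonal to it}, and then appeals to the full general-position convention: pairwise disjointness rules out the mixed case (a geodesic orthogonal to $\ell=\gamma_1$ would intersect $\gamma_1$), while condition (3) of general position (the three geodesics are not all orthogonal to one geodesic) rules out the all-orthogonal case. Tellingly, your proof never invokes condition (3), which is the hypothesis designed for exactly this situation. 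The gap is repairable — either adopt the correct dichotomy and use both general-position conditions as the paper does, or observe that in the all-orthogonal case each $d_i$ has slope $\pm1$ along $pq$, so $L$ has slope in $\{\pm3,\pm1\}$ and cannot be constant between two minima — but as written the uniqueness proof is incomplete.
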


\begin{proof}
	We use the extreme value theorem to prove this theorem.
		
	For a positive number $R\in\RR_{\ge0}$, we denote by $N_{j}(R)$ the $R$ neighborhood of $\gamma_j$. Since $\gamma_j$'s are disjoint in $\overline{\HHH}$, the set
	\begin{equation}
		N(R):=N_{1}(R)\cap N_{2}(R)\cap N_{3}(R)
	\end{equation}
	is compact for any $R$. Moreover, for $R<R'$, we have $N(R)\subset N(R')$.
	
	Let $R_H$ be the sum of the lengths of all sides of the right angled hexagon $H$ associated to $(\gamma_1,\gamma_2,\gamma_3)$. Then $N(R_H)$ is non-empty, since the points on $H\cap\gamma_1$ are contained in $N(R_H)$. Let $q$ be a point on $H\cap\gamma_1$, then we have $L(q)<R_H$. Therefore for any $p\in\HHH\setminus N(R_H)$, we always have $L(p)>R_H>L(q)$.
	
	Meanwhile, since $L$ is continuous on $\HHH$ and $N(R_H)$ is compact, we can use the extreme value theorem and get a point in $N(R_H)$ realizing the minimum of $L$ on $N(R_H)$. By the discussion above, this point also realizes the minimum of $L$ on $\HHH$. By definition, it is the Fermat point for $(\gamma_1,\gamma_2,\gamma_3)$.
	
	The uniqueness of the Fermat point follows from the convexity lemma. 
	
	Assume that there are two distinct points $p$ and $p'$ both realizing the minimum of $L$. Let $p_m$ denote the midpoint of the geodesic segment $pp'$. Then by the convexity lemma, for any geodesic $\gamma_j$ in the triple, we have:
	\begin{equation*}
		\mathrm{d}_j(p_m)\le \frac{1}{2}\mathrm{d}_j(p)+\frac{1}{2}\mathrm{d}_j(p'),
	\end{equation*}
	where  equality is attained when the geodesic passing through $p_1$ and $p_2$ is either $\gamma_j$ or orthogonal to $\gamma_j$. Therefore, we have 
	\begin{equation*}
		L(p_m)\le\frac{1}{2}L(p)+\frac{1}{2}L(p'),
	\end{equation*}
	where  equality holds if and only if for any $j=1,2,3$, the geodesic $\gamma_j$ is either the same as the geodesic passing through $p$ and $p'$, or orthogonal to it. Recall that as a convention, we only consider the triple $(\gamma_1,\gamma_2,\gamma_3)$ in general position. Hence, the inequality is strict, which contradicts the fact that $L(p_1)=L(p_2)$ is the minimal value of $L$ on $\HHH$.
\end{proof}
By considering the property of the Fermat point of a triangle, an immediate observation is:
\begin{ob}
	If $\widetilde{p}$ is the Fermat point of a triple $(\gamma_1,\gamma_2,\gamma_3)$, then it is planar and it is the Fermat point of the triangle $\Delta{\widetilde{p}_1\widetilde{p}_2\widetilde{p}_3}$. Therefore, if the Fermat point is not contained in any $\gamma_j$, it must be balanced. If it is on $\gamma_j$ for some $j$, then the angle $\angle\widetilde{p}_{j-1}\widetilde{p}\widetilde{p}_{j+1}$ is bigger than or equal to $2\pi/3$.
\end{ob}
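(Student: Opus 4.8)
The plan is to push the whole problem down to the planar situation governed by Theorem~\ref{Fermat}, using the first variation of $L=d_1+d_2+d_3$ to locate $\widetilde{p}$ relative to its projections $\widetilde{p}_1,\widetilde{p}_2,\widetilde{p}_3$. Concretely, I would first prove the planarity of $\widetilde{p}$ together with the assertion that $\widetilde{p}$ is the Fermat point of $\Delta\widetilde{p}_1\widetilde{p}_2\widetilde{p}_3$; once these are in hand, the statements about balancedness and the angle $\ge 2\pi/3$ are immediate consequences of Theorem~\ref{Fermat}. The argument splits according to whether $\widetilde{p}$ meets one of the three geodesics.

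Suppose first that $\widetilde{p}$ lies on none of $\gamma_1,\gamma_2,\gamma_3$. Then each $d_j$ is smooth near $\widetilde{p}$ and $\nabla d_j(\widetilde{p})=u_j$ is the unit vector in $T_{\widetilde{p}}\HHH$ along the perpendicular $\widetilde{p}_j\widetilde{p}$ pointing away from $\gamma_j$. Since $\widetilde{p}$ minimizes $L$, we get $u_1+u_2+u_3=\nabla L(\widetilde{p})=0$. Three unit vectors of a Euclidean three-space summing to zero must be coplanar and pairwise at angle $2\pi/3$: from $u_3=-(u_1+u_2)$ and $|u_3|=1$ one reads off $\langle u_1,u_2\rangle=-\tfrac12$, and $u_1,u_2$ are then independent so $u_3\in\mathrm{span}(u_1,u_2)$. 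Hence the three segments $\widetilde{p}\widetilde{p}_j$ lie in a single totally geodesic plane $P$ through $\widetilde{p}$, which shows $\widetilde{p}$ is planar of type $(1)$, and the pairwise angles give $\angle\widetilde{p}_i\widetilde{p}\widetilde{p}_k=2\pi/3$ for all pairs, i.e.\ $\widetilde{p}$ is balanced.

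If instead $\widetilde{p}\in\gamma_j$, then $\widetilde{p}_j=\widetilde{p}$ while $d_{j-1},d_{j+1}$ are still smooth near $\widetilde{p}$. Restricting $L$ to $\gamma_j=[u,u']$, along which $d_j\equiv 0$, the first-variation formula gives, for the unit motion toward $u$, $\tfrac{d}{dt}(d_{j-1}+d_{j+1})=-\cos\angle\widetilde{p}_{j-1}\widetilde{p}u-\cos\angle\widetilde{p}_{j+1}\widetilde{p}u$. As $\widetilde{p}$ minimizes this restriction, the derivative vanishes, giving $\cos\angle\widetilde{p}_{j-1}\widetilde{p}u=\cos\angle\widetilde{p}_{j+1}\widetilde{p}u'$ and hence $\angle\widetilde{p}_{j-1}\widetilde{p}u=\angle\widetilde{p}_{j+1}\widetilde{p}u'$, which is precisely planarity of type $(2)$. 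In both cases $\widetilde{p}$ and $\widetilde{p}_1,\widetilde{p}_2,\widetilde{p}_3$ lie in a common totally geodesic plane $P$, and this is what identifies $\widetilde{p}$ as the triangle Fermat point: for every $x\in P$ one has $\sum_j|x\widetilde{p}_j|\ge\sum_j d_j(x)=L(x)\ge L(\widetilde{p})=\sum_j|\widetilde{p}\widetilde{p}_j|$, using $\widetilde{p}_j\in\gamma_j$ for the first inequality and $|\widetilde{p}\widetilde{p}_j|=d_j(\widetilde{p})$ for the last equality. Thus $\widetilde{p}$ minimizes the planar length function $x\mapsto\sum_j|x\widetilde{p}_j|$ on $P$, i.e.\ it is the Fermat point of $\Delta\widetilde{p}_1\widetilde{p}_2\widetilde{p}_3$. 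Theorem~\ref{Fermat} now closes the argument: when $\widetilde{p}$ is off all the $\gamma_j$ it is not a vertex of the triangle, so the triangle is $2\pi/3$-acute and $\widetilde{p}$ is its balanced point; when $\widetilde{p}=\widetilde{p}_j$ it is a vertex, so the triangle is $2\pi/3$-obtuse with largest angle there, that is $\angle\widetilde{p}_{j-1}\widetilde{p}\widetilde{p}_{j+1}\ge 2\pi/3$.

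I expect the on-geodesic case to be the delicate point. One has to justify the one-variable first-variation computation along $\gamma_j$ (legitimate because $d_j$ vanishes identically on $\gamma_j$ while $d_{j-1},d_{j+1}$ are smooth at $\widetilde{p}$, even though $L$ itself fails to be differentiable transverse to $\gamma_j$), and one must ensure Theorem~\ref{Fermat} is applied to a genuine triangle, i.e.\ that $\widetilde{p},\widetilde{p}_{j-1},\widetilde{p}_{j+1}$ are not collinear; in the degenerate collinear sub-case the claimed angle is simply $\pi\ge 2\pi/3$, so the conclusion persists. The off-geodesic case, by contrast, is clean once one observes that the vanishing of $\nabla L$ is exactly the statement that three unit vectors sum to zero.
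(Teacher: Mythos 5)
Your proof is correct, but it takes a genuinely different route from the paper's. The paper treats everything except one step as immediate: the only thing it actually proves is that a Fermat point lying on some $\gamma_j$ must be planar, and it does so synthetically, via a lemma asserting that the unique planar point $v_j$ on $\gamma_j$ minimizes $d_{j-1}+d_{j+1}$ among all points of $\gamma_j$. That lemma is proved by an unfolding trick: an elliptic isometry with axis $\gamma_j$ rotates the projection $p_{j+1}$ into the plane spanned by $\gamma_j$ and $p_{j-1}$, on the opposite side of $\gamma_j$; since this rotation preserves distances to points of $\gamma_j$, a minimizer on $\gamma_j$ must lie on the geodesic segment joining $p_{j-1}$ to the rotated point, which forces the equal-angle (type (2) planarity) condition, and uniqueness of the planar point finishes. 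You replace this synthetic step with first-variation calculus: off the geodesics, $\nabla L=0$ forces the three unit gradients to sum to zero, hence to be coplanar with pairwise angles $2\pi/3$ (which yields balancedness directly, without even invoking Theorem \ref{Fermat}); on $\gamma_j$, the vanishing derivative of $d_{j-1}+d_{j+1}$ along $\gamma_j$ gives exactly type (2) planarity. You also make explicit, through the chain $\sum_j|x\widetilde{p}_j|\ge L(x)\ge L(\widetilde{p})=\sum_j|\widetilde{p}\widetilde{p}_j|$, the identification of $\widetilde{p}$ with the Fermat point of the projection triangle, which the paper leaves implicit. What the paper's route buys is self-containedness: it stays inside the convexity-plus-trigonometry toolkit used throughout, with no appeal to smoothness of distance-to-geodesic functions or the Riemannian first variation formula. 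What your route buys is uniformity and economy: one mechanism handles both cases, and the balanced conclusion falls out of the gradient identity for free.

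Two small points should be tightened in a final write-up. First, in the collinear sub-case on $\gamma_j$ you assert the angle is $\pi$; a priori it could also be $0$ (both perpendicular feet reached in the same direction from $\widetilde{p}$), and this must be excluded, e.g.\ by one more variation: moving from $\widetilde{p}$ in that common direction decreases $d_{j-1}+d_{j+1}$ at rate $2$ while $d_j$ grows at rate at most $1$, contradicting minimality. Second, in the off-geodesic case, for ``the plane determined by the projections'' to be well-defined (planarity of type (1)) you need $\widetilde{p}_1,\widetilde{p}_2,\widetilde{p}_3$ to be non-collinear; this is exactly the paper's general-position remark, since a degenerate projection triangle would produce a geodesic orthogonal to all three $\gamma_j$. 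Neither point affects the validity of your approach.
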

The least obvious part of this observation is that when the Fermat point $\widetilde{p}$ is on $\gamma_j$ for some $j$, it must be planar. Without loss of generality, we may assume that $j=1$. The fact that $\widetilde{p}$ must be planar is a consequence of the following lemma:
\begin{lem}
	The unique planar point $v_1$ on $\gamma_1$ realizes the minimum of the sum of the distances to $\gamma_2$ and $\gamma_3$ among all points on $\gamma_1$.
\end{lem}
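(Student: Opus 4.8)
The plan is to restrict the function $g(p):=d_2(p)+d_3(p)$ to the geodesic $\gamma_1$ and to show two things: that $g$ is convex along $\gamma_1$, and that $v_1$ is a critical point of $g$. Since a critical point of a convex function is automatically a global minimum, this identifies $v_1$ as the minimizer. Observe first that $g$ is smooth on all of $\gamma_1$, because $\gamma_1$ is disjoint from both $\gamma_2$ and $\gamma_3$, so neither $d_2$ nor $d_3$ vanishes or develops a corner along $\gamma_1$.

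For the convexity I would invoke the convexity lemma directly. Fix two points $q,q'$ on $\gamma_1$ and let $r,r'$ be their orthogonal projections to $\gamma_2$, so that $|qr|=d_2(q)$ and $|q'r'|=d_2(q')$. Applying the convexity lemma to the four points $q,q',r',r$ (which lie on the two geodesics $\gamma_1$ and $\gamma_2$), the point $p$ on $qq'$ with $|qp|/|qq'|=t$ and the corresponding point $p'$ on $r'r$ satisfy $|pp'|\le (1-t)d_2(q)+t\,d_2(q')$. Since $p'\in\gamma_2$ we have $d_2(p)\le|pp'|$, so $d_2$ is convex along $\gamma_1$; the identical argument applied to $\gamma_3$ shows $d_3$ is convex, whence $g$ is convex. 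Because $\gamma_1$ and $\gamma_2$ are disjoint the four points are never collinear, so in fact the inequality is strict and the minimizer is unique.

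Next I would compute the first variation of $g$. Parametrize $\gamma_1=[u,u']$ by arc length in the direction of $u'$, and let $p_2,p_3$ be the feet of the perpendiculars from the moving point to $\gamma_2,\gamma_3$. Because each perpendicular segment meets its target geodesic orthogonally, the first variation formula gives
\[
\frac{d}{ds}\,d_2=-\cos\phi_2,\qquad \frac{d}{ds}\,d_3=-\cos\phi_3,
\]
where $\phi_2=\angle p_2pu'$ and $\phi_3=\angle p_3pu'$ are the angles at the moving point between the forward direction along $\gamma_1$ and the directions toward $p_2$ and $p_3$. Hence $g'=-(\cos\phi_2+\cos\phi_3)$, and since $\phi_2,\phi_3\in(0,\pi)$, the equation $g'=0$ is equivalent to $\phi_2+\phi_3=\pi$.

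Finally I would match this with planarity. At $v_1$ the defining equation of a planar point of type $(2)$ reads $\angle p_3v_1u=\angle p_2v_1u'$; since $\angle p_3v_1u=\pi-\phi_3$ and $\angle p_2v_1u'=\phi_2$, this is exactly $\phi_2+\phi_3=\pi$, that is, $g'(v_1)=0$. Thus $v_1$ is the critical point of the convex function $g$ along $\gamma_1$ and therefore realizes its minimum, as claimed. I expect the first-variation step to be the main obstacle: one must set up the hyperbolic first variation of distance-to-a-geodesic with the correct sign and angle conventions and verify that the envelope term coming from the motion of the foot point along $\gamma_2$ drops out — which happens precisely because the connecting segment is orthogonal to $\gamma_2$ — so that the critical-point condition lines up exactly with the planarity equation.
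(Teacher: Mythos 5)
Your proof is correct, but it takes a genuinely different route from the paper's. The paper argues synthetically: after securing existence of a minimizer (both distances blow up as $p$ approaches the ends of $\gamma_1$), it takes any minimizer $p$ with projections $p_2,p_3$ and applies the elliptic rotation about $\gamma_1$ carrying $p_3$ to a point $p_3'$ lying in the plane of $\gamma_1$ and $p_2$, on the opposite side of $\gamma_1$; since a minimizer of $d_2+d_3$ on $\gamma_1$ also minimizes $q\mapsto|qp_2|+|qp_3|=|qp_2|+|qp_3'|$, the point $p$ must lie on the segment $p_2p_3'$, and the resulting vertical-angle equality $\angle p_2pu'=\angle p_3pu$ is exactly the type-$(2)$ planarity condition, so $p=v_1$ by uniqueness of the planar point. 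You instead prove strict convexity of $g=d_2+d_3$ along $\gamma_1$ via the convexity lemma (strictness because the four points in the lemma can never be collinear when $\gamma_1$ and $\gamma_2$ are disjoint), and then identify planarity with criticality through the first-variation formula $g'=-(\cos\phi_2+\cos\phi_3)$. Your approach buys something the paper's does not state: strict convexity gives uniqueness of the minimizer outright, and since a critical point of a convex function on a line is automatically a global minimum, you need no separate properness/existence step; it also makes conceptually transparent that ``planar'' is literally the Euler--Lagrange condition for $g$. The cost is the analytic input, namely the first variation of the distance to a geodesic with the foot-point term vanishing by orthogonality; you correctly flagged this as the delicate point, and it can in fact be justified within the paper's own toolkit by differentiating the cosine rule for a quadrilateral with two right angles, $\sinh d_2(p')=\sinh d_2(p)\cosh|pp'|-\cosh d_2(p)\sinh|pp'|\cos\phi_2$, at $|pp'|=0$. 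The paper's unfolding trick reaches the same angle equation with no calculus at all, in keeping with its purely synthetic arguments elsewhere.
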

\begin{proof}
	Let $\gamma_1=[u,u']$. The existence of a point realizing the minimum of the sum of distances to $\gamma_2$ and $\gamma_3$ is clear, since the distance from $p\in\gamma_1$ to $\gamma_2$ and that from $p$ to $\gamma_3$ approaches infinity when $p$ goes to $u$ or $u'$.
	
	Consider a point $p\in\gamma_1$ and its projections $p_2$ and $p_3$ to $\gamma_2$ and $\gamma_3$ respectively. We consider the one parameter subgroup of $\PSLtwoC$ consisting of all elliptic isometries with axis $\gamma_1$. There is an isometry in this subgroup such that the image $p_3'$ of $p_3$ under this isometry is contained in the plane determined by $\gamma_1$ and $p_2$, moreover $p_3'$ and $p_2$ are separated by $\gamma_1$. Notice that $|pp_3|=|pp_3'|$.	
	\begin{center}
		\includegraphics[scale=0.5]{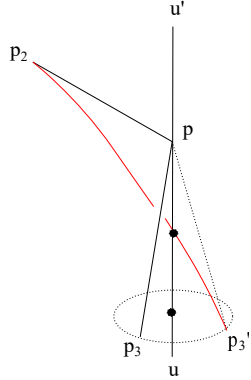}
		\captionof{figure}{The planar point realizes the minimum.}
	\end{center}
	If a point $p\in\gamma_1$ realizes the minimum of the sum of distance to $\gamma_2$ and $\gamma_3$ among all points in $\gamma_1$, then $p$ must realize the minimum of sum of distances to $p_2$ and $p_3$ among all points in $\gamma_1$, hence it realizes the minimum of the sum of the distance between $p_2$ and $p_3'$. Hence $p$ is contained in the geodesic segment $p_2p_3'$ and we have
	\begin{equation*}
		\angle p_2pu'=\angle p_3'pu=\angle p_3pu.
	\end{equation*}	
	Since $v_1$ is the only point with this property, we have $p=v_1$. 	
\end{proof}
Our next result is to show that the reciprocal of the observation is also true:
\begin{thm}\label{fpfortriple}
	Let $(\gamma_1, \gamma_2, \gamma_3)$ be a triple of geodesics in general position. Then,
	\begin{enumerate}
		\item if the triple admits a balanced point, then the balanced point is the Fermat point;
		\item if the triple is $2\pi/3$-obtuse, then the Fermat point of this triple is the vertex of the unique internal angle bigger or equal to $2\pi/3$.
	\end{enumerate}
\end{thm}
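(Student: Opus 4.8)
The plan is to exploit the strict convexity of $L$ that was already established. In the proof of the preceding existence-and-uniqueness theorem, the convexity lemma gives, for distinct $p,p'$ in general position, the strict inequality $L(p_m)<\tfrac12 L(p)+\tfrac12 L(p')$ at the midpoint $p_m$; that is, $L$ is strictly convex along every geodesic of $\HHH$. Consequently $L$ has a unique minimizer, the Fermat point, and \emph{any} point at which all one-sided directional derivatives of $L$ are nonnegative must be that minimizer. So in each case it suffices to produce such a point and to identify it with the one named in the statement. I record the two first-variation formulas I will use. Where $p\notin\gamma_j$, the function $d_j$ is smooth and $\nabla d_j(p)=u_j$, the unit tangent at $p$ to the perpendicular geodesic from $\gamma_j$ pointing away from $\gamma_j$ (equivalently, the tangent to $p_jp$ at $p$ pointing away from $p_j$). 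At a point of $\gamma_1$, writing a unit direction as $w=\cos\phi\,t+\sin\phi\,n$ with $t$ tangent to $\gamma_1$ and $n$ a unit normal, the one-sided derivative of $d_1$ is $|\sin\phi|$, the magnitude of the component of $w$ normal to $\gamma_1$.

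For part $(1)$, let $\widetilde p$ be a balanced point. It lies off all three geodesics, so $L$ is smooth at $\widetilde p$ and $\nabla L(\widetilde p)=u_1+u_2+u_3$. Planarity means $\widetilde p$ and its projections $\widetilde p_1,\widetilde p_2,\widetilde p_3$ span a totally geodesic plane $P$; each segment $\widetilde p\,\widetilde p_j$ joins two points of $P$ and hence lies in $P$, so the three unit vectors $u_j$ all lie in $T_{\widetilde p}P$. The balanced condition makes them pairwise at angle $2\pi/3$ in this $2$-plane, whence $u_1+u_2+u_3=0$. Thus $\widetilde p$ is a critical point of the convex function $L$, and by strict convexity it is the unique global minimum, i.e.\ the Fermat point. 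Alternatively one can reduce to the plane $P$: by Proposition~\ref{balancedp} the triangle $\Delta\widetilde p_1\widetilde p_2\widetilde p_3$ is $2\pi/3$-acute with balanced point $\widetilde p$, and Theorem~\ref{Fermat} together with Lemma~\ref{anglehyp2} identifies $\widetilde p$ as its Fermat point.

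For part $(2)$, suppose the internal angle at $\gamma_1$ is $\theta\ge 2\pi/3$; I claim the Fermat point is $v_1$. Only $d_1$ is nonsmooth at $v_1$. By the lemma above, $v_1$ minimizes $d_2+d_3$ among points of $\gamma_1$, and since this is an interior minimum of a smooth function on the bi-infinite geodesic $\gamma_1$, its first-order condition reads $\langle u_2+u_3,t\rangle=0$; thus $S:=u_2+u_3$ is normal to $\gamma_1$. For a unit direction $w=\cos\phi\,t+\sin\phi\,n$ the one-sided derivative of $L$ is
\[
D_wL=|\sin\phi|+\langle S,w\rangle=|\sin\phi|+\sin\phi\,\langle S,n\rangle\ge|\sin\phi|\,(1-|S|).
\]
Because $u_2$ and $u_3$ make angle $\theta$ (they are the reverses of the rays $v_1v_{12}$ and $v_1v_{13}$), we have $|S|^2=2+2\cos\theta$, so $\theta\ge 2\pi/3$ forces $|S|\le 1$ and hence $D_wL\ge 0$ for every $w$. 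Therefore $v_1$ is a local, hence the unique global, minimum of $L$. Strict convexity also gives the uniqueness of the obtuse vertex asserted in the statement: two internal angles $\ge 2\pi/3$ would yield two distinct local minima, which is impossible.

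The main obstacle is the non-smoothness of $L$ on the geodesics: the whole argument for part $(2)$ rests on the first-variation formula $D_wd_1=|\sin\phi|$ at a point of $\gamma_1$ and on correctly extracting from the planar-point lemma that the tangential part of $u_2+u_3$ vanishes. Once these one-sided derivatives are in hand, convexity does the rest with no further hyperbolic trigonometry; the computation $|u_2+u_3|^2=2+2\cos\theta$ is exactly what converts the geometric obtuseness condition $\theta\ge 2\pi/3$ into the analytic condition $|S|\le 1$ needed for nonnegativity of all directional derivatives.
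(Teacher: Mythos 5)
Your proof is correct, and it takes a genuinely different route from the paper's. The paper argues synthetically: in each case it passes to the plane $P$ spanned by the projections, replaces $L$ by the smaller comparison function $L'$ of distances to auxiliary planes $P_1,P_2,P_3$ orthogonal to the legs, uses a reflection through $P$ to reduce to points of $P$, and then proves the required two-dimensional inequalities by explicit hyperbolic trigonometry (cosine rules for triangles and for quadrilaterals with two right angles, with the same interpolation parameter $t$ as in Theorem \ref{Fermat}); Fact (2) is finished by a zig-zag descent argument terminating on $\gamma_1'$. You instead run a first-variation argument: strict convexity of $L$ along geodesics (which does follow from the equality discussion in the paper's uniqueness proof combined with the general-position hypothesis, exactly as you use it), the gradient formula $\nabla d_j=u_j$ off $\gamma_j$, and the one-sided derivative $|\sin\phi|$ of $d_1$ at points of $\gamma_1$. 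Your two computations are sound: at a balanced point, planarity puts $u_1,u_2,u_3$ in the two-plane $T_{\widetilde p}P$ at mutual angles $2\pi/3$, forcing $u_1+u_2+u_3=0$; at $v_1$ the tangential part of $S=u_2+u_3$ vanishes (your appeal to the lemma is legitimate, and it also follows directly from the defining symmetry of a type (2) planar point), while $|S|^2=2+2\cos\theta\le 1$ translates $\theta\ge 2\pi/3$ into nonnegativity of every one-sided derivative of $L$ at $v_1$, after which convexity finishes. What your approach buys is brevity, a uniform treatment of the degenerate angle $\theta=\pi$ (which the paper handles separately), and the uniqueness of the obtuse vertex for free; what it costs is reliance on standard Riemannian facts (first variation of distance, one-sided derivatives of convex functions) that the paper never develops, whereas the paper's explicit inequalities---notably (\ref{1})---are not merely proof-internal but are reused later, in the proof of Fact (2) and in the analysis of Steiner trees. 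One caveat: your parenthetical ``alternative'' for part (1), citing Proposition \ref{balancedp} and Theorem \ref{Fermat} for the triangle $\Delta\widetilde p_1\widetilde p_2\widetilde p_3$, is not by itself a proof, since knowing that $\widetilde p$ minimizes the sum of distances to the three \emph{points} $\widetilde p_j$ does not bound $L(q)$ for other $q$ (the nearest points on the $\gamma_j$ move with $q$); closing precisely that gap is what the paper's planes-and-reflection construction accomplishes. As your main gradient argument is self-contained, this aside does no harm.
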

We first give two immediate corollaries that we can deduce from the above theorem:
\begin{cor}
	A triple admits a balanced point if and only if it is $2\pi/3$-acute, and the balanced point, if it exists, is unique.
\end{cor}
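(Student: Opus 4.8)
The plan is to obtain this corollary as a direct consequence of Theorem \ref{fpfortriple}, together with the existence and uniqueness of the Fermat point proved above and the preceding Observation; no new geometric estimates are needed. The statement packages three assertions: (a) if a balanced point exists then the triple is $2\pi/3$-acute; (b) if the triple is $2\pi/3$-acute then a balanced point exists; and (c) the balanced point is unique when it exists. I would treat each in turn, using throughout the dichotomy that every triple in general position is either $2\pi/3$-acute (all internal angles $<2\pi/3$) or $2\pi/3$-obtuse (some internal angle $\ge 2\pi/3$), these two possibilities being mutually exclusive and exhaustive.

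For (a), suppose the triple has a balanced point $\widetilde{p}$. By Theorem \ref{fpfortriple}(1), $\widetilde{p}$ is the Fermat point of the triple. By definition a balanced point is not contained in any of the three geodesics. If the triple were $2\pi/3$-obtuse, then Theorem \ref{fpfortriple}(2) would force the Fermat point to lie on the geodesic carrying the internal angle $\ge 2\pi/3$, contradicting that $\widetilde{p}$ lies off all three geodesics. Hence the triple is $2\pi/3$-acute. Assertion (c) is then immediate: any balanced point is the Fermat point by Theorem \ref{fpfortriple}(1), and the Fermat point is unique by the theorem established above, so there is at most one balanced point.

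For (b), assume the triple is $2\pi/3$-acute and let $\widetilde{p}$ be its Fermat point, which exists and is unique. By the Observation, $\widetilde{p}$ is planar, and either it lies off all three geodesics---in which case it is balanced and we are done---or it lies on some $\gamma_j$. In the latter case, being a planar point on $\gamma_j$, it must coincide with the unique planar point $v_j$ on $\gamma_j$, so its projections are exactly $v_{jj-1}$ and $v_{jj+1}$ and the angle $\angle \widetilde{p}_{j-1}\widetilde{p}\widetilde{p}_{j+1}$ is precisely the internal angle of $\gamma_j$. The Observation then gives that this internal angle is $\ge 2\pi/3$, contradicting $2\pi/3$-acuteness. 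Therefore $\widetilde{p}$ lies off all three geodesics and is balanced.

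The argument is short, and the only point requiring a little care is the bookkeeping in step (b): identifying an on-geodesic Fermat point with the unique planar point $v_j$, so that the ad hoc angle condition in the Observation can be read as the internal angle appearing in the definitions of $2\pi/3$-acute and $2\pi/3$-obtuse. Once that identification is in place, both the equivalence and the uniqueness fall out of Theorem \ref{fpfortriple} and the uniqueness of the Fermat point.
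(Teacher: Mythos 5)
Your proof is correct and follows the paper's intended route: the paper states this corollary as an immediate consequence of Theorem \ref{fpfortriple}, with the supporting facts (the Observation, the uniqueness of the planar point $v_j$ on each $\gamma_j$, and the existence and uniqueness of the Fermat point) all established just beforehand, and your write-up simply makes that deduction explicit. In particular, your identification of an on-geodesic Fermat point with the unique planar point $v_j$, so that the angle condition in the Observation becomes a statement about the internal angle, is precisely the bookkeeping the paper leaves implicit.
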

\begin{cor}
	If $(\gamma_1,\gamma_2,\gamma_3)$ is $2\pi/3$-obtuse, then there is a unique internal angle which is bigger or equal to $2\pi/3$. Hence, for any triple, the sum of interior angles is bounded from above by $7\pi/3$.
\end{cor}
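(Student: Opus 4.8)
The plan is to deduce the statement from Theorem~\ref{fpfortriple} together with the uniqueness of the Fermat point established above. The argument splits into two parts: first the uniqueness of the internal angle that is $\ge 2\pi/3$ in the obtuse case, and then the upper bound $7\pi/3$ on the sum of the three internal angles. The uniqueness is the substantial part; once it is in hand, the angle bound is a short numerical consequence, since an internal angle that is not the obtuse one is $<2\pi/3$ while every internal angle is at most $\pi$. Note that the angle bound by itself does not force uniqueness (two angles $\ge 2\pi/3$ would only sum to $\ge 4\pi/3$, which is compatible with $7\pi/3$), so the two parts must be carried out in this order.

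For the uniqueness I would argue by contradiction. Suppose two of the geodesics, say $\gamma_1$ and $\gamma_2$, both have internal angle $\ge 2\pi/3$. The key step is to show that whenever the internal angle for $\gamma_j$ is $\ge 2\pi/3$, its planar point $v_j$ is the Fermat point of the triple. Granting this, both $v_1$ and $v_2$ would be Fermat points; but $v_1\in\gamma_1$ and $v_2\in\gamma_2$ lie on distinct, disjoint geodesics, so $v_1\ne v_2$, contradicting the uniqueness of the Fermat point. Hence at most one internal angle is $\ge 2\pi/3$, and since being $2\pi/3$-obtuse means at least one such angle exists, there is exactly one.

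The heart of the matter, and the step I expect to be the main obstacle, is the claim that a planar point $v_j$ with internal angle $\ge 2\pi/3$ realizes the minimum of $L$. The forward direction of Theorem~\ref{fpfortriple}(2) only places the Fermat point at \emph{some} vertex with internal angle $\ge 2\pi/3$; what I additionally need is that \emph{every} such vertex is the minimizer, and proving this without presupposing the uniqueness I am after is exactly the delicate point. Since $L$ is convex along geodesics by the convexity lemma, it suffices to verify that $v_j$ is a local minimum of $L$, and crucially this local statement depends only on the internal angle at $\gamma_j$ and not on the other two angles (which is what keeps the argument non-circular). To establish local minimality I would localize near $v_j$ and reduce to the planar situation: the variation of $L$ in directions transverse to $\gamma_j$ is controlled, via the convexity lemma, by the sum-of-distances function of the planar triangle $\Delta v_j v_{jj-1}v_{jj+1}$, whose angle at $v_j$ is $\ge 2\pi/3$, so that Case~I of Theorem~\ref{Fermat} applies and forces $v_j$ to be the minimizing vertex. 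Convexity of $L$ then upgrades this local minimum to the global one, identifying $v_j$ with the Fermat point.

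Finally, for the angle bound I would distinguish the two cases. If the triple is $2\pi/3$-acute, all three internal angles are $<2\pi/3$, so their sum is $<2\pi<7\pi/3$. If it is $2\pi/3$-obtuse, the uniqueness just proved shows that exactly one internal angle lies in $[2\pi/3,\pi]$ while the other two lie in $(0,2\pi/3)$; hence the sum is strictly less than $\pi+2\pi/3+2\pi/3=7\pi/3$. In either case the sum of the interior angles is bounded above by $7\pi/3$, as claimed.
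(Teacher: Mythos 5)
Your overall skeleton is the right one, and it is in fact the paper's intended deduction: show that \emph{every} planar point whose internal angle is at least $2\pi/3$ is the Fermat point, then use uniqueness of the Fermat point (two such points would lie on distinct, pairwise disjoint geodesics, hence be distinct) to rule out a second obtuse internal angle, and finish with the elementary angle count. Your worry that Theorem~\ref{fpfortriple}(2) presupposes the uniqueness you are after is legitimate as far as the \emph{statement} of that theorem goes, but the resolution is much simpler than re-proving the key step: the paper's proof of Fact~(2) of Theorem~\ref{fpfortriple} assumes only that the internal angle at some planar point, say $v_1$, is at least $2\pi/3$, and shows that this $v_1$ is the Fermat point; uniqueness of the obtuse angle is nowhere used. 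So the statement you isolate as ``the heart of the matter'' is exactly what that proof already establishes, and your contradiction argument together with the bound $\pi+2\pi/3+2\pi/3=7\pi/3$ completes the corollary.

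The genuine gap is in your self-contained argument for the key step, and it is twofold. First, your comparison runs in the wrong direction: writing $F(p)=|pv_j|+|pv_{jj-1}|+|pv_{jj+1}|$ for the vertex-distance function of the triangle $\Delta v_jv_{jj-1}v_{jj+1}$, one has $d_j(p)\le|pv_j|$ and $d_{j\pm1}(p)\le|pv_{jj\pm1}|$ because $v_j\in\gamma_j$ and $v_{jj\pm1}\in\gamma_{j\pm1}$; hence $L(p)\le F(p)$ with equality at $p=v_j$, and the conclusion of Theorem~\ref{Fermat}(2), namely $F(p)\ge F(v_j)$, yields no lower bound on $L(p)$ whatsoever. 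A workable comparison needs a function lying \emph{below} $L$ that still equals $L(v_j)$ at $v_j$; this is precisely why the paper's proof replaces the three geodesics by totally geodesic planes containing them (the planes orthogonal to the segments $v_jv_{jj\pm1}$ and to the bisecting geodesic at $v_j$) and then reflects across the plane of the triangle to reduce to a two-dimensional problem about three lines, not three points. Second, your argument never invokes planarity of $v_j$ (the equal-angle condition defining it), and no argument omitting it can succeed: for a non-planar point $p\in\gamma_j$ whose two projection segments still make an angle $\ge 2\pi/3$, your sketch applies word for word, yet such a $p$ is not even a local minimum of $L$, since moving along $\gamma_j=[u,u']$ keeps $d_j=0$ while the first variation of $d_{j-1}+d_{j+1}$ in the direction of the end $u'$ equals $-\left(\cos\angle p_{j-1}pu'+\cos\angle p_{j+1}pu'\right)$, which vanishes exactly when the planar condition holds. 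Relatedly, verifying first-order nonnegativity in directions transverse to $\gamma_j$ and along $\gamma_j$ separately would not suffice even if both checks were correct, because directional derivatives of a convex function are only sublinear in the direction; the inequality must be checked in all directions at once, and that is where both planarity and the angle hypothesis genuinely enter. The one solid piece of your local argument is the final upgrade: a local minimum of $L$ is automatically global, since each $d_i$ is convex along geodesics by the convexity lemma.
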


The remainder of this subsection is occupied with the proof of the above theorem. We first give the proof for Fact (1) in the theorem: 
\begin{proof}[Proof of Fact (1) in Theorem \ref{fpfortriple}]
	Let $\widetilde{p}$ be a balanced point for $(\gamma_1,\gamma_2,\gamma_3)$. Let $\widetilde{p}_{j}$ denote the orthogonal projection of $\widetilde{p}$ to $\gamma_j$ for $j=1,2,3$. Let $P$ denote the plane containing the triangle $\Delta \widetilde{p}_{1}\widetilde{p}_{2}\widetilde{p}_{3}$. Let $P_j$ denote the plane orthogonal to $\widetilde{p}\widetilde{p}_{j}$ for $j=1,2,3$. Therefore, the geodesic $\gamma_j$ will be contained in $P_j$. We define the function $L':\HHH\rightarrow\RR_{\ge0}$ sending each point to the sum of its distances to the three planes $P_1$, $P_2$ and $P_3$. 
	
	Let $p$ be any point in $\HHH$. By definition, we have $L(p)\ge L'(p)$. Let $p'$ denote the image of $p$ under the reflection of $\HHH$ with respect to $P$. Then we have $L'(p)=L'(p')$. Let $p_m$ denote the mid point of $pp'$. Then $p_m$ is in $P$. Moreover, we have $L'(p_m)\le L'(p)$, where the equality holds if and only if $p$ is in $P$. In that case, the three points $p$, $p'$ and $p_m$ coincide.
	
	\begin{center}
		\includegraphics[scale=1.0]{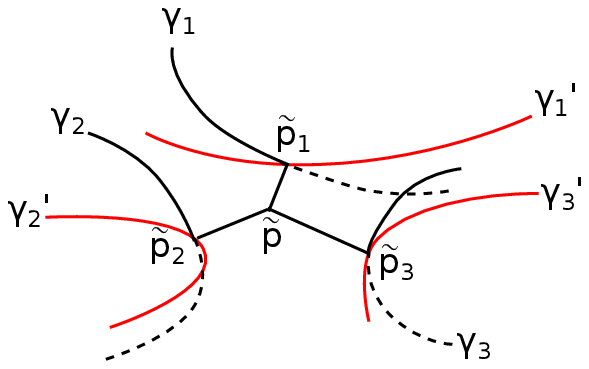}
		\includegraphics[scale=1.0]{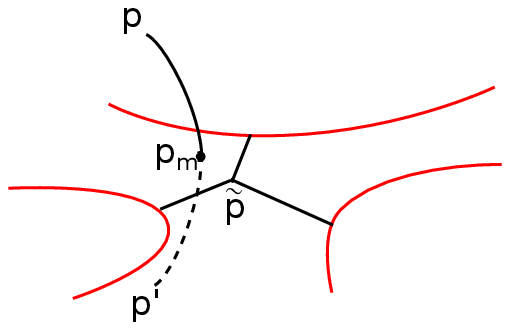}
		\captionof{figure}{The three red geodesics and $\widetilde{p}$ are coplane.}
	\end{center}
		
	Since $L'(\widetilde{p})=L(\widetilde{p})$, to show that $L(\widetilde{p})<L(p)$ for any $p\in\HHH$ different from $\widetilde{p}$, it is enough to prove the following fact:
	\begin{equation}\label{necessary}
		L'(\widetilde{p})< L'(p_m).
	\end{equation} 
	
	Let $\gamma_1'$, $\gamma_2'$ and $\gamma_3'$ be the intersection geodesics of $P$ with $P_1$, $P_2$ and $P_3$ respectively. Notice that they may have intersections points with each other. By their construction, the point $\widetilde{p}$ is not contained in any of these three geodesics. By the convexity lemma, $L'(p_m)$ equals to the sum of distances from $p_m$ to $\gamma_1'$, $\gamma_2'$ and $\gamma_3'$ respectively.

	By the above discussion, to prove  Fact (1), it is enough to prove that a balanced point $\widetilde{p}$ realizes the minimum of the sum of distances to $\gamma_1'$, $\gamma_2'$ and $\gamma_3'$ among all points in $P$.
	
	The complement of the three geodesics $\gamma_1'$, $\gamma_2'$ and $\gamma_3'$ in $P$ has several connected open components. We consider the one containing the balanced point $\widetilde{p}$ and denote it by $D$. Depending on the number of intersection points, the shape of $D$ has one of the following forms:
	
	\begin{center}
		\includegraphics[scale=0.7]{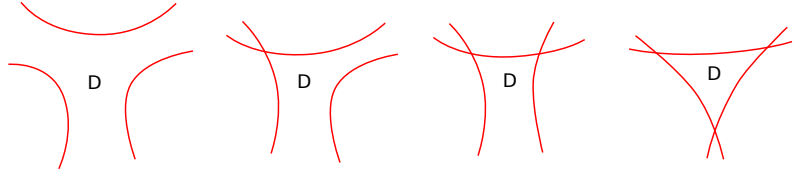}
		\captionof{figure}{Possible shapes of $D$}
	\end{center}
	
	By using a similar argument to the proof of Lemma \ref{out}, we can show that for any point $p\in P\setminus\overline{D}$, there is a point $q\in\overline{D}$, such that $L'(q)<L'(p)$. Therefore, from now on, we only need to consider points in $\overline{D}$.
	
	We consider the geodesic segments $\widetilde{p}\widetilde{p}_{1}$, $\widetilde{p}\widetilde{p}_{2}$ and $\widetilde{p}\widetilde{p}_{3}$. Their union separates $\overline{D}$ into three parts. Below we will prove that $L'(p)\ge L(\widetilde{p})$ for any $p$ contained in the part whose boundary contains $\widetilde{p}\widetilde{p}_{2}$ and $\widetilde{p}\widetilde{p}_{3}$ where equality holds if and only if $p=\widetilde{p}$. The proof for points in other two parts will be the same.
	
	\medskip
	
	Without loss of generality, we assume that $pp_1$ intersects $\widetilde{p}\widetilde{p}_{3}$ at $q$. Let $q_2$ denote the orthogonal projection of $q$ to $\gamma_2'$. 
	
	\begin{center}
		\includegraphics[scale=1.0]{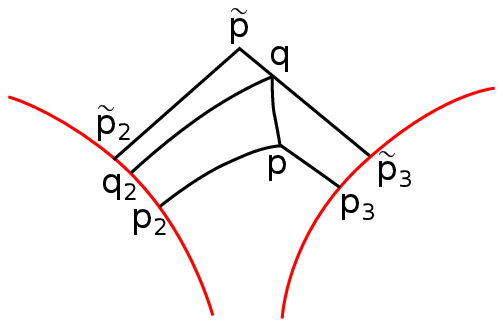}
		\captionof{figure}{}
	\end{center}

	Since $\angle\widetilde{p}_2\widetilde{p}\widetilde{p}_3=2\pi/3$, we have $\angle\widetilde{p}_3qq_2\ge2\pi/3$ where equality holds if and only if $q=\widetilde{p}$. We first prove the following inequality:
	\begin{equation}\label{1}
		|q\widetilde{p}_3|+|qq_2|\le |pp_2|+|pp_3|+|pq|,
	\end{equation}
	where equality holds if and only if $p=q$.
	
	If $p\neq q$, the geodesic segment $pq$ separates $\angle\widetilde{p}_3qq_2$ into two angles $\angle\widetilde{p}_3qp$ and $\angle pqq_2$. We denote them by $\alpha$ and $\beta$ respectively. Since $q\in\widetilde{p}\widetilde{p}_3$, we have $\alpha<\pi/3$. If $\beta\ge\pi/2$, then we have
	\begin{eqnarray*}
		|qq_2|&<&|pp_2|,\\
		|q\widetilde{p}_3|&\le&|pp_3|+|pq|.
	\end{eqnarray*}
    Hence,  the inequality in (\ref{1}) holds in this case.
    
    Now we assume that $\beta<\pi/2$. By using the cosine rule for a quadrilateral with two right angles, we have the following relations:
    \begin{eqnarray*}
    	\sinh|pp_3|&=&\sinh|q\widetilde{p}_3|\cosh|pq|-\cosh|q\widetilde{p}_3|\sinh|pq|\cos\alpha,\\
    	\sinh|pp_2|&=&\sinh|qq_2|\cosh|pq|-\cosh|qq_2|\sinh|pq|\cos\beta.    	
    \end{eqnarray*}
    We use the same technique as in the triangle case. We use $a$, $b$, $a'$, $b'$ and $c$ to denote $|q\widetilde{p}_3|$, $|qq_2|$, $|pp_3|$, $|pp_2|$ and $|pq|$ respectively. Let $t\in[0,1]$. We rewrite the above relations as follows:
    \begin{eqnarray*}
    	\sinh a'&=&\sinh a\cosh (tc+(1-t)c)-\cosh a\sinh(tc+(1-t)c)\cos\alpha,\\
    	\sinh b'&=&\sinh b\cosh(tc+(1-t)c)-\cosh b\sinh(tc+(1-t)c)\cos\beta.
    \end{eqnarray*}
    Then we have:
    \begin{eqnarray*}
    	\sinh a'&=&\sinh(a-tc)\cosh ((1-t)c)+\sinh(tc)\cosh(a+(1-t)c)(1-\cos\alpha)-\\
    	&&-\sinh((1-t)c)\cosh(a-tc)\cos\alpha,\\
    	\sinh b'&=&\sinh(b-(1-t)c)\cosh (tc)+\sinh((1-t)c)\cosh(b+tc)(1-\cos\beta)-\\
    	&&-\sinh(tc)\cosh(b-(1-t)c)\cos\beta.\\    	
    \end{eqnarray*}
	Our aim is to find $t\in[0,1]$, such that the two inequalities
	\begin{eqnarray*}
		\sinh a'&\ge&\sinh(a-tc)\cosh ((1-t)c),\\
		\sinh b'&\ge&\sinh(b-(1-t)c)\cosh (tc),
	\end{eqnarray*}
	hold. This would imply the inequality in (\ref{1})
		
	First, we notice that if $a+b\le c$, then we have the inequality in (\ref{1}) directly.
	
	Let us assume that $a+b>c$. If $a<c$ or $b<c$, we let $t=1$ or $t=0$ respectively and get the above two inequalities. Therefore we also have the inequality in (\ref{1}) in this case.
	
	Assume that $a>c$ and $b>c$. It is enough to find $t\in(0,1)$ such that the following inequalities hold:
	\begin{equation*}
		0<\frac{\cos\alpha}{1-\cos\alpha}\le\frac{\sinh(tc)}{\sinh((1-t)c)}\le\frac{1-\cos\beta}{\cos\beta}<\infty.
	\end{equation*}
	The same argument as in the proof of the $2\pi/3$-obtuse case in Theorem \ref{Fermat} implies the existence of such $t$. Therefore, the inequality in (\ref{1}) holds in this case. As a consequence, we have
	\begin{equation*}
		L'(p)\ge L'(q),
	\end{equation*}
	where the equality holds if and only if $p=q$.
	
	The remaining part of the proof is to compare $L'(q)$ with $L(\widetilde{p})$. The idea is similar to the proof of Lemma \ref{anglehyp2}. Let $q_1$ be the orthogonal projection of $q$ to $\gamma_1'$. Assume that $q\neq\widetilde{p}$. By the cosine rule for a quadrilateral with two right angles, we have the following relations:
	\begin{eqnarray*}
		\sinh|qq_2|&=&\sinh|\widetilde{p}\widetilde{p}_2|\cosh|\widetilde{p}q|+\frac{1}{2}\cosh|\widetilde{p}\widetilde{p}_2|\sinh|\widetilde{p}q|,\\
		\sinh|qq_1|&=&\sinh|\widetilde{p}\widetilde{p}_1|\cosh|\widetilde{p}q|+\frac{1}{2}\cosh|\widetilde{p}\widetilde{p}_1|\sinh|\widetilde{p}q|.
	\end{eqnarray*}
	They imply the following two equalities:
	\begin{eqnarray*}
		\sinh|qq_2|&=&\sinh(|\widetilde{p}\widetilde{p}_2|+\frac{|\widetilde{p}q|}{2})\cosh\frac{|\widetilde{p}q|}{2}+\sinh|\widetilde{p}\widetilde{p}_2|\sinh^2\frac{|\widetilde{p}q|}{2},\\
		\sinh|qq_1|&=&\sinh(|\widetilde{p}\widetilde{p}_1|+\frac{|\widetilde{p}q|}{2})\cosh\frac{|\widetilde{p}q|}{2}+\sinh|\widetilde{p}\widetilde{p}_1|\sinh^2\frac{|\widetilde{p}q|}{2},
	\end{eqnarray*}
	from which we conclude:
	\begin{eqnarray*}
		|qq_2|&>& |\widetilde{p}\widetilde{p}_2|+\frac{|\widetilde{p}q|}{2},\\
		|qq_1|&>& |\widetilde{p}\widetilde{p}_1|+\frac{|\widetilde{p}q|}{2}.
	\end{eqnarray*}
    
    Hence, we have $L'(q)>L(\widetilde{p})$ which moreover implies that $L(p)>L(\widetilde{p})$. Therefore the balanced point is the Fermat point for the triple $(\gamma_1,\gamma_2,\gamma_3)$.
\end{proof}

\medskip

To prove  Fact (2) in the theorem, we use similar ideas. The technical part is how to use the inequality (\ref{1}) that we proved above.
\begin{proof}[Proof of Fact (2) in Theorem \ref{fpfortriple}]
	Without loss of  generality, we assume that the internal angle at $v_1\in\gamma_1$ is greater or equal to $2\pi/3$, where $v_1$ is the unique planar point on $\gamma_1$. Let $v_{12}$ and $v_{13}$ denote its orthogonal projection to $\gamma_2$ and $\gamma_3$.
	
	If the internal angle at $v_1$ is $\pi$, then we are done. In that case, the three points $v_{12}$, $v_1$ and $v_{13}$ are collinear and the geodesic segment $v_{12}v_{13}$ is orthogonal to both $\gamma_2$ and $\gamma_3$. By the convexity lemma, the segment $v_{12}v_{13}$ realizes the distance between $\gamma_2$ and $\gamma_3$. Given any point $p\in\HHH$, when we connect it to its orthogonal projections to $\gamma_1$, $\gamma_2$ and $\gamma_3$ with geodesic segments, we always get a connected graph containing a subgraph connecting $\gamma_2$ to $\gamma_3$. Therefore the value $L(p)$ is always greater or equal to $L(v_1)$, and the equality is realized only when $p=v_1$. Hence $v_1$ is the Fermat point.
	
	From now on, we assume that the internal angle at $v_1$ is in $[2\pi/3,\pi)$. The first step is similar to the previous case. We would like to simplify the problem to the planar case. We consider the unique plane $P$ containing $v_1$, $v_{12}$ and $v_{13}$. Since $v_1$ is planar, there is a unique geodesic $\gamma(v_1)$ orthogonal to $\gamma_1$ with respect to which the geodesic containing $v_1v_{12}$ and that containing $v_1v_{13}$ are symmetric. It is contained in $P$ and separates the angle $\angle v_{12}v_1v_{13}$ equally. 
	
	We denote by $P_1$ the plane orthogonal to $\gamma(v_1)$ at $v_1$, by $P_2$ (resp. $P_3$) the plane orthogonal to $v_1v_{12}$ (resp. $v_1v_{13}$) at $v_2$ (resp. $v_3$). We denote by $\gamma_1'$, $\gamma_2'$ and $\gamma_3'$ the intersection of $P$ with $P_1$, $P_2$ and $P_3$ respectively. 
	
	\begin{center}
		\includegraphics[scale=1.0]{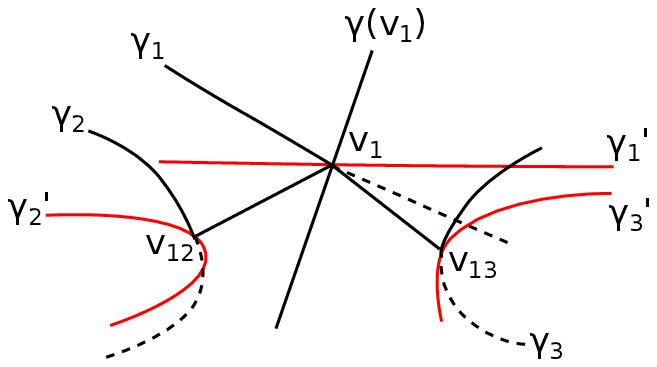}
		\captionof{figure}{}
	\end{center}
	
	Using a similar argument as in the previous case, to see $v_1$ is the Fermat point, it is enough to show that $v_1$ realizes the minimum of the sum of distances to $\gamma_1'$, $\gamma_2'$ and $\gamma_3'$ among all points in $P$. In particular, it is enough to consider the connected component of the complement of $\gamma_1'\cup\gamma_2'\cup\gamma_3'$ in $P$ containing $v_1v_{12}$ and $v_1v_{13}$. We denote the closure of this connected component by $D$. As in the previous case, we use $L'$ to denote the function sending each point $p$ to the sum of the distance from $p$ to $\gamma_1'$, $\gamma_2'$ and $\gamma_3'$.
	
	Given a point $p\in D$, we denote by $p_1$, $p_2$ and $p_3$ its orthogonal projections to $\gamma_1'$, $\gamma_2'$ and $\gamma_3'$. We consider the path of points $p$ such that $\angle p_{j-1}pp_{j+1}=2\pi/3$ and denote the path by $C_j$. Since $\angle v_{12}v_1v_{13}\ge 2\pi/3$, the intersection between $C_1$ and $D$ has two connected components. One intersects $\gamma_1'$ and $\gamma_2'$, the other intersects $\gamma_1'$ and $\gamma_3'$. We denote them by $C_{12}$ and $C_{13}$ respectively. The paths $C_2$ and $C_3$ are contained in $D$, such that $C_2$ intersects $\gamma_1'$ and $\gamma_3'$, while $C_3$ intersects $\gamma_1'$ and $\gamma_2'$. The subset $D_0$ of $D$ bounded by $C_3$, $C_{12}$, $\gamma_1'$ and $\gamma_2'$ is convex, as is the subset $D_0'$ bounded by $C_2$, $C_{13}$, $\gamma_1'$ and $\gamma_3'$. Moreover, the sets $D_0$ and $D_0'$ are disjoint, and the segment $pp_1$ for each point $p\in D$ intersects at most one of them. In particular, if $\angle v_{12}v_1v_{13}=2\pi/3$, the paths $C_{12}$, $C_{13}$, $C_2$ and $C_3$ meet at $v_1$.
	
	\begin{center}
		\includegraphics[scale=1.1]{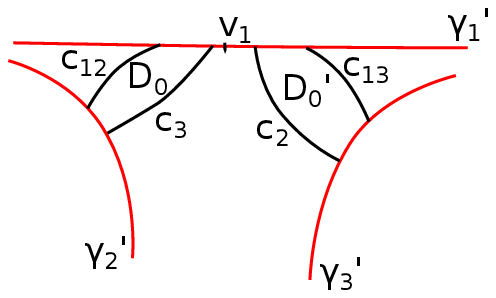}
			\captionof{figure}{}
	\end{center}
	
	We now claim that given a point $p$ in $D$, we can find a zig-zag path ending at $\gamma_1'$ starting from $p$ such that the $L'$-value is strictly decreasing along this path. More precisely, if $pp_j$ intersects with $D_0$, we denote by $q_1$ the point when $pp_j$ leaves $D_0$. By  inequality (\ref{1}), we have $L'(q_1)\le L'(p)$, where the equality holds if and only if $p=q_1$. Since $q$ is on the boundary of $D_0$, we can repeat the above process for $q_1$ and find another point $q_2$ on the boundary of $D_0$ such that $L'(q_2)<L(q_1)$. We keep repeating. If $\angle v_{12}v_1v_{13}>2\pi/3$, then the zig-zag path will stop on a point $q\in\gamma_1'$ within finite steps. Otherwise, we will have a zigzag path with infinite number of segments converging to $v_1$. 
	
	\begin{center}
		\includegraphics[scale=1.1]{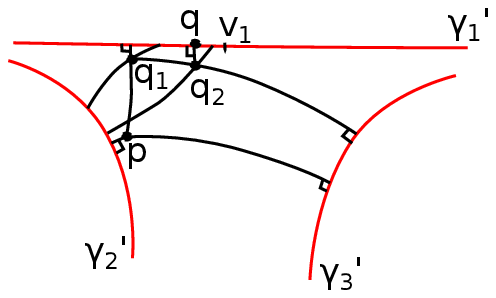}
			\captionof{figure}{}
	\end{center}
		
	As a conclusion, for any point $p\in D$ with $pp_j$ intersecting $D_0$ for some $j$, there is a point $q$ on $\gamma_1'$ whose $L'$-value is strictly smaller than $p$. The same argument works for $p$ with $pp_j$ intersecting $D_0'$ for some $j$. Notice that for any point $p$ in $D$, there is always some $j$ such that $pp_j$ intersecting $D_0$ or $D_0'$. Hence, we have the claim at the beginning of this paragraph.
	
	Since $v_1$ realizes the minimum of the $L'$-value among all points on $\gamma_1'$, Fact (2) holds.
	
\end{proof}

\subsection{The Steiner tree for triples of geodesics in \texorpdfstring{$\HHH$}{Lg}}
Given a triple $(\gamma_1, \gamma_2,\gamma_3)$, we consider the corresponding triple of involutions $(r_1,r_2,r_3)$. In this part, we assume that $G$ is a finite graph connecting the three geodesics in the triple, whose edges are all geodesics segments. Hence the intersection between an edge of $G$ and $\gamma_j$ for any $j$ is either a point or the entire edge. For our convenience, if the intersection is a point, we add this point to the set of vertices of $G$. If edges in $G$ intersect, the intersection points are also added in the vertex set.

We are interested in the length of the graph $D$ which arises from doubling $G$ using $r_j$ for some $j$, and then taking the quotient by the action of the group  $\langle r_1r_2, r_2r_3 \rangle$. Note that $D$ is a graph of rank at least two. The length of $D$ will be the length of a carrier graph of the associated representation. Moreover, generically the length of $D$ is twice of the Steiner length of $G$ which is defined as follows:

\begin{defn}
	The {\textit{Steiner length}} $\mathsf{L}(G)$ of a graph $G$ is defined by:
	\begin{equation}
	\mathsf{L}(G)=L(G_1)+\frac{1}{2}L(G_2),
	\end{equation}
	where $G_2$ is the intersection $G\cap (\gamma_1\cup \gamma_2\cup \gamma_3)$ and $G_1$ is the complement of $G_2$ in $G$.
\end{defn}

\begin{rmk}
	It is possible that $G$ has two distinct edges $e$ and $e'$ collinear, adjacent to a same vertex $v$ lying in $\gamma_j$ for some $j$, while $e$ and $e'$ are both orthogonal to $\gamma_j$. In that case, the intersection $G\cap r_j(G)$ contains $(e\cup e')\cap r_j(e\cup e')$ which is not contained in $\gamma_j$. In this case, the double of the Steiner length of $G$ is strictly bigger than the usual length of $D$, since the Steiner length for $(e\cup e')\cap r_j(e\cup e')$ equals to the usual length and is counted twice when doubling. 
	
	However, this is not an issue for our main purpose, since we are interested in the length of the double of the graph. We can always replace $G$ by a different graph $G'$ without changing the double graph $D$, such that the above situation does not happen to $G'$. 
	
	More precisely, since $G$ is connected, without loss of generality, we may assume that $e$ can be connected to $\gamma_{j-1}$ by a subgraph $F_1$, without passing $e'$. Now there are two possible cases. The first case is that $e$ can also be connected to $\gamma_{j+1}$ without passing through $e'$ by a subgraph $F_2$. In this case, we can cut out $F_1\cup F_2$ from $G$ and denote by $F'$ the connected component containing $e'$. Then we denote by $G'$ the union $(G\setminus F')\cup r_j(F')$. Notice that $G'$ is connected and finite. It connects $\gamma_j$'s. Moreover, its double is still $D$, but the double of its Steiner length now equals the usual length of $D$.
	
	The second case is that there is no such a subgraph $F_2$ described above. Then by cutting $G$ at $v$, we have two connected component. We denote by $F_1'$ the one containing $F_1$. We consider $G''$ the union $F_1'\cup r_j(G\setminus F_1')$. Notice that $G''$ is connected, finite. It connects $\gamma_{j-1}$, $\gamma_j$ and $r_j(\gamma_{j+1})$. Notice that its double is also $D$ and the double of its Steiner length equals to the usual length of $D$.
\end{rmk}

Consider the Fermat point for $(\gamma_1,\gamma_2,\gamma_3)$. Recall that the Fermat graph is obtained by connecting the Fermat point to its orthogonal projections to the geodesics in the triple. In particular, the $\mathsf{L}$-value equals to the usual length of the Fermat graph. By the property of the Fermat point, we see that this graph has the smallest usual length among all graphs connecting $\gamma_i$'s. Hence it is a good candidate for our purpose. However, its Steiner length is not always the shortest. Below is an example where this is the case. 

\begin{ex}
	We consider the case where $\gamma_1$, $\gamma_2$ and $\gamma_3$ are lying in the same plane and disjoint from each other in $\HHH\cup\partial\HHH$. We further assume that none of them separates the other two into two different half planes. Assume that the Fermat point $p$ of $(\gamma_1,\gamma_2,\gamma_3)$ is on $\gamma_1$. Let $p_2$ and $p_3$ be its orthogonal projections to $\gamma_2$ and $\gamma_3$ respectively. By our discussion in the previous section, the angle $\angle p_2pp_3$ is greater than or equal to $2\pi/3$. Suppose that it is greater. Let $G_p$ denote the Fermat tree of $(\gamma_1,\gamma_2,\gamma_3)$. 
	
	We construct a new graph $G'$ from $G$ in the following way. Choose points $q_1$ and $q_2$ on $\gamma_1$ such that the angles $\angle p_2q_2q_3$ and $\angle p_3q_3q_2$ are both $2\pi/3$. It is easy to see that $p$ is contained in the segment $q_2q_3$. The new graph $G'$ is the union of the geodesic segments $p_2q_2$, $q_2q_3$ and $q_3p_3$.
	\bigskip
	
	\begin{center}
		\includegraphics[scale=0.6]{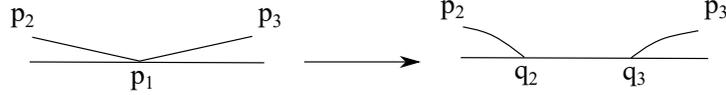}
		\captionof{figure}{Breaking the Fermat point}
	\end{center}
	By Lemma \ref{anglehyp2}, we know that:
	\begin{eqnarray*}
		|q_2p_2|+\frac{1}{2}|q_2p|<|pp_2|.\\
		|q_3p_3|+\frac{1}{2}|q_3p|<|pp_3|.
	\end{eqnarray*}
	Therefore, the new graph has shorter Steiner length. Note that this particular graph may not have the shortest Steiner length since $p_i$ is not necessarily the projection of $q_i$ to $\gamma_i$ for $i=2,3$.
\end{ex}

\begin{defn}
	The {\textit{Steiner tree}} of $(\gamma_1,\gamma_2,\gamma_3)$ is a connected graph $G$ connecting them with the shortest Steiner length $\mathsf{L}(G)$.
\end{defn}

Our next result is about the existence of the Steiner tree for a triple of geodesics and its characterization.
\begin{prop}
	For any triple $(\gamma_1,\gamma_2,\gamma_3)$, the Steiner tree exists. Moreover it is one of the following two types:
		\begin{enumerate}[(a)]
			\item It is $G_p$;
			\item It is a piece-wise geodesic with three segments where the middle one is contained in one of $\gamma_i$'s and the other two segments intersects $\gamma_{i-1}$ and $\gamma_{i+1}$ orthogonally. Moreover, the angle between two adjacent segments is $2\pi/3$.
		\end{enumerate}
\end{prop}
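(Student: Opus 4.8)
The plan is to realize the Steiner tree as a minimizer of the functional $\mathsf{L}$ over all connected finite graphs joining $\gamma_1,\gamma_2,\gamma_3$, establishing existence by a compactness argument patterned on the existence proof for the Fermat point, and then pinning down the two admissible combinatorial types by a first-variation analysis together with a combinatorial reduction.

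For existence, I would first reduce to trees: deleting an edge lying on a cycle preserves connectivity and contact with all three geodesics while not increasing $\mathsf{L}$, so a minimizing sequence may be taken among connecting trees. The Fermat tree $G_p$ is a competitor with $\mathsf{L}(G_p)=L(\widetilde{p})$ equal to the Fermat value, which is $<R_H$ by the bound used in the Fermat existence proof; hence the infimum of $\mathsf{L}$ is $<R_H$. Since $\mathsf{L}(G)\ge \tfrac12 L(G)$, any near-minimizing tree has ordinary length $\le 2R_H$, so a connected tree touching the three pairwise-disjoint geodesics with such length is confined to a compact neighborhood $N(R)$ with $R=R(R_H)$. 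After merging collinear edges and pruning superfluous leaves, such a tree can be taken with a bounded number of edges, so the minimization runs over finitely many combinatorial types, each a continuous optimization over a compact configuration space inside $\overline{N(R)}$; the minimum is therefore attained.

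To classify the minimizer $T$, I would apply first variation, using that the derivative of arc length at a point moving with unit speed is minus the cosine of the angle to each incident edge (the same trigonometric input as in Lemma \ref{anglehyp2}). This forces: (i) each leaf of $T$ is the foot of the perpendicular from its neighbour, so terminal edges meet the geodesics orthogonally; (ii) an interior vertex off all three geodesics (a \emph{free} vertex) is trivalent with all three angles equal to $2\pi/3$, since higher valence or unequal angles admit a shortening; and (iii) at a vertex on some $\gamma_i$ from which $T$ runs along $\gamma_i$, the half-weight of the $\gamma_i$-segment turns the stationarity condition into $\tfrac12+\cos\theta=0$, so the angle $\theta$ between the $\gamma_i$-segment and each off-geodesic edge is $2\pi/3$, while those edges again meet $\gamma_{i-1},\gamma_{i+1}$ orthogonally. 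A tree joining three geodesics is a Steiner network on three terminals and so needs at most one branch point: any configuration with two free vertices produces a redundant leaf or admits a merging, and can be simplified without increasing $\mathsf{L}$. If $T$ has a free vertex $s$, then (ii) and (i) say $s$ is balanced with legs perpendicular to the three geodesics, whence $s=\widetilde{p}$ and $T=G_p$, which is type (a); no extra geodesic segment can occur, since it would only add positive length. If $T$ has no free vertex, then (iii) forces $T$ to be a path made of an orthogonal bridge to $\gamma_{i-1}$, a segment along some $\gamma_i$, and an orthogonal bridge to $\gamma_{i+1}$, with $2\pi/3$ angles at the two junctions, which is type (b) (degenerating to $G_p$ lying on a geodesic when the middle segment shrinks to a point).

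The step I expect to be the main obstacle is the combinatorial control in the classification: rigorously bounding the number of edges, ruling out complicated tree topologies, and showing the minimizer carries at most one free vertex and no superfluous geodesic segments. The half-weighting also needs care when an edge runs along a geodesic on both sides of a vertex, exactly the degenerate situation flagged in the remark before the statement, where $G$ must be replaced by an equivalent $G'$ so that doubling does not over-count. By contrast, the local angle identities in (i)--(iii) are routine once the cosine rules already employed for the Fermat point and in Lemma \ref{anglehyp2} are in hand. Finally, the dichotomy is consistent with the earlier results: the type (a) configuration is stationary precisely when a balanced point exists, i.e. when the triple is $2\pi/3$-acute, whereas in the $2\pi/3$-obtuse case the Fermat point lies on a geodesic, and when its internal angle strictly exceeds $2\pi/3$ the breaking construction of the preceding Example yields the strictly shorter type (b) tree.
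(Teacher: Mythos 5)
Your overall plan --- reduce to trees, then use first-variation/angle conditions to force orthogonal feet, balanced trivalent free vertices, and $2\pi/3$ junctions along a geodesic, and conclude the dichotomy --- is sound in outline, and its local ingredients are correct: your stationarity computation $\tfrac12+\cos\theta=0$ at a junction on $\gamma_i$ is exactly the source of the $2\pi/3$ angles in type (b), and it matches what the paper extracts from Lemma \ref{anglehyp2} and Case I of Theorem \ref{Fermat}. However, there is a genuine gap precisely at the step you yourself flag as ``the main obstacle,'' and the one-line justification you offer for it does not hold up: you assert that a tree joining three geodesics ``is a Steiner network on three terminals and so needs at most one branch point.'' That classical fact concerns three \emph{point} terminals; here the terminals are geodesics, a competitor may touch a given $\gamma_i$ at several points or run along it for a positive distance, and segments lying on the $\gamma_i$'s are counted with weight $\tfrac12$, so this is a weighted Steiner problem to which the classical terminal count does not directly apply. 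Ruling out trees that hit $\gamma_1$, travel along it, bridge to $\gamma_3$, return, and so on, is the actual content of the proof; in the paper it occupies the whole of Lemma \ref{123}, a case analysis (Cases 1--4.7) on the shortest subpaths $P_{12}$, $P_{13}$ and the connector $Q_{23}$ inside an arbitrary finite simply connected graph, showing that every competitor is dominated in Steiner length by one of three explicit simple types. Nothing in your proposal substitutes for that analysis.

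The same missing control undermines your existence argument. The paper never needs global compactness over all tree topologies: Lemma \ref{123} dominates every competitor either by a graph whose Steiner length is at least that of the Fermat tree $G_p$ (whose existence is already established), or by a type-(3) graph whose middle edge lies on a specified $\gamma_j$; the latter form a two-parameter family (the positions of the two junctions on $\gamma_j$) to which the extreme value theorem applies directly, and the Steiner tree is then the best of finitely many explicit candidates. Your route instead requires (i) a uniform bound on the number of edges of near-minimizers, which your ``merging and pruning'' remark only sketches, and (ii) lower semicontinuity of $\mathsf{L}$ under degenerations in which an off-geodesic edge comes to lie on some $\gamma_i$ (its weight drops from $1$ to $\tfrac12$ in the limit) or an edge shrinks to a point, so that infima over the closures of the combinatorial strata are attained in the admissible class. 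Neither point is fatal, and your plan could in principle be completed, but as written the core combinatorial classification is assumed rather than proved --- and that classification is exactly what the paper's proof supplies.
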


\begin{proof}
	In this proof, by length we will always mean the Steiner length. Recall that we only consider finite connected graphs. All graphs that we will consider always connect the three geodesics $\gamma_j$'s.
	
	Since removing one edge to break a loop always reduces the length of the graph, it is enough to only consider simply connected graphs. In the rest of the proof, we assume that this is always the case.
	
	We first prove the following lemma: 
	\begin{lem}\label{123}
		Given any finite simply connected graph $G$, its length is always bigger or equal to that of a graph with one of the following three types:
		\begin{enumerate}
			\item One valence $3$ vertex and three valence $1$ vertices;
			\item One valence $2$ vertex and two valence $1$ vertices;
			\item Two valence $2$ vertices and two valence $1$ vertices.
		\end{enumerate}
	\end{lem}
	\begin{proof}[{\it Proof of Lemma \ref{123}}]
		Consider the path $P_{12}$ (resp. $P_{13}$) with shortest Steiner length in $G$ connecting $\gamma_1$ and $\gamma_2$ (resp. $\gamma_1$ and $\gamma_3$). Notice that $P_{12}$ (resp. $P_{13}$) is a piecewise geodesic and meet each one of $\gamma_1$ and $\gamma_2$ (resp. $\gamma_1$ and $\gamma_3$) once. Let $p_2$ and $p_3$ denote the intersections $P_{12}\cap \gamma_1$ and $P_{13}\cap \gamma_1$ respectively.  
		
		Notice that the choices for $P_{12}$ and $P_{13}$ may not be unique. We choose and fix one choice. In the following, we discuss case by case.
		
		\noindent \textbf{Case 1}: $P_{12}$ intersects $\gamma_3$. We consider the orientation of $P_{12}$ from $\gamma_1$ to $\gamma_2$ and denote by $p_1$ the point where $P_{12}$ meets $\gamma_3$ for the first time and by $p_2$ the point where $P_{12}$ leaves $\gamma_3$ for the last time. Let $q_1$ and $q_2$ be the projection of $p_1$ and $p_2$ to $\gamma_1$ and $\gamma_2$ respectively. Then the graph $G'$ given by the union of segments $q_1p_1$, $p_1p_2$ and $p_2q_2$ is of type $(2)$ or $(3)$ with shorter length than $G$. 
		\smallskip
		
		\noindent \textbf{Case 2}: $P_{13}$ intersects $\gamma_2$. The argument is the same as in Case 1.
		\smallskip
		
		\noindent \textbf{Case 3}: both $P_{12}\cap\gamma_3$ and $P_{13}\cap\gamma_2$ are empty and $P_{12}\cap P_{13}\neq\emptyset$. We consider the point on $P_{12}$ where it starts from $\gamma_2$ and meets $P_{13}$ for the first time. Then by connecting this point to its projections to $\gamma_j$'s respectively, we obtain a graph of type $(1)$ with length shorter than $G$.
		\smallskip
		
		\noindent \textbf{Case 4}: both $P_{12}\cap\gamma_3$ and $P_{13}\cap\gamma_2$ are empty and $P_{12}\cap P_{13}=\emptyset$. We denote by $Q_{23}$ the shortest path in $G$ connecting $P_{12}$ and $P_{13}$. Hence, the path $Q_{23}$ does not contain any edges in $P_{12}$ or in $P_{13}$, and only meet each one of them once. Denote by $o_2$ and $o_3$ the vertices $P_{12}\cap Q_{23}$ and $P_{13}\cap Q_{23}$ respectively. All vertices in the subgraph $Q_{23}$ except $o_2$ and $o_3$ have valence $2$.
		\smallskip
		
		\noindent \textbf{Case 4.1}: $Q_{23}$ intersects $\gamma_1$, $\gamma_2$ and $\gamma_3$. Since $Q_{23}$ is a piecewise geodesic, it contains a subgraph which starts from $\gamma_i$, intersects $\gamma_j$ for some times, then ends at $\gamma_k$, where $\{i,j,k\}=\{1,2,3\}$. Using the same argument as for case 1, we can find a path of type $(2)$ or $(3)$ with shorter length than $G$.
		\smallskip
		
		\noindent \textbf{Case 4.2}: $Q_{23}$ is disjoint from $\gamma_2$, but intersects $\gamma_1$ or $\gamma_3$. Then we consider $Q_{23}'$ the union of $Q_{23}$ and the subgraph of $P_{12}$ between $\gamma_2$ and $o_2$. By our construction, the graph $Q_{23}'$ is a simple piecewise geodesic connecting $\gamma_1$, $\gamma_2$ and $\gamma_3$, hence it contains a subgraph starting from $\gamma_2$, intersecting $\gamma_j$ form some times, then ending at $\gamma_k$ with $\{j,k\}=\{1,3\}$. Using the same argument as for case 1, this we can find a path of type $(2)$ or $(3)$ with shorter length than $G$.
		
		\smallskip
		
		\noindent \textbf{Case 4.3}: $Q_{23}$ is disjoint from $\gamma_3$, but intersects $\gamma_1$ or $\gamma_2$. We connect $Q_{23}$ to $\gamma_3$ using the subgraph of $P_{13}$ between $o_3$ and $\gamma_3$, then the proof is the same as in Case 4.2.
		\smallskip
		
		\noindent \textbf{Case 4.4}: $Q_{23}$ is disjoint from $\gamma_1$, but intersects $\gamma_2$ or $\gamma_3$. We connect $Q_{23}$ to $\gamma_1$ using the subgraph of $P_{12}$ between $o_2$ and $\gamma_1$, then the proof is the same as in Case 4.2.
		\smallskip
		
		\noindent \textbf{Case 4.5}: $Q_{23}$ is disjoint from $\gamma_1$ and $\gamma_2$, but intersects $\gamma_3$. We connect $P_{12}$ to $\gamma_3$ using the subgraph of $Q_{23}$ between $o_2$ and its first intersection point with $\gamma_3$ counting from $o_2$. Then we consider the graph given by connecting $o_2$ to its three projections to $\gamma_1$, $\gamma_2$ and $\gamma_3$. It is of type $(1)$ with length shorter than that of $G$.
		\smallskip
		
		\noindent \textbf{Case 4.6}: $Q_{23}$ is disjoint from $\gamma_1$ and $\gamma_3$, but intersects $\gamma_2$.	We connect $P_{13}$ to $\gamma_2$ using the subgraph of $Q_{23}$ between $o_3$ and its first intersection point with $\gamma_2$ counting from $o_3$, then the proof is the same as in Case 4.5.
		\smallskip
		
		\noindent \textbf{Case 4.7}: $Q_{23}$ is disjoint from $\gamma_2$ and $\gamma_3$, but intersects $\gamma_1$. We connect $Q_{23}$ to $\gamma_2$ and $\gamma_3$ using parts of $P_{12}$ and $P_{13}$, such that the resulting graph meets each of $\gamma_2$ and $\gamma_3$ once, and has only valence $2$ vertices except those two on $\gamma_2$ and $\gamma_3$. Then the proof is the same as in Case 1.
	\end{proof}
	Returning to the proof of the proposition, if $G$ is of type $(1)$ or type $(2)$, we can moreover assume that $G$ has no edge contained in any of the $\gamma_j$'s. Otherwise, either the edge on $\gamma_j$ is removable, or there are two of $\gamma_j$'s intersect each other, which contradict to the fact that $\gamma_j$'s are in general position. Therefore, its Steiner length is greater than or equal to that of the Fermat tree. 
	
	If $G$ is of type $(3)$, we may moreover assume that the middle edge is on $\gamma_j$ for some $j$. Otherwise, its length is the usual length, hence greater than that of the Fermat tree.  Without loss of generality, we may assume that the graph is the union of $p_2q_2$, $q_2q_3$ and $q_3p_3$, where $q_2q_3$ is contained in $\gamma_1$, and $p_2$ and $p_3$ are in $\gamma_2$ and $\gamma_3$ respectively. We may further assume  that $p_2$ (resp. $p_3$) is the orthogonal projection of $q_2$ (resp. $q_3$), otherwise, we can shorten the Steiner length by replacing $p_2$ (resp. $p_3$) by the orthogonal projection. Then $G$ is determined by the positions of $q_2$ and $q_3$ and the Steiner length is a function of $q_2$ and $q_3$. By the extreme value theorem, there is a graph $G$ which realizes the minimal Steiner length among all type $(3)$ graphs with middle edge on $\gamma_1$. Moreover it is either of type $(2)$ or of type $(3)$. When it is of type $(2)$, its length is greater than that of the Fermat tree. When it is of type $(3)$, by Lemma \ref{anglehyp2} and the proof of Case I in Theorem \ref{Fermat}, the angle between each pair of adjacent edges must be $2\pi/3$.
\end{proof}
\begin{rmk}\label{twosteinertree}
	Unlike the uniqueness result of the Fermat point (or Fermat graph), it is possible to have up to two Steiner trees for a triple $(\gamma_1,\gamma_2,\gamma_3)$. An example would be a triple of coplanar geodesics. Consider a triple $(\gamma_1,\gamma_2,\gamma_3)$ contained in the same plane, such that no one separates other two. We assume that the distances between pairs of them are $(a,a,2a)$ with $a>0$. By studying the planar points for this triple of geodesics, we can see that there exists a balanced point for this triple of geodesics. By our previous discussion, this balanced point is the Fermat point $p$, and the Fermat tree $G$ is given by connecting the $p$ to $p_j$'s which are its projections to $\gamma_j$'s respectively.
	
	Assume that the distance between $\gamma_1$ and $\gamma_2$ is $2a$. Consider the involution $r_3$ with respect to $\gamma_3$. We denote by $\gamma_1'$ and $\gamma_2'$ the images of $\gamma_1$ and $\gamma_2$ under $r_3$ respectively. We denote by $D$ the double of $G$ under $r_3$. An observation is that the subgroup generated by the elliptic element $\eta$ of $\HHH$ fixing $p_3$, preserving the plane with angle $\pi/2$ preserves the set $\{\gamma_1,\gamma_2,\gamma_1',\gamma_2'\}$. Hence the image $\eta(D)$ is still a graph connecting $\gamma_1$, $\gamma_2$, $\gamma_1'$ and $\gamma_2'$. Moreover the middle edge of $\eta(D)$ lying on $\gamma_3$. Therefore, the graph $\eta(D)$ is the double of a graph $G'$ connecting $(\gamma_1,\gamma_2,\gamma_3)$ with one edge on $\gamma_3$.
	
	By our assumption, we can see that $L(G)=L(G')$. Both of them are Steiner tree for $(\gamma_1,\gamma_2,\gamma_3)$, while $G$ is of type $(a)$ and $G'$ is of type $(b)$
	\begin{center}
		\includegraphics[scale=0.5]{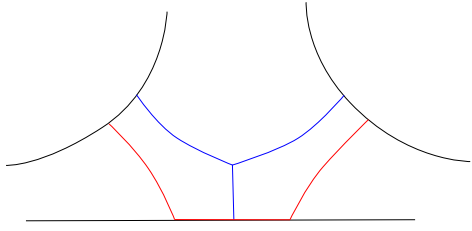}
		\captionof{figure}{Two Steiner trees}
	\end{center}
\end{rmk}
\subsection{The fourth edge orientation on \texorpdfstring{$\Sigma$}{Lg}}
Recall that $\Sigma$ is the tree of superbases. In the earlier section, we have discussed three edge orientation on $\Sigma$. In this part, we would like to associated to $\Sigma$ the fourth edge orientation using the Steiner tree introduced above.

More precisely, let $\rho$ be an irreducible representation of $\FF_2$ into $\SLtwoC$ sending all primitive elements to loxodromic elements. Given an ordered triple $(X,Y,Z)$, by proposition \ref{irre1}, there is a triple of $\pi$-rotation $(r_X,r_Y,r_Z)$ such that:
\begin{eqnarray*}
	\rho(X)&=&r_Yr_Z,\\
	\rho(Y)&=&r_Zr_X,\\
	\rho(Z)&=&-r_Xr_Y.
\end{eqnarray*}
We denote by $\gamma_X$, $\gamma_Y$ and $\gamma_Z$ the axes of $r_X$, $r_Y$ and $r_Z$ respectively. Since $\rho(X)$, $\rho(Y)$ and $\rho(Z)$ are all loxodromic elements by assumption, the triple $(\gamma_X,\gamma_Y,\gamma_Z)$ is generic.

Although the triple of geodesics depends on the choice of representative of a superbasis, the relative position among the three geodesics in the triple does not. Given two representatives of a superbasis, the two associated triples of geodesics are different by a diagonal action of an isometry of $\HHH$. In this way, we associate to a superbasis a triple of geodesics up to isometry. By our work in the earlier part of this section, it admits a Steiner tree. 

Recall that $\widetilde{\Omega}$ consists of pairs $([W],v)$ where $[W]$ is the $\Inn^{\mathfrak{e}}$-orbit of a primitive element $W$ and $v$ is a vertex on the boundary of connected component in $\DD\setminus\Sigma$ identified with $[W]$. We can define a function $L_\rho:\widetilde{\Omega}\rightarrow\RR_{>0}$ sending $([W],v)$ to the Steiner length of the Steiner tree associated to $v$. This function $L_\rho$ then induces an edge orientation on $\Sigma$, such that the orientation goes from vertex with bigger $L_\rho$-value to the one with smaller $L_\rho$-value.
\begin{prop}\label{attractingtree4}
	If $\rho$ satisfies BQ-conditions, then there is a compact attracting subtree for the edge orientation induced by $L_\rho$.
\end{prop}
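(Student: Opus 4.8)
The plan is to compare the edge orientation induced by $L_\rho$ with the one induced by the real translation distance $\widetilde a_\rho$, and then to invoke Proposition~\ref{attractingsubtree2}. Since $L_\rho([W],v)$ depends only on the vertex $v$, write $L_\rho(v)$ for the Steiner length of the triple attached to $v=(X,Y,Z)$. Recall first, from the two-involution decomposition in Section~2, that $\rho(X)=r_Yr_Z$, $\rho(Y)=r_Zr_X$, $\rho(Z)=-r_Xr_Y$ force the pairwise distances between the axes to be
\[
\mathrm{d}(\gamma_Y,\gamma_Z)=\tfrac12 a(\rho(X)),\qquad \mathrm{d}(\gamma_Z,\gamma_X)=\tfrac12 a(\rho(Y)),\qquad \mathrm{d}(\gamma_X,\gamma_Y)=\tfrac12 a(\rho(Z)).
\]

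The heart of the proof is the estimate, for $v$ at distance $N$ from a fixed base edge $e_0$,
\[
L_\rho(v)=\tfrac12\max\bigl\{a(\rho(X)),a(\rho(Y)),a(\rho(Z))\bigr\}+O(\exp(-kN)),
\]
where $k$ is the lower Fibonacci constant of $\log^+|f_\rho|$. To prove it, assume $|z|\ge|y|\ge|x|$, so $a(\rho(Z))$ is the largest translation distance and $\gamma_Z$ is the \emph{middle} geodesic, at distance $\tfrac12a(\rho(Y))$ from $\gamma_X$ and $\tfrac12a(\rho(X))$ from $\gamma_Y$. For the upper bound I would exhibit the explicit type-(b) candidate running from $\gamma_X$ to $\gamma_Z$ and from $\gamma_Z$ to $\gamma_Y$ along the two relevant common perpendiculars (the axes $\delta_Y,\delta_X$) and along the segment of $\gamma_Z$ between their feet; its Steiner length is $\tfrac12a(\rho(X))+\tfrac12a(\rho(Y))+\tfrac12 a(\gamma_Z)$, where $a(\gamma_Z)$ is the hexagon side on $\gamma_Z$. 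By the Proposition at the end of Section~4 (the convergence $(l(\gamma_X),l(\gamma_Y),l(\gamma_Z))\to(i\pi,i\pi,0)$) one has $a(\gamma_Z)=\mathrm{Re}(2l(\gamma_Z))=O(\exp(-kN))$, giving $L_\rho(v)\le \tfrac12(a(\rho(X))+a(\rho(Y)))+O(\exp(-kN))$. For the lower bound, any connected graph joining the three geodesics contains a path from $\gamma_X$ to $\gamma_Y$; cutting it at its first and last meeting with $\gamma_Z$, the off-geodesic portions carry full weight and have total length at least $\mathrm{d}(\gamma_X,\gamma_Z)+\mathrm{d}(\gamma_Z,\gamma_Y)=\tfrac12(a(\rho(X))+a(\rho(Y)))$, while if the path avoids $\gamma_Z$ it bridges $\gamma_X,\gamma_Y$ directly with full-weight length at least $\tfrac12a(\rho(Z))$. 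Finally Relation~(\ref{commutator}) yields $z\approx xy$ for the dominant trace, so $a(\rho(Z))=a(\rho(X))+a(\rho(Y))+O(\exp(-kN))$ and the two bounds both agree with $\tfrac12\max_i a(\rho(W_i))$ up to $O(\exp(-kN))$.

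With the estimate in hand I would transfer the problem to the orientation of $M(v):=\max_i a(\rho(W_i))$. At an edge $e=(X,Y;Z,Z')$ one compares $M$ at $v=(X,Y,Z)$ and $v'=(X,Y,Z')$; writing $m=\max\{a(\rho(X)),a(\rho(Y))\}$, this reduces to comparing $a(\rho(Z))$ and $a(\rho(Z'))$, exactly as for $\widetilde a_\rho$, unless both exceed $m$ or both fall below $m$. Using the neighbour relations $z+z'=xy$ and $zz'=x^2+y^2-\mu$ of (\ref{growth}) together with the BQ-condition that $|f_\rho|^{-1}([0,2])$ is finite, I would show that on all but finitely many edges $\min\{|x|,|y|\}$ is large enough that the quadratic $t^2-xy\,t+(x^2+y^2-\mu)$ has one root of modulus $\approx|xy|$ and one of bounded modulus; hence exactly one of $a(\rho(Z)),a(\rho(Z'))$ exceeds $m$, the two degenerate cases are excluded, and the $M$-orientation agrees with the $\widetilde a_\rho$-orientation. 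In the same range the separation $|L_\rho(v)-L_\rho(v')|$ is bounded below by $\log(\min\{|x|,|y|\}/2)$, which stays above a fixed $\delta>0$ off a finite set (once more by Fibonacci growth) and therefore dominates the $O(\exp(-kN))$ error of the estimate; so the $L_\rho$-orientation too coincides with the $\widetilde a_\rho$-orientation off a finite set, and the Proposition follows from Proposition~\ref{attractingsubtree2}.

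The step I expect to be the main obstacle is the uniform lower bound in the geometric estimate: one must control the weight-$\tfrac12$ bookkeeping of the Steiner length (segments lying along the $\gamma_j$ count half) well enough to see that no connecting graph can beat the two smaller common-perpendicular distances by more than $O(\exp(-kN))$, and to do so uniformly in $N$. This is precisely where the characterization of the Steiner tree and the degeneration of the right-angled hexagon $H(X,Y,Z)$ — three long sides on the $\delta$'s and three vanishing sides on the $\gamma$'s — are used decisively.
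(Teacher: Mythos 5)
Your strategy is genuinely different from the paper's. The paper never compares the $L_\rho$-orientation with another one: using the hexagon degeneration from the proof of Proposition \ref{attractingsubtree3}, it shows that every vertex far from a base edge has a $2\pi/3$-obtuse hexagon whose Steiner tree is of type (2), and that moving one step farther away \emph{strictly increases} the Steiner length (the internal angle at the new middle geodesic flips from $2\pi/3$-acute to $2\pi/3$-obtuse); this one-step monotonicity forces all distant edges to point inward, which is the compact attracting subtree. Your upper bound for $L_\rho(v)$ via the explicit type-(b) candidate along $\delta_X$, $\delta_Y$ and the short hexagon side on $\gamma_Z$ is correct and is essentially the estimate the paper itself uses; the problem lies in the reduction to Proposition \ref{attractingsubtree2}.

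The reduction step has a genuine gap: the claim that ``on all but finitely many edges $\min\{|x|,|y|\}$ is large'' is false. Only finitely many \emph{regions} of $\DD\setminus\Sigma$ carry a trace of bounded modulus, but each region borders \emph{infinitely many} edges, and along the whole fan of edges bordering a fixed region $[X_0]$ one has $\min\{|x|,|y|\}=|x_0|$, a constant; moreover BQ permits $|x_0|\le 2$ (only real traces in $[-2,2]$ are excluded, and only $|f_\rho|^{-1}([0,2])$ need be finite). On such fans your argument breaks at every point where it claims finiteness: the small root of $t^2-xy\,t+(x^2+y^2-\mu)$ has modulus comparable to $e^{-a(\rho(X_0))/2}|y|$, which is unbounded rather than bounded; the separation bound $\log(\min\{|x|,|y|\}/2)$ can be $\le 0$, so it dominates nothing; and the approximation $z\approx xy$ is simply wrong there, since in fact $z\approx \lambda_{X_0}\,y$ with $\lambda_{X_0}=\frac{1}{2}(x_0+\sqrt{x_0^2-4})$ the larger eigenvalue of $\rho(X_0)$, and $\lambda_{X_0}/x_0\not\to 1$ when $|x_0|$ stays bounded --- so the asserted $O(\exp(-kN))$ error in $a(\rho(Z))=a(\rho(X))+a(\rho(Y))+\cdots$ is unproved by the route you describe. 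These fan edges are not a finite exceptional set, and they are precisely the edges on which the attracting structure is decided. The facts you need are actually true under BQ, but proving them requires the eigenvalue comparison $|z|\approx e^{a(\rho(X))/2}|y|$, $|z'|\approx e^{-a(\rho(X))/2}|y|$ together with a uniform lower bound $a(\rho(W))\ge a_0>0$ over all primitives (which uses that all traces stay a definite distance from the segment $[-2,2]$); this, combined with the weight-$\frac{1}{2}$ bookkeeping in your lower bound that you yourself flag as unresolved, means the proof as written does not go through.
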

\begin{proof}
	Consider the right angled hexagon $H(X,Y,Z)$ associated to $(\gamma_X,\gamma_Y,\gamma_Z)$. Without loss of generality, we may assume that with respect to the fixed basis $(X_0,Y_0)$, the element $Z$ has bigger word length than $X$ and $Y$. Let $e$ denote the edge of $\Sigma$ adjacent to the components $[X_0]$ and $[Y_0]$.
	
	In the proof of proposition \ref{attractingsubtree3}, we saw that if the vertex $(X,Y,Z)$ is at distance $N$ to the edge $e$ with $N$ large enough, the complex lengths of the three sides of $H(X,Y,Z)$ on $\gamma_X$, $\gamma_Y$ and $\gamma_Z$ will be uniformly close to $i\pi$, $i\pi$ and $0$ respectively. It does not depend on the choice of the geodesic on $\Sigma$.
	
	The internal angle on $\gamma_Z$ is bounded from below by the transport angle along $\gamma_Z$, and the planar point on $\gamma_Z$ is contained in the side of $H(X,Y,Z)$ on $\gamma_Z$. Therefore, there exists a positive integer $N$, such that if the distance from $(X,Y,Z)$ to the starting vertex is bigger than $N$, the associated right angled hexagon is $2\pi/3$-obtuse. Moreover, the associated Steiner tree is of type (2), meaning that it has one valence $2$ vertex on $\gamma_Z$ and two valence $1$ vertices on $\gamma_X$ and $\gamma_Y$ respectively. The Steiner length is more or less the sum of the lengths of two sides of $H(X,Y,Z)$ on the axes of $\rho(X)$ and $\rho(Y)$. The difference is bounded from above by the sum of the lengths of the sides of $H(X,Y,Z)$ on $\gamma_X$, $\gamma_Y$ and $\gamma_Z$.
	
	Consider going one step farther than $(X,Y,Z)$ and denote the vertex by $(X',Y,Z)$. Then the right angled hexagon $H(X',Y,Z)$ is also $2\pi/3$-obtuse and its Steiner tree $G(X',Y,Z)$ is of type (2). Up to isometry, the triple of geodesics for $(X',Y,Z)$ is $(\gamma_X,\gamma_Y,r_X(\gamma_Z))$. The valence $2$ vertex $v_{X'}$ of $G(X',Y,Z)$ is on $\gamma_X$. The internal angle on $\gamma_X$ change from $2\pi/3$-acute to $2\pi/3$-obtuse. Hence the Steiner length of $G(X,Y,Z)$ is strictly smaller than that of $G(X',Y,Z)$. Therefore, there is a compact attracting subtree of $\Sigma$ with respect to the edge orientation induces by $L_\rho$.
	\begin{center}
		\includegraphics[scale=0.6]{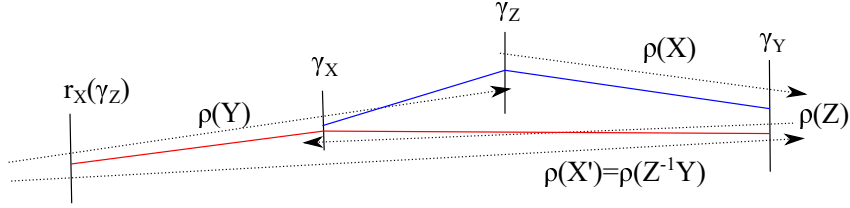}
		\captionof{figure}{More one step further}
	\end{center}
\end{proof}
\section{The carrier \texorpdfstring{$2$}{Lg}-graphs for an irreducible representation}
Recall that the definition of a carrier $n$-graph $\Gamma$ for a hyperbolic $3$-manifold $M$ involves an immersion of $\Gamma$ into $M$. Let $\rho$ be a lift of the representation of $\pi_1(M)$ into $\SLtwoC$ corresponding to the hyperbolic structure on $M$. By taking the lift to the universal cover, we obtain an immersion of the universal cover $\widetilde{\Gamma}$ of $\Gamma$ into $\HHH$ which is $\rho$-equivariant. This immersion of $\widetilde{\Gamma}$ is unique up to the action of $\rho(\pi_1 (M))$. Following this idea, we generalize the definition of a carrier graph to that for an irreducible representation. Recall that we are interested in irreducible representations $\rho:\FF_2\rightarrow\SLtwoC$ and we will only consider $2$-graphs.
\begin{rmk}
	One may define  carrier graphs using any graph with no restriction on valence, however by lemma \ref{anglehyp2}, the carrier graph with minimal length, if exists, is always a trivalent graph. As such, we only will consider trivalent $2$-graphs.
\end{rmk}

\subsection{Definition}
Let $\Gamma$ be a trivalent $2$-graph. Then there are only two possible combinatorial types of $\Gamma$ which we call the Buckle and the Dumbbell, see figure below.

\begin{center}
	\includegraphics[scale=0.8]{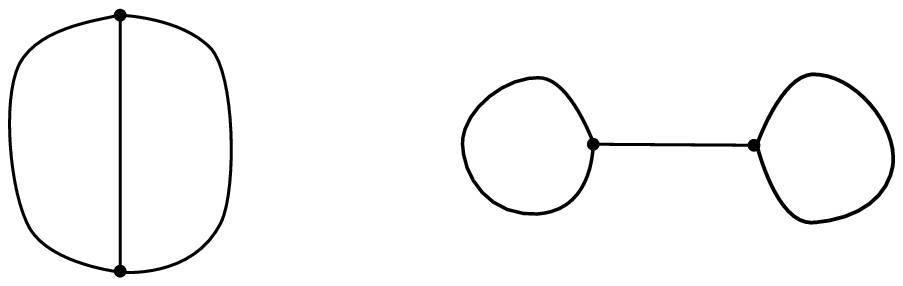}
	\captionof{figure}{A Buckle and a Dumbbell}
\end{center}

We can give $\Gamma$ a marking by assigning a free basis $(X,Y)$ to two distinct simple loops on $\Gamma$ respectively. We call it a marked $2$-graph. For a marked $2$-graph $(\Gamma, (X,Y))$, a fundamental domain $D$ in $\widetilde{\Gamma}$ can be chosen to be the one with four boundary points $p_1$, $p_1'$, $p_2$ and $p_2'$, such that $X(p_1)=p_1'$ and $Y(p_2)=p_2'$. As a normalization, when $\Gamma$ is equipped with a metric $l$, the points $p_1$ and $p_2$ will be chosen so that their projections on $\Gamma$ are the two mid-points of two edges of $\Gamma$ where they lie on respectively. We will only consider this fundamental domain $D$ for each marked $2$-graph. Depending on the marked $2$-graph, the fundamental domain $D$ has $2$ possible combinatorial types given in the figure below:
\begin{center}
	\includegraphics[scale=0.5]{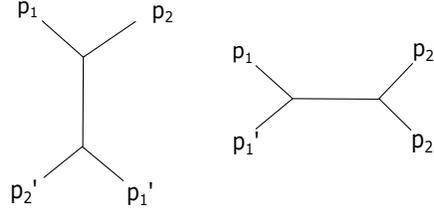}
	\captionof{figure}{Possible types of $D$}
\end{center}

\begin{defn}
	Two marked $2$-graphs $(\Gamma_1,(X_1,Y_1))$ and $(\Gamma_2,(X_2,Y_2))$ are said to be {\textit{equivalent}}, if the following holds:
	\begin{enumerate}
		\item There exists a homeomorphism from $\Gamma_1$ to $\Gamma_2$;
		\item This homeomorphism induces an automorphism of $\FF_2$ sending $(X_1,Y_1)$ to $(X_2,Y_2)$. 
	\end{enumerate}
\end{defn}
If we consider the space of all marked $2$-graphs and take its quotient by the above equivalence relation,  we obtain $2$ classes corresponding to the above two types of fundamental domain respectively. To avoid talking about class and taking representative, we will choose and fix one representative in each class with the same marking $(X_0,Y_0)$. These two graphs will be the topological model for the later discussion. As a convention, in the reminder of this paper, by a $2$-graph, we mean one of the two topological models and simply denote it by $\Gamma$.

\begin{rmk}
	A marking on a graph induces an isomorphism between the fundamental group of the graph and $\FF_2$. By choosing and fixing a marking $(X_0,Y_0)$, we choose and fix the isomorphism.
\end{rmk}

\medskip

Now let $\rho$ be an irreducible representation of $\FF_2$ into $\SLtwoC$. 
\begin{defn}
	A local homeomorphism $\Psi:\widetilde{\Gamma}\rightarrow\HHH$ is said to be $\rho$-{\textit{equivariant}} if there exists a free basis $(X,Y)$ of $\FF_2$ such that the following commutative diagram holds:
		\begin{equation*}
			\xymatrixcolsep{5pc}\xymatrix{
				\widetilde{\Gamma}\ar[d]^{\Psi}\ar[r]^{W}&\widetilde{\Gamma}\ar[d]^{\Psi}\\
				\HHH\ar[r]^{\rho(h_\Psi(W))}&\HHH}
		\end{equation*}
		where $h_\Psi$ is the automorphism of $\FF_2$ sending $X_0$ and $Y_0$ to $X$ and $Y$ respectively.
\end{defn}
\begin{defn}
	A {\textit{marked carrier}} $2$-{\textit{graph}} for $\rho$ is a pair $(\Gamma,\Psi)$, where
	\begin{itemize}
		\item $\Gamma$ is a $2$-graph;
		\item $\Psi:\widetilde\Gamma\rightarrow\HHH$ is a $\rho$-equivariant local homeomorphism.
	\end{itemize}
\end{defn}
The pullback of the intrinsic metric on $\Psi(\widetilde{\Gamma})$ by $\Psi$ induces a metric on $\Gamma$. We denote by $l(\Gamma,\Psi)$ this metric on $\Gamma$. 
\begin{defn}
	The {\textit{length}} of $\Gamma$ with respect to $l(\Gamma,\Psi)$ is defined to be the sum of the $l(\Gamma,\Psi)$-length of all the edges. We denote it by $L(\Gamma,\Psi)$.
\end{defn}

\begin{defn}
	Two marked carrier 2-graphs $(\Gamma_1,\Psi_1)$ and $(\Gamma_2,\Psi_2)$ are said to be {\textit{equivalent}} if 
	\begin{enumerate}
		\item $\Gamma_1$ and $\Gamma_2$ are the same $2$-graph;
		\item there exists an element $W$ of $\FF_2$, such that $Im(\Psi_1)=\rho(W)(Im(\Psi_2))$;
		\item the two automorphisms $h_{\Psi_2}$ and $h_{\Psi_1}$ satisfies:
			\begin{equation*}
				h_{\Psi_2}=h'h_{\Psi_1},
			\end{equation*}
		      where $h'\in\Inn^\mathfrak{e}\FF_2$.
	\end{enumerate}
    An equivalence class of marked carrier graph is called a {\textit{carrier}} 2-{\textit{graph}} for $\rho$. We denote it by $(\Gamma,[\Psi])$. Its {\textit{length}} $L(\Gamma,[\Psi])$ is defined to be the length $L(\Gamma,\Psi)$ for a representative $(\Gamma,\Psi)$ in the equivalence class.
\end{defn}
\begin{rmk}
	This definition is similar to the one for the equivalence class of marked hyperbolic structure on $3$-manifold. More precisely, the conditions (1) says that the two $2$-graphs homeomorphic to each other, the condition (2) implies that the two $2$-graphs equipped with induced metrics are isometric and the condition (3) says that the marking on the two $2$-graphs are the same. For $3$-manifold, the condition $(2)$ is equivalent to the two pullback metric are isometric. This is not the case for carrier graph. Instead, the condition (2) is strictly stronger than that two pullback metrics are isometric, since we consider the intrinsic metrics.
\end{rmk}

\subsection{Decomposition of the space of carrier \texorpdfstring{$2$}{Lg}-graphs}
Since we are interested in looking for the carrier $2$-graph with shortest length, it is reasonable to only consider the geodesic carrier $2$-graphs, meaning that those carrier $2$-graphs $(\Gamma, [\Psi])$, such that all edges of $\widetilde{\Gamma}$ are sent to geodesics segments by $\Psi$. By lemma \ref{anglehyp2}, we can moreover assume that the edges of $\widetilde{\Gamma}$ are sent to geodesic segments  with strictly positive length.

Let $\mathcal{CG}_\rho$ denote the space of all geodesic carrier $2$-graphs associated to $\rho$. By our definition, the carrier graphs can be classified using $h_\Psi$. Let $(X,Y)$ be a basis of $\FF_2$. We denote by $\mathcal{CG}_\rho([X],[Y])$ the subspace consisting of equivalence classes of all marked carrier $2$-graphs with $h_\Psi(X_0)\in[X]$ and $h_\Psi(Y_0)\in [Y]$. Therefore we can write $\mathcal{CG}_\rho$ into a disjoint union of $\mathcal{CG}_\rho([X],[Y])$'s.

\begin{defn}
	A carrier $2$-graph $(\Gamma,[\Psi])\in\mathcal{CG}_\rho$ is said to be {\textit{critical}}, if it realizes the minimum of the length function restricted on $\mathcal{CG}_\rho([X],[Y])$ for some $(X,Y)$. It is called {\textit{minimal}} if it has the minimal length among all carrier graphs.
\end{defn}

Now let us recall the main result:
\begin{mainthm}
	If $\rho:\FF_2\rightarrow\SLtwoC$ satisfies the BQ-conditions, then it admits finitely many critical carrier $2$-graphs.
\end{mainthm}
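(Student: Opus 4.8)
The plan is to transfer the statement to the tree of superbases $\Sigma$ and to extract finiteness from the compactness of the attracting subtrees of Corollary \ref{3orientation} and Proposition \ref{attractingtree4}. The first task is a dictionary between geodesic carrier $2$-graphs and $\Sigma$. A class $\mathcal{CG}_\rho([X],[Y])$ is indexed by an ordered pair of Farey neighbours, hence by an edge $e$ of $\Sigma$, whose two endpoints are the superbases $(X,Y,(XY)^{-1})$ and $(X,Y^{-1},YX^{-1})$. Since one may replace $Y$ by $Y^{-1}$ without leaving the class, the candidate minimal realizations are the doubled Steiner trees at these two endpoint superbases (using Proposition \ref{irre1} and the doubling construction of Section 6.3), each of which is a Buckle or a Dumbbell according as the Steiner tree is of type (a) or type (b). First I would check, from the intrinsic-metric description, that the length of such a realization equals twice the Steiner length $L_\rho$ of the associated triple of $\pi$-rotation axes, up to the collinearity correction discussed in the remark following the definition of $\mathsf{L}$.

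Granting this dictionary, the existence of a minimizer inside a fixed class is handled as the existence statements for the Fermat point (Theorem \ref{Fermat}) and the Steiner tree: the relevant realizations are confined to a finite intersection of tubular neighbourhoods $N(R)$, on which the length function is continuous and proper, so the extreme value theorem produces a minimizer whenever the infimum is attained inside the class, and the convexity lemma gives its uniqueness once the combinatorial type is fixed. A critical carrier $2$-graph is exactly such an attained minimum, and each one determines its class $([X],[Y])$, so distinct critical graphs lie in distinct classes.

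Finiteness therefore amounts to showing that the infimum is attained for only finitely many edges $e$. If $e$ lies outside the compact attracting subtree for the $L_\rho$-orientation, then by the analysis in the proof of Proposition \ref{attractingtree4} the endpoint superbasis nearer to infinity has a $2\pi/3$-obtuse right-angled hexagon whose Steiner tree is of type (b), and moving one edge toward the subtree strictly decreases $L_\rho$. Using this strict monotonicity together with Fact (2) of Theorem \ref{fpfortriple} (which puts the Fermat point on the geodesic carrying the obtuse angle), I would argue that the minimizing family in $\mathcal{CG}_\rho([X],[Y])$ degenerates: the middle segment of the type (b) configuration slides until the realization coincides, in the limit, with a strictly shorter graph belonging to the adjacent class toward the subtree. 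Hence no critical carrier $2$-graph occurs at such an $e$. Intersecting with the attracting subtree for the angle orientation from Corollary \ref{3orientation}, which governs the $2\pi/3$-acute versus $2\pi/3$-obtuse (equivalently Buckle versus Dumbbell) dichotomy, confines every edge carrying a critical carrier $2$-graph to a common compact subtree of $\Sigma$; a compact subtree has finitely many edges, so there are finitely many critical carrier $2$-graphs.

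The step I expect to be the main obstacle is the degeneration argument of the third paragraph: one must show rigorously that outside the attracting subtree every candidate realization of a class can be strictly shortened within that class, with the infimum realized only in the limit by a graph of a neighbouring, strictly shorter class. This calls for a perturbation analysis of the doubled Steiner tree via the cosine-rule identities of Section 6.2 and the inequality (\ref{1}) proved for Fact (1), while carefully tracking the subtlety—noted after the definition of $\mathsf{L}$—that the intrinsic length of the double differs from $2L_\rho$ precisely when two edges become collinear along one of the axes, which is exactly the degeneration separating adjacent classes.
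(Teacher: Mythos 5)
Your overall route is the paper's own: decompose $\mathcal{CG}_\rho$ into the classes $\mathcal{CG}_\rho([X],[Y])$ indexed by edges of $\Sigma$, reduce minimization within a class to the doubled Steiner trees at the two endpoint superbases, and extract finiteness from the compact attracting subtree of Proposition \ref{attractingtree4}. However, two of your steps have genuine gaps. The first is in the dictionary itself: you treat it as a routine verification that the doubled Steiner trees are "the candidate minimal realizations", but for an arbitrary geodesic carrier graph in the class there is no a priori reason that its length is bounded below by twice a Steiner length, because nothing forces it to be $r_Z$-symmetric. This is exactly the first lemma of the paper's proof: if the fundamental domain $D$ satisfies $r_Z(D)\neq D$, one joins the distinguished boundary points and vertices of $D$ to the $r_Z$-images of their partners, takes midpoints of these six segments, and the convexity lemma yields a strictly shorter carrier graph in the same class; only after this symmetrization does the comparison with Steiner trees make sense. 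That reduction is a main ingredient of the proof, not part of a length bookkeeping for the doubles.

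The second gap is more serious and lies in the step you yourself flagged as the main obstacle. Outside the compact attracting subtree the Steiner trees are of type $(2)$ of Lemma \ref{123} (one valence-$2$ vertex on an axis, two valence-$1$ vertices), as established in the proof of Proposition \ref{attractingtree4}; they are \emph{not} of type (b). The distinction is decisive: a type (b) tree doubles to a Dumbbell, which is a legitimate trivalent carrier graph, so if the Steiner tree at an edge outside the subtree were of type (b), the infimum of that class would be \emph{attained} and you would produce a critical carrier graph there -- the opposite of what your argument needs. The actual mechanism for non-attainment is that a type $(2)$ tree doubles to a wedge of two circles with a valence-$4$ vertex, which is excluded from $\mathcal{CG}_\rho$; trivalent graphs in the class approach twice its Steiner length but never realize it, the degeneration being the collapse of the middle edge and collision of the two trivalent vertices, not a segment "sliding into the adjacent class". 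Correspondingly, existence of critical graphs comes from the sinks of the $L_\rho$-orientation: at a sink the Steiner tree cannot be of type $(2)$, since otherwise the neighboring triple $(\gamma_X,r_Z(\gamma_Y),\gamma_Z)$ would admit a strictly shorter Steiner tree, contradicting that the vertex is a sink. Finally, a minor correction: distinct critical carrier graphs need not lie in distinct classes (Remark \ref{twosteinertree} and the examples of Section 9 give up to two in a single class), so the count should be "finitely many classes can carry critical graphs, each contributing boundedly many", rather than an injection of critical graphs into classes.
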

As a corollary, we have
\begin{maincor}
	If $\rho:\FF_2\rightarrow\SLtwoC$ satisfies the BQ-conditions, then it admits finitely many minimal carrier $2$-graphs.
\end{maincor}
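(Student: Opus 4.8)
The plan is to convert the constrained minimization of carrier-graph length inside each class $\mathcal{CG}_\rho([X],[Y])$ into a Steiner-length problem for a triple of $\pi$-rotation axes, and then to use the compact attracting subtrees built in Sections 4 and 6 to confine the classes that can possibly carry a minimizer to a finite part of $\Sigma$. First I would make the dictionary with the doubling construction of Section 6.3 precise. Completing $(X,Y)$ to a basic triple $(X,Y,Z)$ gives a vertex of $\Sigma$ and, by Proposition \ref{irre1}, a triple of $\pi$-rotations with axes $(\gamma_X,\gamma_Y,\gamma_Z)$; a marked geodesic trivalent carrier $2$-graph of this class is, up to the equivalence defining $\mathcal{CG}_\rho$, the quotient of the double of a finite graph $G$ connecting $(\gamma_X,\gamma_Y,\gamma_Z)$, and its length equals $2\mathsf{L}(G)$. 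Hence $\inf_{\mathcal{CG}_\rho([X],[Y])}L=2\,\mathsf{L}(G_{\min})$, and by the existence-and-characterization result for Steiner trees the only candidates for $G_{\min}$ are the Fermat tree $G_p$ (type (a)) and the type-(b) trees whose middle edge lies on one of the $\gamma_i$.

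Second, I would decide exactly when this infimum is realized by a genuine trivalent carrier $2$-graph of the class. By the Convexity Lemma the length is convex along geodesic variations of the graph's vertices, so at an interior minimizer the three segments at each trivalent vertex must balance, meeting pairwise at angle $2\pi/3$; this is precisely the $2\pi/3$-acute situation of Theorems \ref{Fermat} and \ref{fpfortriple}, where a balanced point exists. If instead the associated triple is $2\pi/3$-obtuse, the Fermat point sits on some $\gamma_i$, the optimal $G$ is of type (b), and in the doubled picture an edge of the carrier graph collapses (its length tends to $0$). In this case I would argue that the infimum is not attained by a trivalent graph of the class: the degenerate limit is the configuration shared with a neighbouring class, across which the length strictly decreases, exactly as recorded by the edge orientation. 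Thus a class supports a critical carrier $2$-graph only when its triple is $2\pi/3$-acute.

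Third, I would invoke the attracting subtrees to bound the acute classes. Proposition \ref{attractingsubtree3} shows that far out in $\Sigma$ the complex translation distances of the $\pi$-rotations tend to $(i\pi,i\pi,0)$, so the internal angle at the outermost geodesic tends to $\pi$ and every sufficiently distant triple is $2\pi/3$-obtuse; more structurally, Corollary \ref{3orientation} and Proposition \ref{attractingtree4} furnish compact attracting subtrees for the angle orientation $A_\rho$ and the Steiner-length orientation $L_\rho$, and every edge outside them is oriented inward, which is just the statement that the class-restricted minimum fails to be attained there. Consequently only the finitely many edges meeting these compact subtrees can support a critical carrier $2$-graph, and each such class has a unique Fermat tree and at most two Steiner trees by Remark \ref{twosteinertree}, giving finiteness. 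For existence I would use Proposition \ref{attractingsubtree2}: the real translation lengths, hence the carrier-graph lengths, grow without bound as one leaves any compact part of $\Sigma$, so $L$ is proper on $\mathcal{CG}_\rho$, attains a global minimum, and that minimal carrier $2$-graph is in particular critical.

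The hardest step is the second one, namely establishing the dictionary of the first paragraph and then controlling the boundary behaviour: showing that $2\pi/3$-obtuseness forces the class-restricted infimum to be \emph{unattained} rather than realized by a type-(b) double of the same class, and that the collapsing configuration is the one certified by the inward edge orientation to lie in a strictly shorter neighbouring class. This demands a careful account of how the combinatorial type (Buckle versus Dumbbell) and the marking of the doubled graph transform as an edge degenerates, together with the subtleties about collinear edges on a $\gamma_i$ flagged in the remarks of Section 6.3.
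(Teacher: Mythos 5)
First, a structural remark: in the paper this corollary is a one-line consequence of Theorem \ref{main} (a minimal carrier $2$-graph is in particular critical, and critical ones exist and are finite in number), so what you are really reconstructing is the proof of Theorem \ref{main} itself; your overall route (symmetrize, reduce each class to a Steiner-length problem for the rotation axes, then invoke the attracting subtrees) is the paper's. The genuine gap is in your second step. You assert that a class $\mathcal{CG}_\rho([X],[Y])$ attains its infimum only when the associated triple is $2\pi/3$-acute, and that in the $2\pi/3$-obtuse case the optimal $G$ is of type (b) and ``an edge of the carrier graph collapses,'' so the infimum is unattained and one passes to a neighbouring class. This is false, and it is exactly the point the Steiner length was introduced to handle. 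A type-(b) Steiner tree has a middle segment of \emph{positive} length lying along some $\gamma_i$; its double under $r_i$ is a genuine trivalent carrier graph of dumbbell type, with two trivalent vertices on $\gamma_i$ at which all three angles equal $2\pi/3$ (the two non-axial edges at each such vertex are mirror images of one another across $\gamma_i$), and this graph \emph{attains} the class infimum, i.e.\ is critical. Such configurations occur for $2\pi/3$-obtuse triples as well as for some acute ones (Remark \ref{twosteinertree}), and by Section 9 they are not exotic: for three-holed spheres, once-punctured M\"obius bands and once-punctured Klein bottles the \emph{minimal} carrier $2$-graphs are doubles of type-(b) trees. For such $\rho$ there may be no $2\pi/3$-acute class at all, so your classification would leave the global minimum --- whose existence your properness argument asserts --- with nowhere to be attained: your proof contradicts itself, and both your existence claim and your finiteness claim (the latter because your enumeration of candidates omits the dumbbell minimizers) break down. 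The correct dichotomy, used in the paper's ``sink'' lemma, is between non-degenerate Steiner trees (type (1) or type (3) of Lemma \ref{123}, both of which double to genuine critical graphs) and the degenerate type-(2) path through a Fermat point lying on some $\gamma_i$: only the latter forces collapse, non-attainment, and a strictly shorter neighbouring class.

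Two smaller points. Your dictionary claims every geodesic carrier graph of the class is, up to equivalence, the quotient of a double; that is not true, and the equality $\inf_{\mathcal{CG}_\rho([X],[Y])} L = 2\mathsf{L}(G_{\min})$ needs the symmetrization step (the paper's first lemma of Section 8: if $r_Z(D)\neq D$, the convexity lemma yields a strictly shorter graph in the same class), after which one may restrict to $r_Z$-symmetric graphs. Also, a class corresponds to an \emph{edge} of $\Sigma$, not a vertex: since $[Y]=[Y^{-1}]$, both triples $(\gamma_X,\gamma_Y,\gamma_Z)$ and $(r_Z(\gamma_X),\gamma_Y,\gamma_Z)$ compete, and the class infimum is twice the smaller of the two Steiner lengths. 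Finally, your exclusion of far-away classes is correct but for the paper's reason, not yours: what the proof of Proposition \ref{attractingtree4} establishes far from the attracting subtree is not merely obtuseness but that the Steiner tree degenerates to type (2), and it is this degeneracy that rules out critical graphs there.
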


\section{Proof of the main result}
Let $\rho$ be a BQ-representation. Let $(X,Y)$ be a basis of $\FF_2$. We first consider the carrier $2$-graphs in $\mathcal{CG}_\rho([X],[Y])$. To simplify the notation, we will also use $D$, $p_1$, $p_1'$, $p_2$, $p_2'$, $v_1$ and $v_2$ to denote their $\Psi$-images. We consider the two ordered triple $(X,Y,Z)$ and $(X,Y^{-1},Z')$.

Consider the triple $\SLtwoC$-elements $(\rho(X),\rho(Y),\rho(Z))$. By Proposition \ref{irre1}, we can find a triple of $\pi$-rotations $(r_X,r_Y,r_Z)$ with axes $(\gamma_X,\gamma_Y,\gamma_Z)$ in general position, such that:
\begin{eqnarray*}
	\rho(X)&=&r_Yr_Z,\\
	\rho(Y)&=&r_Zr_X,\\
	\rho(Z)&=&-r_Xr_Y,\\
	\rho(Z')&=&(r_Zr_Xr_Z)r_Y.
\end{eqnarray*}
We first prove the following lemma:
\begin{lem}
	If $r_Z(D)\neq D$, then there is a new carrier graph $(\Gamma',\Psi')$ with its length smaller than that of $(\Gamma,\Psi)$.
\end{lem}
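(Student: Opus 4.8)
The plan is to show that the involution $r_Z$ realizes the elliptic involution of $\FF_2$ as an isometry of $\HHH$, and then to shorten any non-symmetric $D$ by an equivariant convex interpolation between $D$ and $r_Z(D)$, the Convexity Lemma providing the strict decrease. First I would record the underlying algebraic identity. Working in $\PSLtwoC$, where $r_Z^{-1}=r_Z$, and using $\rho(X)=r_Yr_Z$ and $\rho(Y)=r_Zr_X$, a one-line computation gives
\[
r_Z\rho(X)r_Z^{-1}=r_Zr_Y=\rho(X)^{-1},\qquad r_Z\rho(Y)r_Z^{-1}=r_Xr_Z=\rho(Y)^{-1}.
\]
Hence conjugation by $r_Z$ is exactly the elliptic involution $\mathfrak{e}_{(X,Y)}$ on $\rho(\FF_2)$; in particular $r_Z$ normalizes $\rho(\FF_2)$. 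Consequently $r_Z\circ\Psi$ is again $\rho$-equivariant, with $h_{r_Z\Psi}=\mathfrak{e}\circ h_\Psi$, so $(\Gamma,r_Z\Psi)$ is a carrier $2$-graph; it is equivalent to $(\Gamma,\Psi)$ because $\mathfrak{e}\in\Inn^{\mathfrak{e}}(\FF_2)$, and $L(\Gamma,r_Z\Psi)=L(\Gamma,\Psi)$ since $r_Z$ is an isometry.

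Next I would repair the marking so that the two maps can be interpolated. Let $\iota\colon\Gamma\to\Gamma$ be a homeomorphism inducing $\mathfrak{e}$ on $\pi_1(\Gamma)=\FF_2$, and let $\widetilde{\iota}$ be a lift, chosen (after adjusting by a deck transformation to absorb the inner ambiguity coming from the centrality of $\mathfrak{e}$ in $\Out(\FF_2)$) so that $\Phi:=r_Z\circ\Psi\circ\widetilde{\iota}$ satisfies $h_\Phi=h_\Psi$ \emph{exactly}. Then $\Psi$ and $\Phi$ are both $\rho$-equivariant with respect to the same automorphism $h_\Psi$, they have the same length, and $\mathrm{Im}(\Phi)=r_Z(\mathrm{Im}(\Psi))$; thus $\Phi\neq\Psi$ as maps precisely because $r_Z(D)\neq D$. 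For $t\in[0,1]$ define $\Psi_t$ by sending each vertex of $\widetilde{\Gamma}$ to the point dividing the geodesic from its $\Psi$-image to its $\Phi$-image in ratio $t$, sending edges to the geodesic segments joining the resulting endpoints, and extending $\rho$-equivariantly. Since $\Psi$ and $\Phi$ share the automorphism $h_\Psi$ and $\rho$ acts by isometries (which carry the interpolating geodesics to interpolating geodesics), each $\Psi_t$ is $\rho$-equivariant, so $(\Gamma,\Psi_t)$ is a geodesic carrier $2$-graph in $\mathcal{CG}_\rho([X],[Y])$ with $\Psi_0=\Psi$ and $\Psi_1=\Phi$.

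The Convexity Lemma, applied edge by edge to the endpoints being moved, shows that $t\mapsto L(\Gamma,\Psi_t)$ is convex. Since $L(\Gamma,\Psi_0)=L(\Gamma,\Psi_1)=L(\Gamma,\Psi)$, convexity forces $L(\Gamma,\Psi_{1/2})\le L(\Gamma,\Psi)$, with equality only if $L$ is affine, hence constant, on $[0,1]$. Setting $(\Gamma',\Psi'):=(\Gamma,\Psi_{1/2})$ (with edges straightened to geodesics, which can only shorten and preserves equivariance) then yields the claimed shorter carrier graph, provided the inequality is strict.

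The main obstacle is precisely this strictness. I would argue that constant length is impossible once $\Phi\neq\Psi$: by the equality clause of the Convexity Lemma, $L(\Gamma,\Psi_t)$ being constant would require, for every edge, the four relevant endpoints to lie on a single geodesic with the interpolation sliding rigidly along it. But the edges of $D$ terminate on the three pairwise disjoint axes $\gamma_X,\gamma_Y,\gamma_Z$, which by hypothesis are in general position (no common perpendicular, no shared endpoints), so such a simultaneous collinear degeneration cannot occur. Hence at least one edge length is strictly convex, giving $L(\Gamma,\Psi_{1/2})<L(\Gamma,\Psi)$ and completing the proof. The only care needed is bookkeeping: verifying that the vertex $p_i$ normalization (midpoints of the $X$- and $Y$-edges) is respected by $\Psi_{1/2}$ after straightening, which follows because the midpoint constraint is preserved under the equivariant interpolation.
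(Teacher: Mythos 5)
Your construction is, at bottom, the same as the paper's: the paper reflects the fundamental domain $D$ by $r_Z$, relabels it by the elliptic involution (this is why it pairs $p_1$ with $r_Z(p_1')$ rather than with $r_Z(p_1)$), and passes to the graph through the midpoints of the six connecting segments; your $\Psi_{1/2}$ is exactly that midpoint graph, and phrasing the estimate as convexity of $t\mapsto L(\Gamma,\Psi_t)$ is a clean way to organize it. The algebraic step ($r_Z \rho(X) r_Z^{-1}=\rho(X)^{-1}$, $r_Z\rho(Y)r_Z^{-1}=\rho(Y)^{-1}$) and the re-marking by $\widetilde{\iota}$ are correct. One side remark is wrong but harmless: $(\Gamma,r_Z\Psi)$ is \emph{not} equivalent to $(\Gamma,\Psi)$ in the paper's sense, since condition (2) of the definition requires the two images to differ by an element of $\rho(\FF_2)$, and $r_Z\notin\rho(\FF_2)$ in general; all your argument actually needs is that $(\Gamma,\Phi)$ lies in $\mathcal{CG}_\rho([X],[Y])$ with the same length, which is true.

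The genuine gap is in the strictness step, which you correctly single out as the main obstacle but then justify from a false premise. You assert that ``the edges of $D$ terminate on the three pairwise disjoint axes $\gamma_X,\gamma_Y,\gamma_Z$.'' They do not: a carrier $2$-graph is an arbitrary $\rho$-equivariant geodesic graph, and its vertices $p_1,p_1',p_2,p_2',v_1,v_2$ are arbitrary points of $\HHH$ with no incidence relation to the axes of the $\pi$-rotations $r_X,r_Y,r_Z$. (Incidence with those axes appears only in the complementary case $r_Z(D)=D$, where the fixed locus of $r_Z$ meets $D$ along $\gamma_Z$ and the quotient graph connects the three axes --- that is the \emph{next} step of the main proof, not a hypothesis available here.) So general position of $(\gamma_X,\gamma_Y,\gamma_Z)$ cannot by itself exclude the collinear degeneration, and what you have proved is only $L(\Gamma,\Psi_{1/2})\le L(\Gamma,\Psi)$. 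A correct strictness argument runs along these lines: if $L(\Gamma,\Psi_t)$ were constant, the equality clause of the Convexity Lemma would apply to every edge with an endpoint $v$ at which $\Psi(v)\neq\Phi(v)$ (such $v$ exists because $r_Z(D)\neq D$), forcing all edges at $v$, in both images, onto the geodesic $g_v$ through $\Psi(v)$ and $\Phi(v)$; propagating through the set of such vertices using connectivity and equivariance ($g_{W\cdot v}=\rho(h_\Psi(W))\,g_v$), one either concludes that $\rho(h_\Psi(X_0))$ and $\rho(h_\Psi(Y_0))$ preserve a common geodesic --- impossible for an irreducible representation, since a loxodromic element preserves only its own axis --- or one arrives at a vertex whose three adjacent edges are collinear, and such a graph can be strictly shortened directly by Lemma \ref{anglehyp2}. (In fairness, the paper's own proof also asserts the strict inequality without elaboration; but your proposal explicitly rests it on the incorrect incidence claim, which must be repaired.)
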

\begin{proof}
	Assume that $r_Z(D)\neq D$. We first consider the case where $\Gamma$ is a buckle. We consider the geodesic segments $p_1r_Z(p_1')$, $p_1'r_Z(p_1)$, $p_2r_Z(p_2')$, $p_2'r_Z(p_2)$, $v_1r_Z(v_2)$ and $v_2r_Z(v_1)$. Below is a picture for the planar case.
	
	\begin{center}
		\includegraphics[scale=0.8]{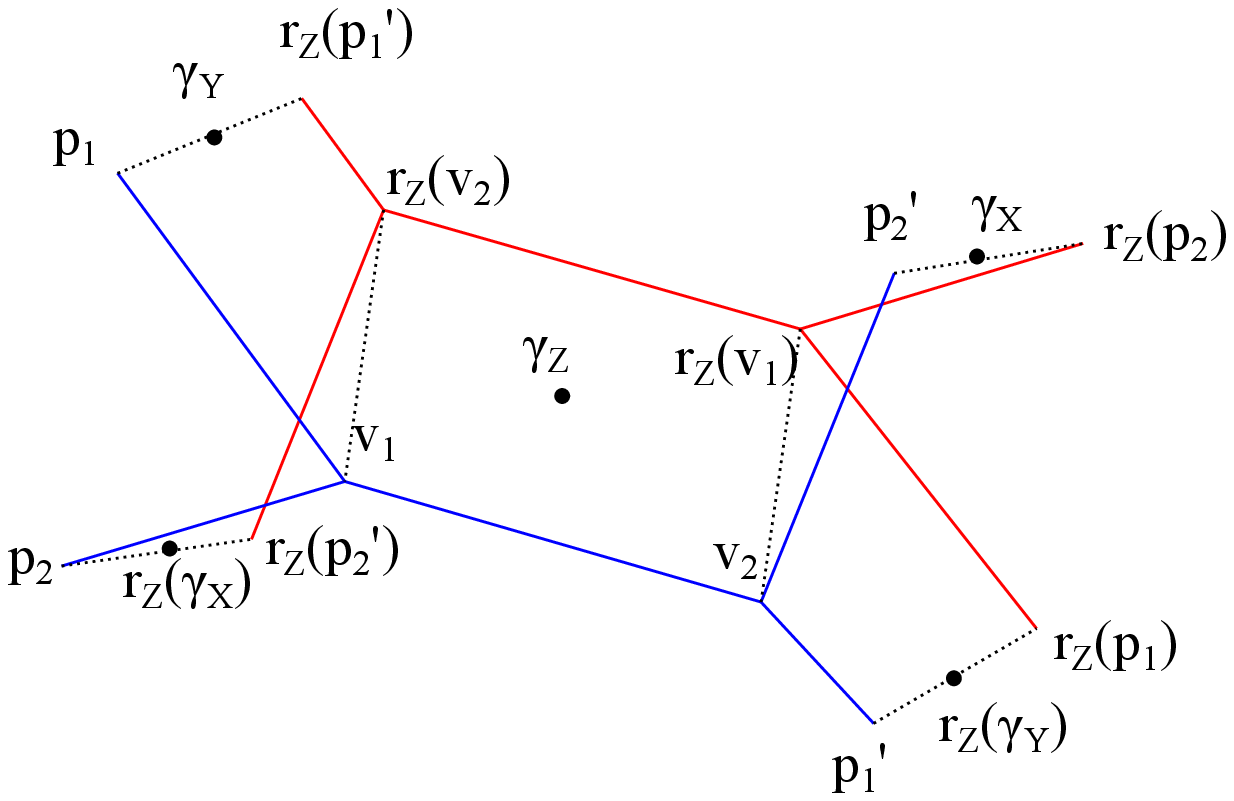}
		\captionof{figure}{}
	\end{center}	

	Then we can find a new graph $D'$ homotopic to $D$ relative to these $6$ geodesic segments, such that its vertices are the mid points of the six geodesic segments respectively. The hyperbolic convexity lemma implies that the length of $D'$ is smaller than that of $D$. Therefore the new carrier graph $(\Gamma',\Psi')$ induced by $D'$ has smaller length than $(\Gamma,\Psi)$.
		
	If $\Gamma$ is a dumbbell, then we consider the geodesic segments $v_1r_Z(v_1)$ and $v_2r_Z(v_2)$ instead of $v_1r_Z(v_2)$ and $v_2r_Z(v_1)$. The rest of the proof is the same as the first case.
\end{proof}
Now assume that $r_Z(D)=D$. Then the graph $D$ can be viewed as branched $2$-cover of a graph $G$ in $\HHH$ connecting $(\gamma_X,\gamma_Y,\gamma_Z)$ or $(r_z(\gamma_X),\gamma_Y,\gamma_Z)$ with covering map induced by $r_Z$, such that the branching part is either $v_1v_2$ or the midpoint of $v_1v_2$. 

We consider $G(X,Y,Z)$ (resp. $G(X,Y^{-1},Z')$) the Steiner tree for $(\gamma_X,\gamma_Y,\gamma_Z)$ (resp. $(r_z(\gamma_X),\gamma_Y,\gamma_Z)$) and denote by $D(X,Y,Z)$ (resp. $D(X,Y^{-1},Z')$) its double. By the definition of the Steiner tree, we can see that if $(\Gamma,\Psi)$ is a marked carrier $2$-graph with $(h_{\Psi}(X_0),h_{\Psi}(Y_0))=(X,Y)$ or $(X,Y^{-1})$, then its length is strictly longer than that of $D(X,Y,Z)$ or that of $D(X,Y^{-1},Z')$. Therefore, up to isometry, the carrier graph realizing the infimum of the length among all carrier $2$-graph in $\mathcal{CG}_\rho([X],[Y])$ is induced by the shorter one of $D(X,Y,Z)$ and $D(X,Y^{-1},Z')$. Therefore to look for the minimal graph, it is enough to consider those induced by the Steiner trees associated to superbases. 

Given an edge orientation on the tree of superbasis, we call a vertex is a sink if all three adjacent edges are oriented towards it. By the proposition \ref{attractingtree4}, we know that there exists a compact attracting subtree $\Sigma_0$ with respect to the edge orientation induced by $L_\rho$. This moreover implies that there are finitely many sinks.
\begin{lem}
	If a vertex is a sink, then the corresponding Steiner tree induces a critical carrier graph.
\end{lem}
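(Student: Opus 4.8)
The plan is to read off criticality directly from the sink condition, using the comparison of Steiner lengths at adjacent vertices set up in the discussion immediately preceding the lemma. Let $v$ be a sink and write its superbasis as $(X,Y,Z)$. Recall that the three edges of $\Sigma$ incident to $v$ correspond to the three bases $(X,Y)$, $(Y,Z)$, $(Z,X)$ of $\FF_2$, hence to the three subspaces $\mathcal{CG}_\rho([X],[Y])$, $\mathcal{CG}_\rho([Y],[Z])$, $\mathcal{CG}_\rho([Z],[X])$, and that doubling the Steiner tree $G(v)$ by $r_Z$, $r_X$, $r_Y$ respectively sends it into these three subspaces. Since the edge orientation induced by $L_\rho$ points from the larger Steiner length to the smaller, the sink condition says precisely that $L_\rho(v)\le L_\rho(v')$ for each of the three neighbours $v'$ of $v$. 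So the whole task reduces to exhibiting one incident edge for which the doubled Steiner tree at $v$ realizes the minimum of the length function on the attached subspace.

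First I would fix the involution $r_i$ by which to double $G(v)$ so that the doubled graph has length exactly $2\mathsf{L}(G(v))=2L_\rho(v)$. If $G(v)$ is of type (b) with its middle segment on $\gamma_i$, I double by $r_i$: the on-geodesic part then lies on the pointwise-fixed axis and is counted once, matching the $\tfrac12$-weight in the definition of $\mathsf{L}$. If $G(v)$ is of type (a), i.e. the Fermat tree, any of the three involutions works, since its on-geodesic part has zero length. With this choice the double $D(v)$ lies in the subspace $\mathcal{CG}_\rho([\,\cdot\,],[\,\cdot\,])$ attached to the corresponding incident edge $e$, and coincides with the doubled Steiner tree appearing in the computation before the lemma. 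That computation shows the minimum of the length on this subspace is attained and equals the smaller of the lengths of the two doubled Steiner trees at the two endpoints of $e$, namely $2\min\bigl(L_\rho(v),L_\rho(v')\bigr)$ with $v'$ the other endpoint of $e$. Because $v$ is a sink, $L_\rho(v)\le L_\rho(v')$, so this minimum is $2L_\rho(v)$ and is realized by $D(v)$. Hence $D(v)$ is critical.

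The hard part will be the bookkeeping of the previous paragraph: one must match the doubling involution to the combinatorial type of $G(v)$ so that doubling genuinely costs $2\mathsf{L}(G(v))$ and no more. Doubling a type-(b) tree with middle segment on $\gamma_X$ by the wrong involution $r_Z$ would duplicate that segment rather than fix it, giving a length strictly larger than $2L_\rho(v)$; choosing $r_X$ avoids this and is exactly what makes $D(v)$ coincide with the minimizer of the preceding discussion. The degenerate configurations (collinear edges orthogonal to the fixed axis meeting at a point of it) are already dealt with by the remark following the definition of the Steiner length, which permits replacing $G(v)$ by a graph having the same double, so no genuinely new difficulty arises. Ties $L_\rho(v)=L_\rho(v')$ are harmless: $D(v)$ still attains the minimum, merely non-uniquely, and so remains critical. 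Together with the finiteness of sinks supplied by Proposition~\ref{attractingtree4}, this lemma is the step that feeds into the proof of the main theorem.
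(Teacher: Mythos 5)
Your argument breaks down at its central step: the assertion that ``that computation shows the minimum of the length on this subspace is attained \dots namely $2\min\bigl(L_\rho(v),L_\rho(v')\bigr)$.'' The discussion preceding the lemma only identifies this quantity as the \emph{infimum} of the length function on $\mathcal{CG}_\rho([X],[Y])$; whether the infimum is \emph{realized} by an honest carrier $2$-graph is exactly what is at stake, and it can fail. The problematic configuration is the degenerate Steiner tree consisting of a single vertex $v$ lying on one of the three axes, say $\gamma_Z$, together with the two segments $vv_X$ and $vv_Y$ (the Fermat point sitting on a geodesic, so that types (a) and (b) coincide with a zero-length middle segment). Your case split (``type (b) with middle segment on $\gamma_i$\dots type (a), i.e.\ the Fermat tree'') silently skips this borderline case, and there no choice of doubling involution rescues you: the induced quotient graph has a valence-four vertex, i.e.\ it is a figure-eight rather than a Buckle or a Dumbbell, hence it is not a carrier $2$-graph in the paper's (trivalent, positive edge length) sense, and the infimum $2L_\rho(v)$ is approached but not attained on the subspace. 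This case is not exotic: the paper's own finiteness argument rests on the fact that \emph{every} vertex outside the compact attracting subtree has a Steiner tree of exactly this degenerate form, so such vertices induce no critical carrier graph at all. The degeneracies you do address via the remark after the definition of Steiner length (collinear edges orthogonal to the fixed axis) are an unrelated issue of length bookkeeping, not of trivalence.

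Consequently the entire content of the lemma --- and essentially the whole of the paper's proof --- is the exclusion of this degenerate configuration at a sink, and your proposal contains no counterpart of that argument. The paper argues by contradiction: if the Steiner tree at the sink $(X,Y,Z)$ were a single vertex $v\in\gamma_Z$ with edges $vv_X$ and $vv_Y$, then $\angle v_X v v_Y\ge 2\pi/3$ while each edge meets $\gamma_Z$ at angle at least $\pi/3$; reflecting by $r_Z$ then forces $\angle\bigl(vv_X,\,v\,r_Z(v_Y)\bigr)<2\pi/3$, so the flipped graph $vv_X\cup v\,r_Z(v_Y)$ violates the characterization of Steiner trees for the neighbouring triple $(\gamma_X,r_Z(\gamma_Y),\gamma_Z)$; hence that neighbour has Steiner length strictly less than $L_\rho(v)$, contradicting the sink condition. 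Only after this exclusion does the comparison you set up (sink gives $L_\rho(v)\le L_\rho(v')$, and the double of a \emph{non-degenerate} $G(v)$ is a trivalent graph of length $2L_\rho(v)$) deliver criticality. A further, more minor, inaccuracy: which subspace the doubled tree lives in is governed by the choice of marking, not by which $\pi$-rotation you double by --- doubling $G(v)$ by $r_X$, $r_Y$ or $r_Z$ yields fundamental domains for the same quotient graph --- so the involution-matching you single out as ``the hard part'' is not where the difficulty lies.
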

\begin{proof}
	It is enough to show that if the Steiner tree does not induces a critical carrier graph, the Steiner tree of a neighbor vertex has strictly shorter length.
	
	Consider the superbasis $(X,Y,Z)$. If the corresponding carrier graph $(\Gamma,\Psi)$ is not critical, it has a valence $4$ vertex. Then the Steiner tree for $(\gamma_X,\gamma_Y,\gamma_Z)$ consists of one vertex and two edges. Assume that the vertex is on the geodesic $\gamma_Z$. We denote by $v$ this vertex and $v_X$ and $v_Y$ its projections to $\gamma_X$ and $\gamma_Y$ respectively. By our previous discussion, the angle between $vv_X$ and $vv_Y$ is bigger or equal to $2\pi/3$, and the angle between each edge and the $\gamma_Z$ is bigger or equals to $\pi/3$. This implies that the angle between $vv_X$ and $vr_Z(v_Y)$ is smaller than $2\pi/3$. Hence it is not the Steiner tree for the triple $(\gamma_X,r_Z(\gamma_Y),\gamma_Z)$. Therefore the new triple of geodesics admits a shorter Steiner tree, which contradicts to the facts that $(X,Y,Z)$ is a sink.
\end{proof}
This shows the existence of critical carrier graphs. The finiteness is given by the fact that the Steiner graph associated to a vertex outside the compact attracting subtree consists of one vertex and two edges. Therefore we prove the main theorem.

The corollary follows from the fact that a minimal carrier graph is also a critical carrier graph.

\section{Example}
Consider $\HH$ as a totally geodesic submanifold in $\HHH$. In this section we will consider the BQ-representations which preserve $\HH$ in $\HHH$. They all come from the hyperbolic structure on the surfaces with Euler characteristic $-1$. Based on the topological type of the corresponding surface, these representations can be classified into the following $4$ types:
\begin{enumerate}
	\item once-punctured torus;
	\item three-punctured sphere;
	\item once-punctured M\"obius band;
	\item once-punctured Klein bottle.
\end{enumerate}
For more details, one may read \cite{goldman2009ems} and \cite{goldmanmcshanestantchevtan2016}.

A minimal carrier $2$-graph for these BQ-representations is induced by a superbasis $(X,Y,Z)$ of which the triple of geodesics $(\gamma_X,\gamma_Y,\gamma_Z)$ are of special configuration, and the number of minimal carrier $2$-graph is at most $2$. We will give the description of these configuration case by case.

We recall three types of a Steiner tree for a triple of geodesics:
\begin{enumerate}
	\item One valence $3$ vertex and three valence $1$ vertices;
	\item One valence $2$ vertex and two valence $1$ vertices;
	\item Two valence $2$ vertices and two valence $1$ vertices.
\end{enumerate}

\subsection{Once-punctured torus}
Let $\rho$ be an irreducible representation. Given a superbasis $(X,Y,Z)$, we assume that the three geodesics $\gamma_X$ , $\gamma_Y$ and $\gamma_Z$ associated to $(\rho(X),\rho(Y),\rho(Z))$ are all orthogonal to $\HH$. Such a representation is a BQ representation if and only if it corresponds to a hyperbolic structure on once-punctured torus. If the boundary holonomy is hyperbolic, the hyperbolic structure is complete and of infinite volume; if the boundary holonomy is parabolic, the hyperbolic structure is complete and of finite volume; if the boundary holonomy is elliptic, then the hyperbolic structure is not complete and the boundary becomes a conic singularity. In the first two cases, the representation is discrete and faithful, while in the last case, it is not discrete and possibly not faithful, either.

For each superbasis, the corresponding right angled hexagon is a triangle and the corresponding Steiner tree is either of type (1) or of type (2). Therefore the Steiner tree is the same as the Fermat tree in this case. We start with the triangle $\Delta v_1v_2v_3$ for one superbasis. To get the triangle for one neighbor superbasis, we consider the involution of $\HH$ with respect to $v_j$ for some $j$, and consider its action on a different vertex $v_{j+1}$ to get a new point. The new triangle is determined by $v_j$, $r_j(v_{j+1})$ and $v_{j-1}$. It is clear that the two triangle have same lengths on two sides adjacent to $v_j$ and the internal angles at $v_j$ are complementary to each other.
\medskip
\begin{center}
	\includegraphics[scale=0.5]{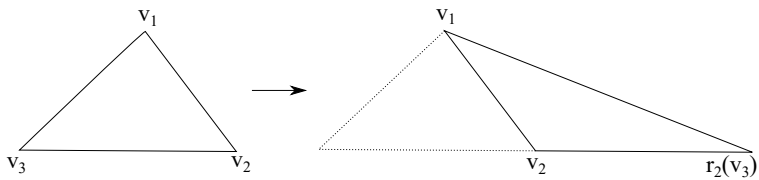}
	\captionof{figure}{Changing superbases}
\end{center}

In this way, we can obtain one triangle for each superbases inductively. We can verify the following two facts:
\begin{enumerate}[a.]
	\item there are finitely many superbases of which the associated triangles are $2\pi/3$-acute;
	\item if there is a basis $(X,Y)$ of $\FF_2$, such that the axes of $\rho(X)$ and $\rho(Y)$ are orthogonal, then there are only two superbases $(X,Y,Z)$ and $(X,Y^{-1},Z')$ of which the associated triangles are non obtuse in the usual sense, and are right angled triangles; otherwise there is a unique superbasis of which the associated triangle is non obtuse and is acute.
\end{enumerate} 

The Fermat trees for those $2\pi/3$-acute triangles induce critical carrier $2$-graphs, while the Fermat tree for the acute triangle or the right-angled triangle induces the minimal carrier $2$-graph.

\subsection{three-holed sphere}
Let $\rho$ be an irreducible representation. Given a superbasis $(X,Y,Z)$, we assume that the three geodesics $\gamma_X$ , $\gamma_Y$ and $\gamma_Z$ associated to $(\rho(X),\rho(Y),\rho(Z))$ are contained in $\HH$ and disjoint from one another. Such a representation is a BQ representation if and only if it corresponds to a hyperbolic structure on thrice-punctured sphere. Since the three peripheral elements are all primitive elements, they must be sent to hyperbolic elements in $\SLtwoR$ by $\rho$. Therefore, the hyperbolic structure on the three punctured sphere is complete and with infinite volume. The representation is always discrete and faithful.

The triple of geodesics $(\gamma_X,\gamma_Y,\gamma_Z)$ has one of the following two configurations:
\begin{enumerate}[a.]
	\item there exists one of the three geodesics separating the other two into two different half planes;
	\item no one separate the other two into two half planes.
\end{enumerate}
\begin{center}
	\includegraphics[scale=0.5]{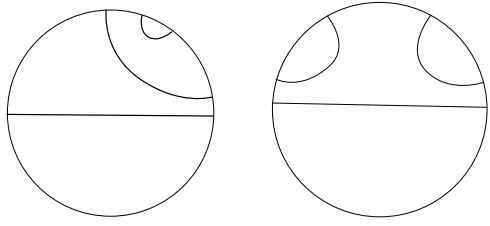}
	\captionof{figure}{Possible configurations}
\end{center}
There is a unique superbasis of which the associated triple of geodesics are of the second configuration. It admits
\begin{itemize}
	\item either a unique Steiner tree of type (1) or type (3);
	\item or two Steiner tree, such that one is of type (1), while the other is of type (3).
\end{itemize}
For the former case, the carrier $2$-graph induced by the Steiner tree is the unique minimal one, and for the latter case, the two carrier $2$-graph induced by the two Steiner trees are both minimal (see Remark \ref{twosteinertree}).

\subsection{Once-punctured M\"obius band}
The once-punctured M\"obius band and the once-punctured Kleinian bottle are both non-oriented surfaces. Their corresponding representations have been studied by Goldman-McShane-Stantchev-Tan in \cite{goldmanmcshanestantchevtan2016}.

Consider a BQ-representation $\rho$ corresponding to a hyperbolic structure on a once-punctured M\"obius band. Given a superbasis $(X,Y,Z)$, the three geodesics $\gamma_X$, $\gamma_Y$ and $\gamma_Z$ associated to $(\rho(X),\rho(Y),\rho(Z))$ have one orthogonal to $\HH$ and two others contained in $\HH$. Without loss of generality, we may assume that $\gamma_X$ is the one orthogonal to $\HH$ and we denote by $v_X$ its intersection point with $\HH$. Since the triple of geodesics are in general position, there are two possible configurations:
\begin{enumerate}[a.]
	\item one of $\gamma_Y$ and $\gamma_Z$ separates the other two into two half planes in $\HH$;
	\item none of $\gamma_Y$ and $\gamma_Z$ separate the other two.
\end{enumerate}
\begin{center}
	\includegraphics[scale=0.5]{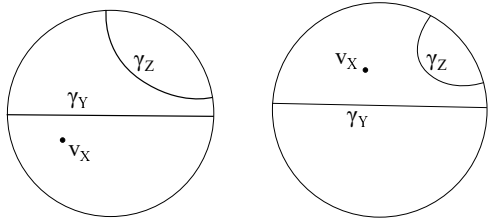}
	\captionof{figure}{Possible configurations}
\end{center}
Let $r_X$ denote the involution of with respect to $\gamma_X$. For a triple of second configuration, by applying $r_X$ on $\gamma_Y$ or $\gamma_Z$, the new triple is almost always of the first configuration. The exceptions are two triples associated to two superbases which are neighbors to each other. They are the special superbases among all superbases for the representation $\rho$. The two triples of geodesics associated to these two superbases are both of the second configuration and their internal angles at $v_X$ are complementary to each other.

If the two internal angles at $v_X$ are both right angles, then the Steiner tree associated to these two triples induce the only two minimal carrier $2$-graphs for $\rho$; if one is bigger than the other, then the Steiner tree for the triple with smaller internal angle at $v_X$ induces the unique minimal carrier $2$-graph of $\rho$. 
\begin{center}
	\includegraphics[scale=0.5]{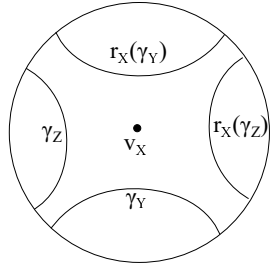}
	\captionof{figure}{Realizing minimal carrier graph}
\end{center}

\subsection{Once-punctured Klein bottle}
Consider a BQ-representation $\rho$ corresponding to a hyperbolic structure on a once-punctured Klein bottle. Given a superbasis $(X,Y,Z)$, the triple of geodesics $(\gamma_X,\gamma_Y,\gamma_Z)$ associated to $(\rho(X),\rho(Y),\rho(Z))$  has two orthogonal to $\HH$, and the last one is contained in $\HH$. Without loss of generality, we may assume that $\gamma_X$ is the one contained in $\HH$ and we denote by $v_Y$ and $v_Z$ the intersection points between $\gamma_Y$ and $\gamma_Z$ with $\HH$ respectively. Since the triple of geodesics are in general position, there are two possible configurations:
\begin{enumerate}[a.]
	\item the geodesic $\gamma_X$ separates $v_Y$ from $v_Z$ in $\HH$;
	\item the two point $v_Y$ and $v_Z$ are on the same side of $\gamma_X$ in $\HH$.
\end{enumerate}
\begin{center}
	\includegraphics[scale=0.5]{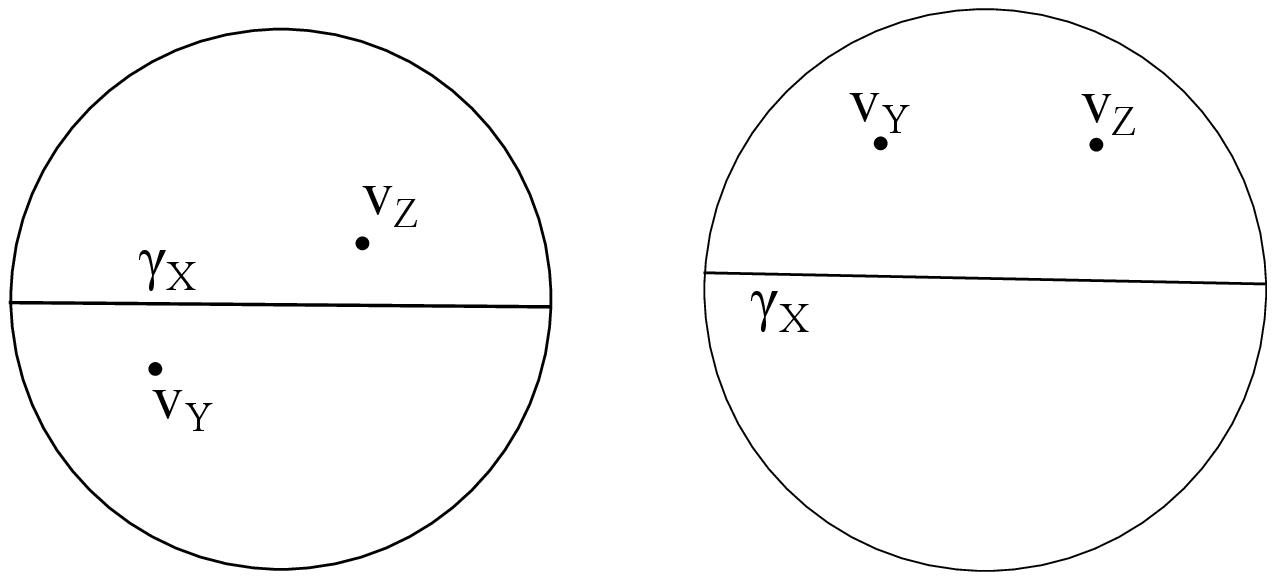}
	\captionof{figure}{Possible configurations}
\end{center}
The geodesic in $\HH$ passing $v_Y$ and $v_Z$ is the axis of $\rho(X)$. Denote it by $\delta_X$. The triple induces the minimal carrier $2$-graph is the one such that
\begin{itemize}
	\item the geodesic $\delta_X$ is disjoint from $\gamma_X$;
	\item the distance between $\delta_X$ and $\gamma_X$ is closest among all disjoint pairs $(\delta_{X'},\gamma_{X'})$'s;
	\item either the two point $v_Y$ and $v_Z$ are separated by the common perpendicular geodesic $\gamma$ between $\delta_X$ and $\gamma_X$, or one of them is on $\gamma$.
\end{itemize}
If $v_Y$ and $v_Z$ are separated by $\gamma$, then there is a unique minimal carrier $2$-graph; if one of the two points is on $\gamma$ and assume that it is $v_Y$, then there are two minimal carrier $2$-graph induced by $(\gamma_X,\gamma_Y,\gamma_Z)$ and $(\gamma_X,\gamma_Y,r_Y(\gamma_Z))$.
\begin{center}
	\includegraphics[scale=0.5]{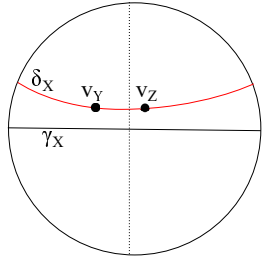}
	\captionof{figure}{Realizing minimal carrier graph}
\end{center}

\appendix
\section{Proof of the convexity lemma}
\begin{lem1}[Hyperbolic Convexity Lemma]
	Let $a$, $b$, $c$ and $d$ be four points in $n$-dimensional hyperbolic space $\mathbb{H}^n$. Let $e$ and $f$ be two points on $ab$ and $cd$ respectively, such that $|ae|/|ab|=|df|/|cd|=t$ for some $t\in(0,1)$. Then,
	\begin{equation}
	|ef|\le (1-t)|ad|+t|bc|.
	\end{equation}
	Moreover the equality is realized if and only if all four points lie on the same geodesic and $m$ is between $e$ and $f$.
\end{lem1}
\begin{proof}
	Let us first consider the triangle $\Delta abc$ with vertices $a$, $b$ and $c$. There is always a $2$ dimensional hyperbolic plane containing this triangle. Let $m$ be a point on $ac$ such that $|am|/|ac|=t$, where $0< t<1$. Then $|em|/|bc|\le t$. the proof is as follows.
	
	We will use the Poincar\'e disk model $\DD$ of $\HH$. By translations and rotations of $\DD$, we can assume that the vertex $a$ is at the origin and $b$ is point on the positive side of the real axis. Recall that the hyperbolic metric is given by the following formula under polar coordinates:
	\begin{equation}
	\mathrm{d}\,s(r,\theta)=\frac{2\sqrt{(\mathrm{d}\,r)^2+r^2(\mathrm{d}\,\theta)^2}}{(1-r^2)}.
	\end{equation}
	Then the point on the geodesic segment $bc$ has the coordinates $(r_1(\theta),\theta)$ with $\theta\in[0,\alpha]$ where $\alpha=\angle bac$. Let us assume first that $0<\alpha<\pi$. Then we consider the path $\gamma$ connecting points $e$ and $m$ which is parametrized by $(t\cdot r_1(\theta),\theta)$.		
	\begin{center}
		\includegraphics[scale=0.5]{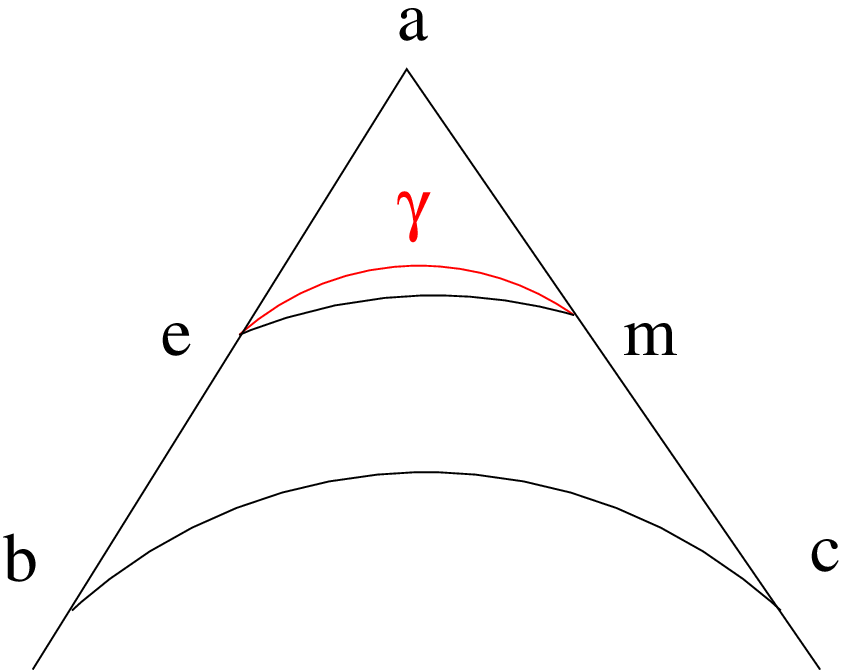}
		\captionof{figure}{}
	\end{center}
	
	The length of $\gamma$ can be given by the following two integrals:
	\begin{eqnarray*}
	|\gamma|&=&\int\limits_{0}^{\alpha}\frac{2t\sqrt{(\frac{\mathrm{d}\,r_1}{\mathrm{d}\,\theta})^2+r_1^2}\mathrm{d}\,\theta}{(1-t^2r_1^2)}\\
	&=&t\int\limits_{0}^{\alpha}\frac{2\sqrt{(\frac{\mathrm{d}\,r_1}{\mathrm{d}\,\theta})^2+r_1^2}\mathrm{d}\,\theta}{(1-t^2r_1^2)}\\
	&<&t\int\limits_{0}^{\alpha}\frac{2\sqrt{(\frac{\mathrm{d}\,r_1}{\mathrm{d}\,\theta})^2+r_1^2}\mathrm{d}\,\theta}{(1-r_1^2)}\\
	&=&t|bc|.
	\end{eqnarray*}
	Since $em$ is the geodesic connecting $e$ and $m$, its length is strictly shorter than that of $\gamma$. This shows that $|em|/|bc|< t$ when $0<\alpha<\pi$.
	
	When $\alpha=0$ or $\pi$, then we have the equality. Combining the two cases, we show that $|em|/|bc|\le t$. By applying the same discussion to the triangle $\Delta acd$, we can show that $|mf|/|ad|\le (1-t)$.
	\begin{center}
		\includegraphics[scale=0.5]{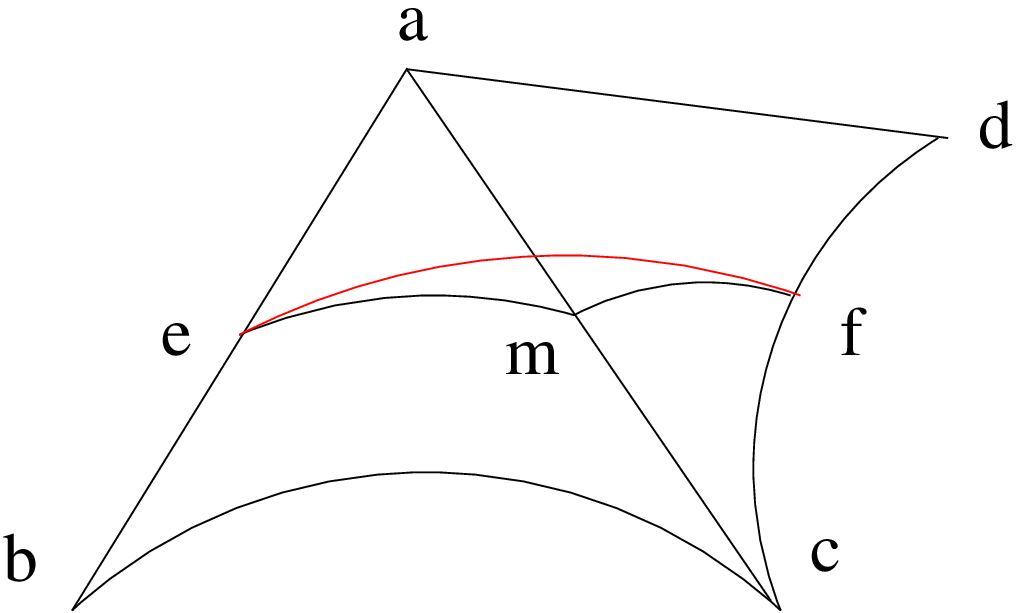}
		\captionof{figure}{}
	\end{center}
	
	By the triangular inequality, we can conclude the following inequality:
	\begin{equation}
	|ef|\le|em|+|fm|\le t|bc|+(1-t)|ad|
	\end{equation}
	
	The equality is realized if and only if
	\begin{eqnarray*}
		|em|&=&t|bc|\\
		|fm|&=&(1-t)|ad|\\
		|ef|&=&|em|+|fm|
	\end{eqnarray*}
	The first equality is realized if and only if $a$, $b$  and $c$ are collinear. The second equality is realized if and only if $a$, $c$ and $d$ are collinear. The third one is realized if and only if $e$, $m$ and $f$ are on the same geodesic and $m$ is between $e$ and $f$. This complete the proof of this lemma.
\end{proof}
\begin{rmk}
	To be more precise,  the equality $|ef|=t|bc|+(1-t)|ad|$ is realized if and only if $a$, $b$, $c$ and $d$ are collinear and in one of the following orders: $\overrightarrow{abdc}$, $\overrightarrow{adbc}$, $\overrightarrow{abcd}$, $\overrightarrow{badc}$, $\overrightarrow{bacd}$, $\overrightarrow{bcad}$, $\overrightarrow{dacb}$, $\overrightarrow{dcab}$, $\overrightarrow{cdab}$, $\overrightarrow{dcba}$, $\overrightarrow{cbda}$ and $\overrightarrow{cdba}$.
\end{rmk}

\end{document}